\title[ ]{On invariant tori in some reversible systems }
\thanks{2020 {\it Mathematics Subject Classification}. 37J40, 70H08.\\
{\it Key words and phrases}. Invariant tori, quasi-periodic, reversible system, degenerate.\\
This work is supported by the National Natural Science Foundation of China (No. 11771093 and No. 11790272).}
\author{Lu Chen}
\address[Lu Chen]{School of Mathematical Sciences,
Fudan University,
Shanghai 200433,
P. R. China} \email{chenlu@zju.edu.cn}
\theoremstyle{plain}
\newtheorem{thm}{Theorem}[section]
 \newtheorem{lem}[thm]{Lemma}
 \theoremstyle{definition}
 \newtheorem{defn}[thm]{Definition}
 \theoremstyle{remark}
 \newtheorem{rem}[thm]{Remark}
 \numberwithin{equation}{section}
\begin{document}

\begin{abstract}
In the present paper, we consider the following reversible system
\begin{equation*}
\begin{cases}
\dot{x}=\omega_0+f(x,y),\\
\dot{y}=g(x,y),
\end{cases}
\end{equation*}
where $x\in\mathbf{T}^{d}$, $y\backsim0\in \mathbf{R}^{d}$, $\omega_0$ is Diophantine, $f(x,y)=O(y)$, $g(x,y)=O(y^2)$ and $f$, $g$ are reversible with respect to the involution G: $(x,y)\mapsto(-x,y)$, that is, $f(-x,y)=f(x,y)$, $g(-x,y)=-g(x,y)$. We study the accumulation of an analytic invariant torus $\Gamma_0$ of the reversible system with Diophantine frequency $\omega_0$ by other invariant tori. We will prove that if the Birkhoff normal form around $\Gamma_0$ is 0-degenerate,  then $\Gamma_0$ is accumulated by other analytic invariant tori, the Lebesgue measure of the union of these tori being positive and the density of the union of these tori at $\Gamma_0$ being one. We will also prove that if the Birkhoff normal form around $\Gamma_0$ is $j$-degenerate ($1\leq j\leq d-1$) and condition (\ref{fb1-100}) is satisfied, then through $\Gamma_0$ there passes an analytic subvariety of dimension $d+j$ foliated into analytic invariant tori  with frequency vector $\omega_0$. If the Birkhoff normal form around $\Gamma_0$ is $d-1$-degenerate, we will prove a stronger result, that is, a full neighborhood of $\Gamma_0$ is foliated into analytic invariant tori  with frequency vectors proportional to $\omega_0$.
\end{abstract}

\maketitle
\section{ Introduction and main results}\label{sec1}
The ``conventional'' KAM theory deals with perturbations of (partially) integrable dynamical systems (first of all, Hamiltonian and reversible ones), and a typical theorem in this setup states that under appropriate conditions, perturbed systems admit many invariant tori carrying quasi-periodic motions and close to the unperturbed invariant tori. However, in recent years, another aspect of the KAM theory has been actively developed: suppose that we have a Hamiltonian system just possessing a Lagrangian quasi-periodic torus $\Gamma_0$ (or a Lagrangian invariant torus $\Gamma_0$ carrying resonant conditionally periodic motions), what then can be said about other Lagrangian quasi-periodic tori in a neighborhood of $\Gamma_0$?

Eliasson-Fayad-Krikorian \cite{a2} considered the Hamiltonian system
\begin{equation}\label{fc1-1}
H(x, y)=\langle\omega_0,y\rangle+P(x,y),\ \ P=O(y^2),
\end{equation}
where $x\in \mathbf{T}^{d}:=\mathbf{R}^{d}/\mathbf{(2\pi Z)}^{d}$,  $y\backsim0\in \mathbf{R}^{d}$, $P\in C^2$. The Hamiltonian system associated to $H$ is given by
\begin{equation}\label{fc1-2}
\begin{cases}
\dot{x}=\frac{\partial H}{\partial y}=\omega_0+O(y),\\
\dot{y}=-\frac{\partial H}{\partial x}=O(y^2).
\end{cases}
\end{equation}
Clearly the torus $\Gamma_0:=\mathbf{T}^{d}\times\left\{y=0 \right\}$ is invariant under the Hamiltonian flow and the induced dynamics is the translation
$$(t,x)\mapsto x_0+t\omega_0.$$
Moreover, this torus is Lagrangian with respect to the canonical symplectic form $dx\wedge dy$ on $\mathbf{T}^{d}\times\mathbf{R}^{d}$. The main results in \cite{a2} are the following:

Let $H$ be real analytic and let the frequency vector $\omega_0$ be Diophantine (so that there exists the Birkhoff normal form of the system around $\Gamma_0$). Then\\
(i) If the BNF of the system around $\Gamma_0$ is 0-degenerate, then the torus $\Gamma_0$ is KAM stable, i.e., $\Gamma_0$ is accumulated by other analytic Lagrangian quasi-periodic tori, the Lebesgue measure of the union of these tori being positive and the density of the union of these tori at $\Gamma_0$ being one.\\
(ii)  If the BNF of the system around $\Gamma_0$ is $j$-degenerate ($1\leq j\leq d-1$), then through
$\Gamma_0$ there passes an analytic coisotropic subvariety of dimension $d+j$ foliated into analytic Lagrangian quasi-periodic tori with frequency vector $\omega_0$.\\
(iii)   If the BNF of the system around $\Gamma_0$ is $d-1$-degenerate, then a full neighborhood of $\Gamma_0$ is foliated into analytic Lagrangian quasi-periodic tori with frequency vectors proportional to $\omega_0$, in particular, the torus $\Gamma_0$ is KAM stable (also see R\"ussmann \cite{b1}).

The results in \cite{a2} also provided a partial answer to conjectured by M.Herman in his ICM-98's-lecture \cite{a1}(i.e., {\it assuming that $\omega_0$ is Diophantine and $H$ is real analytic, in any sufficiently small neighborhood of the origin there exists a set of positive Lebesgue measure of Lagrangian quasi-periodic tori}).

Bounemoura \cite{b2} considered the following Hamiltonian system
\begin{equation}\label{fd1-1}
H(x, y)=\omega_0\cdot y+A(x)y\cdot y+R(x,y),\ \ (x,y)\in \mathbf{T}^{d}\times \mathbf{R}^{d},
\end{equation}
where $\cdot$ denotes the Euclidean inner product, $\omega_0\in \mathbf{R}^{d}$ is a non-resonant vector ($k\cdot \omega_0\neq 0$ for any $k\in\mathbf{Z}^{d}/ \left\{0 \right\}$), $A(x)$ is, for each $x\in \mathbf{T}^{d}$, a square symmetric matrix of size $d$ with real coefficients and $R(x,y)=O(y^3)$ is of order at least 3 in $y$. Let $H$ be $C^{\ell}$-smooth with $\ell$ sufficiently large, and let $\Gamma_0$ be Kolmogorov nondegenerate. Then the torus $\Gamma_0$ is KAM stable (the definition of KAM stability is the same as above, ``analytic'' is replaced with ``smooth'').

Besides the Hamiltonian structure (or symplectic structure for mappings), there is so-called reversible structure for differential equations or mappings on which KAM theory can be constructed. In 1965, Moser \cite{b3} initiated the study of  reversible KAM theory. In 1973, Moser \cite{a19} constructed a KAM theorem for the reversible equation
$$\dot{x}=\omega_0+y+f(x,y), \ \ \ \ \dot{y}=g(x,y),$$
where $f$ and $g$ are analytic in their arguments and reversible with respect to the involution G: $(x,y)\mapsto(-x,y)$, that is,
$$f(-x,y)=f(x,y), \ \ \ g(-x,y)=-g(x,y).$$
The KAM theory for reversible equations (vector-fields) of more general form was deeply
investigated in \cite{a4, a5, a6, a11, a12, a13, a14, a17, a18}. In addition, there are many applications of reversible KAM theory (see \cite{a7, a8, a9, a10, a16}).

In the present paper,  we will study the following reversible system
\begin{equation}\label{fc1-3}
\begin{cases}
\dot{x}=\omega_0+f(x,y),\\
\dot{y}=g(x,y),
\end{cases}
\end{equation}
where $x\in\mathbf{T}^{d}$, $y\backsim0\in \mathbf{R}^{d}$, $\omega_0$ is Diophantine, $f(x,y)=O(y)$, $g(x,y)=O(y^2)$ and $f$, $g$ are reversible with respect to the involution G: $(x,y)\mapsto(-x,y)$, that is, $f(-x,y)=f(x,y)$, $g(-x,y)=-g(x,y)$.

A vector $\omega_0$ is said to be Diophantine if there exist constants $0<\gamma_0<1$ and $\tau_0>d-1$ such that
\begin{equation}\label{fd1-2}
| \langle k,\omega_0\rangle|\geq\frac{\gamma_0}{| k|^{\tau_0}},\ \ \ \forall k\in\mathbf{Z}^{d}/ \left\{0 \right\}.
\end{equation}
We then use the notation $\omega_0\in DC(\gamma_0,\tau_0)$ where $\tau_0$ is the Diophantine exponent of $\omega_0$
and $\gamma_0$ is its Diophantine constant.

Clearly, the torus $\Gamma_0:=\mathbf{T}^{d}\times\left\{y=0 \right\}$ of (\ref{fc1-3}) is an analytic invariant torus with frequency $\omega_0$, i.e., $\Gamma_0$ is an analytic $d$-torus that is invariant under both the system (\ref{fc1-3}) and the reversing involution G and carries quasi-periodic motions with frequency $\omega_0$.

Let
  \begin{equation*}
\begin{cases}
\dot{\phi}=f_F(\mu),\\
\dot{\mu}=0
\end{cases}
\end{equation*}
be the BNF of (\ref{fc1-3}) around $\Gamma_0$, that is, a uniquely defined formal power series in the $\mu$ variable as soon as $\omega_0$ is Diophantine (see Lemma \ref{lem2-3} and Remark \ref{rem2-4}). We say that the BNF of (\ref{fc1-3}) around $\Gamma_0$ is $j$-degenerate if there exist $j$ orthonormal vectors $\gamma_1,\cdots,\gamma_j$  such that, for every $\mu\backsim0\in \mathbf{R}^{d}$,
\begin{equation*}
\langle f_{F}(\mu), \gamma_i\rangle=0,\ \ \forall1\leq i\leq j,
\end{equation*}
but no $j+1$ orthonormal vectors with this property. Since $\omega_0\neq0$ clearly $j\leq d-1$. 0-degenerate is also said to be nondegenerate.

In the present article, we will prove the following theorem:

 \begin{thm}\label{thm1-1} Let $B(r):=\left\{y\in \mathbf{R}^{d}: |y|=\sqrt{y_1^2+\cdots+y_d^2}<r \right\}$ be a d-dimensional ball in $\mathbf{R}^{d}$, centered at the origin, with a small radius $r>0$. Assume $f(x,y)=O(y)$, $g(x,y)=O(y^2)${\rm:} $\mathbf{T}^{d}\times B(r)\rightarrow\mathbf{R}^d$ are analytic, where $f(x,y)=O(y)$, $g(x,y)=O(y^2)$ mean that $f=\sum_{|\alpha|\geq1, \alpha\in\mathbf{N}^d}f_{\alpha}(x)y^{\alpha}$ and $g=\sum_{|\beta|\geq2, \beta\in\mathbf{N}^d}g_{\beta}(x)y^{\beta}$ are power series in $y\in B(r)$ which are convergent uniformly for $x\in\mathbf{T}^{d}$. In addition, assume that $\omega_0\in DC(\gamma_0,\tau_0)$.
 Then:\\
 {\rm(i)} If the BNF of {\rm (\ref{fc1-3})} around $\Gamma_0$ is 0-degenerate, then $\Gamma_0$ is accumulated by other analytic invariant tori of {\rm(\ref{fc1-3})}, the Lebesgue measure of the union of these tori being positive and the density of the union of these tori at $\Gamma_0$ being one.\\
{\rm(ii)} If the BNF of {\rm (\ref{fc1-3})} around $\Gamma_0$ is $j$-degenerate {\rm(}$1\leq j\leq d-1${\rm)} and for every $\mu\backsim0\in \mathbf{R}^{d}$,
\begin{equation}\label{fb1-100}
\frac{\partial (f_{F})_i(\mu)}{\partial \mu_{l}}\equiv\frac{\partial (f_{F})_l(\mu)}{\partial \mu_i}, \ \ \ \  \forall1\leq i,l\leq d,
\end{equation}
where $f_{F}=((f_F)_1,\cdots, (f_F)_d )^{T}$, $\mu=(\mu_1,\cdots,\mu_d)^{T}$.
  Then through $\Gamma_0$ there passes an analytic subvariety of dimension $d+j$ foliated into analytic
invariant tori of {\rm (\ref{fc1-3})} with frequency vector $\omega_0$; \\
{\rm(iii)} If the BNF of {\rm (\ref{fc1-3})} around $\Gamma_0$ is $d-1$-degenerate, we have a stronger result than (ii), that is,  a full neighborhood of $\Gamma_0$ is foliated into analytic invariant tori of {\rm(\ref{fc1-3})} with frequency vectors proportional to $\omega_0$.
\end{thm}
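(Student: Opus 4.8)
The plan is to transpose the strategy of Eliasson--Fayad--Krikorian \cite{a2} to the reversible category, requiring every coordinate change to commute with the involution $G$. The starting point is the formal Birkhoff normal form of Lemma~\ref{lem2-3}: for each integer $N$ I would produce, by finitely many $G$-equivariant analytic changes of variables, a normalized system
\begin{equation*}
\dot{\phi}=f_F^{\le N}(\mu)+F_N(\phi,\mu),\qquad \dot{\mu}=G_N(\phi,\mu),
\end{equation*}
where $f_F^{\le N}$ is the degree-$N$ truncation of the BNF frequency and the reversibility relations $F_N(-\phi,\mu)=F_N(\phi,\mu)$, $G_N(-\phi,\mu)=-G_N(\phi,\mu)$ are preserved at every step, while $F_N,G_N$ vanish to high order in $\mu$ and hence are small on a ball $|\mu|<r_N$. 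Existence, uniqueness and well-definedness of the degeneracy index come from Lemma~\ref{lem2-3} and Remark~\ref{rem2-4}.

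For part~(i), $0$-degeneracy is precisely R\"ussmann-type weak nondegeneracy of $\mu\mapsto f_F(\mu)$ (its image lies in no proper linear subspace of $\mathbf{R}^d$), which I would feed into a reversible KAM theorem with R\"ussmann nondegeneracy (cf. Moser~\cite{a19}). The decisive point, and the step I expect to be the main obstacle, is that the BNF need not converge, so one cannot pass to an exact normal form; instead one must tie the truncation order $N$ to the radius $r$, balance the remainder $\norm{F_N}+\norm{G_N}$ against the quantitative nondegeneracy estimate, and run the KAM iteration on each dyadic annulus $\{r/2<|\mu|<r\}$. Letting the annuli shrink to $\Gamma_0$ and summing the measure estimates produces a Cantor family of $G$-symmetric analytic invariant tori of positive Lebesgue measure with density tending to one at $\Gamma_0$; the $G$-symmetry of the limit tori is inherited from the $G$-equivariance kept throughout the scheme.

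For part~(ii), condition~(\ref{fb1-100}) says that the Jacobian $\partial f_F/\partial\mu$ is symmetric, so $f_F=\nabla_\mu W$ for a scalar potential $W$; this is the reversible surrogate of the gradient (hence coisotropic) structure that is automatic in the Hamiltonian case, and it is what makes the EFK construction go through. After an orthogonal rotation of the $\mu$-coordinates aligning the degenerate directions $\gamma_1,\dots,\gamma_j$ with the last $j$ axes, $j$-degeneracy means $f_F$ takes values in the $(d-j)$-dimensional space $\{\gamma_1,\dots,\gamma_j\}^{\perp}$, which contains $\omega_0=f_F(0)$. I would then study the level set $\{\mu:f_F(\mu)=\omega_0\}$: the $d-j$ nondegenerate equations cut out a $j$-dimensional analytic set (here (\ref{fb1-100}) guarantees the expected rank and that the set is a genuine subvariety), while along the $j$ degenerate directions the frequency stays frozen at $\omega_0$. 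Fibering the invariant $d$-tori over this set gives an analytic subvariety of dimension $d+j$; since every torus in the family carries the fixed Diophantine frequency $\omega_0$, its small divisors are uniformly bounded below and the tori can be produced by a convergent fixed-frequency KAM iteration, so they are genuine analytic invariant tori of~(\ref{fc1-3}).

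For part~(iii), $(d-1)$-degeneracy forces the image of $f_F$ into a line, and since $f_F(0)=\omega_0$ that line is $\mathbf{R}\omega_0$, whence $f_F(\mu)=\lambda(\mu)\,\omega_0$ with $\lambda(0)=1$. Then for all small $\mu$ and all $k\neq0$,
\begin{equation*}
|\langle k,f_F(\mu)\rangle|=|\lambda(\mu)|\,|\langle k,\omega_0\rangle|\geq \tfrac{1}{2}\gamma_0|k|^{-\tau_0},
\end{equation*}
i.e. a uniform Diophantine condition holds on a full neighborhood with no loss of measure. Hence the Birkhoff normalization converges on a full neighborhood of $\Gamma_0$ (the reversible counterpart of R\"ussmann~\cite{b1}), and in these convergent coordinates every level $\{\mu=\mathrm{const}\}$ is an analytic invariant torus with frequency $\lambda(\mu)\omega_0$ proportional to $\omega_0$, yielding the foliation. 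Thus the common engine is the $G$-equivariant quantitative normal-form/KAM iteration of the first paragraph, after which parts~(ii) and~(iii) follow from the convergence gained from the frozen, respectively proportional, frequencies.
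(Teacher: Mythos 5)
The central gap is in your parts (ii) and (iii), and it is the same gap in both: you treat the formal series $f_F$ as if it were a genuine analytic function. The BNF is in general divergent, so the level set $\left\{\mu: f_F(\mu)=\omega_0\right\}$ in your part (ii) is not a well-defined analytic set, and the bound $|\langle k,f_F(\mu)\rangle|\geq\frac{1}{2}\gamma_0|k|^{-\tau_0}$ in your part (iii) is not a statement about the frequency of any actual object. Consequently the two convergence claims you lean on are not available tools but are themselves the hard content of the theorem: in (ii), knowing that the \emph{formal} frequency is frozen at $\omega_0$ along ${\rm Lin}(\gamma_1,\dots,\gamma_j)$ does not produce a convergent fixed-frequency KAM iteration --- for such an iteration to converge one must know that the frequency of the \emph{true} system (not of a finite truncation) can be held at $\omega_0$, i.e.\ that a certain counterterm vanishes identically in the parameter, and proving that vanishing is exactly the problem; in (iii), ``hence the Birkhoff normalization converges (the reversible counterpart of R\"ussmann)'' simply invokes the reversible analogue of R\"ussmann's convergence theorem, which is essentially the statement to be proved.

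The missing mechanism --- the engine of the paper --- is the KAM counterterm theorem with flat remainders (Sections \ref{sec3}--\ref{sec6}). After writing $\mu=\xi+y$ and introducing a free frequency parameter $\eta$, the Newton iteration is run for \emph{all} $(\xi,\eta)$, with no Diophantine condition imposed along the way: the small divisors are removed by the cut-off operator $P_{\gamma,\tau}$, at the price of a genuine (not formal) counterterm $\Lambda(\xi,\eta)$ and remainders $R^{(1)}_{(\infty)}, R^{(2)}_{(\infty)}$ that are flat on $DC(\gamma,\tau)$. Invariant tori then exist precisely at those $\xi$ for which the solution $\eta(\xi)$ of $\omega_0-\eta+\Lambda(\xi,\eta)=0$ lies in $DC(\gamma,\tau)$, because there the flat terms vanish; moreover the Taylor series of $\eta$ at $0$ is $f_F$ (Lemma \ref{lem7-2}). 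Part (ii) then follows because condition (\ref{fb1-100}) together with $j$-degeneracy gives $\partial_v(f_F)_i=\partial_{\xi_i}\langle f_F,v\rangle\equiv0$ for $v\in{\rm Lin}(\gamma_1,\dots,\gamma_j)$ (your gradient remark $f_F=\nabla W$ is true, but this constancy of $f_F$ along the degenerate directions is what is actually used), whence all derivatives of $\Lambda(\cdot,\omega_0)$ at $\xi=0$ in these directions vanish, and analyticity then upgrades this infinite-order vanishing to the identity $\eta(\langle s,\gamma\rangle)\equiv\omega_0\in DC(\gamma,\tau)$; part (iii) is the same argument on the line $\xi=t\omega_0$, which simultaneously shows that the one-variable series $\mu(t)$ converges. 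Your part (i), by contrast, is a legitimate alternative outline (a Bounemoura-type scheme with R\"ussmann nondegeneracy on dyadic annuli), though to get positive measure and density one at $\Gamma_0$ you would still need the uniform transversality estimate of Lemma \ref{lem7-3} and a choice of $\gamma$ tending to $0$ with the radius; the paper instead derives (i) from the same counterterm theorem together with Lemmas \ref{lem7-3} and \ref{lem7-4}. Without the counterterm/flatness framework, or an equivalent device controlling the frequency beyond all finite orders, your steps (ii) and (iii) do not close.
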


Let us introduce the plan of proof. In Section \ref{sec2}, we perform $N$ times Birkhoff normal form changes such that (\ref{fc1-3}) can be written as a new form (see Lemma \ref{lem2-1} and (\ref{fc2-2})) and give the BNF of (\ref{fc1-3}) (see Lemma \ref{lem2-3} and Remark \ref{rem2-4}). In Section \ref{sec3}, we introduce the flat function and flat operator, which will be used in Section \ref{sec5}. In Sections \ref{sec4}-\ref{sec6},  we formulate a KAM counterterm theorem, specifically, in Section \ref{sec4}, we give a iterative lemma, in Section \ref{sec5}, we give the proof of the iterative lemma, in Section \ref{sec6}, we derive the KAM counterterm theorem by the iterative lemma.  In Section \ref{sec7}, we derive Theorem \ref{thm1-1} by the KAM counterterm theorem.

\section{Some notations and the BNF}\label{sec2}
Let us here introduce some notations. Let $B(c)=\left\{y\in \mathbf{R}^{d}: \sqrt{y_1^2+\cdots+y_d^2}<c \right\}$ with any $c>0$. By $B_{\mathbf{C}}(c)=\left\{z\in \mathbf{C}^{d}: \sqrt{|z_1|^2+\cdots+|z_d|^2}<c \right\}$ denote the complexization of $B(c)$. Let $\mathbf{T}_s^d=\left\{x\in \mathbf{C}^{d}/\mathbf{(2\pi Z)}^{d}: |{\rm Im}x_j|< s,j=1,\cdots,d \right\}$ be the complexization of $\mathbf{T}^d$ with width $s>0$. For $s>0$, $c>0$, $c_1>0$, $c_2>0$, let
$$D(s,c,c_1,c_2)=\mathbf{T}_{s}^d\times B_{\mathbf{C}}(c)\times B(c_1)\times B(c_2).$$
For $(x,y,\xi,\eta)\in D(s,c,c_1,c_2)$, we call that $x$, $y$, $\xi$, $\eta$ are angle variable, action variable, parameter, parameter, respectively.

We define
$$C^{\omega,\infty}(\mathbf{T}_{s}^d\times B_{\mathbf{C}}(c)\times B(c_1)\times B(c_2))=C^{\omega,\infty}(D(s,c,c_1,c_2))$$
be the set of $C^{\infty}$ function (possibly vector valued or matrix valued)
$$f:D(s,c,c_1,c_2)\ni (x,y,\xi,\eta)\mapsto f(x,y,\xi,\eta)\in \mathbf{C} \ or\ \mathbf{C}^{d}\ or \ \mathbf{C}^{d\times d}$$
such that for all $(\xi,\eta)\in B(c_1)\times B(c_2)$,
$$f_{\xi,\eta}:\mathbf{T}_{s}^d\times B_{\mathbf{C}}(c) \ni (x,y)\mapsto f(x,y,\xi,\eta)$$
is real analytic, and that for all $(x,y)\in \mathbf{T}_{s}^d\times B_{\mathbf{C}}(c)$,
$$f_{x,y}:B(c_1)\times B(c_2)\ni (\xi,\eta)\mapsto f(x,y,\xi,\eta)$$
is $C^{\infty}$ function. Fix an integer $\ell\geq0$. Define for $\ell\geq0$,
$$\|f\|_{D(s,c,c_1,c_2),\ell}=\sup_{|\alpha|\leq\ell,\alpha\in\mathbf{N}^d,
(x,y,\xi,\eta)\in D(s,c,c_1,c_2)}|\partial^{\alpha}_{\eta}f(x,y,\xi,\eta)|.$$
When $f\in C^{\omega,\infty}(D(s,c,c_1,c_2))$ is independent of  $\eta$, we write $f=f(x,y,\xi):\mathbf{T}_{s}^d\times B_{\mathbf{C}}(c)\times B(c_1)\rightarrow \mathbf{C}$ or $\mathbf{C}^{d}$ or $\mathbf{C}^{d\times d}$, and call $f\in C^{\omega,\infty}(D(s,c,c_1,0))$ and define
$$\|f\|_{D(s,c,c_1,0),\ell}=\|f\|_{D(s,c,c_1,0),0}=\sup_{(x,y,\xi)\in \mathbf{T}_{s}^d\times B_{\mathbf{C}}(c)\times B(c_1)}|f(x,y,\xi)|.$$
When $f\in C^{\omega,\infty}(D(s,c,c_1,c_2))$ is independent of one of variables $x$, $y$, $\xi$, say, $x$, we write $f=f(y,\xi,\eta):B_{\mathbf{C}}(c)\times B(c_1)\times B(c_2)\rightarrow \mathbf{C}$ or $\mathbf{C}^d$ or $\mathbf{C}^{d\times d}$, and call $f\in C^{\omega,\infty}(D(0,c,c_1,c_2))$ and define
$$\|f\|_{D(0,c,c_1,c_2),\ell}=\sup_{|\alpha|\leq\ell,\alpha\in\mathbf{N}^d,(y,\xi,\eta)\in B_{\mathbf{C}}(c)\times B(c_1)\times B(c_2)}|\partial^{\alpha}_{\eta}f(y,\xi,\eta)|.$$
Similarly, we define $f=f(\xi,\eta)\in C^{\omega,\infty}(D(0,0,c_1,c_2))$, $\|f\|_{D(0,0,c_1,c_2),\ell}$, $f=f(\xi)\in C^{\omega,\infty}(D(0,0,c_1,0))$, $\|f\|_{D(0,0,c_1,0),\ell}$, and so on. Here we do not give out the details. Clearly,
$$C^{\omega,\infty}(D(0,c,c_1,c_2)), C^{\omega,\infty}(D(0,0,c_1,c_2)), and\ so\ on \subset C^{\omega,\infty}(D(s,c,c_1,c_2)).$$
In addition, we call $f=O^j(\xi)$ if  and only if
$$\partial^{\alpha}_{\xi}f(x,y,\xi,\eta)|_{\xi=0}=0, \ \ \forall|\alpha|<j.$$
Similarly, we call $f=O^j(y)$ if  and only if
$$\partial^{\alpha}_{y}f(x,y,\xi,\eta)|_{y=0}=0, \ \ \forall|\alpha|<j.$$
For a formal power series
$$f=f(x,y,\xi,\eta)=\sum_{\alpha,\bar{\alpha}\in\mathbf{N}^d}f_{\alpha,\bar{\alpha}}(x,\eta)y^{\alpha}\xi^{\bar{\alpha}}$$
whose coefficients $f_{\alpha,\bar{\alpha}}(x,\eta)\in C^{\omega,\infty}(D(s,0,0,c_2))$. We denote by
$$[f(x,y,\xi,\eta)]_j=\sum_{|\alpha|+|\bar{\alpha}|=j}f_{\alpha,\bar{\alpha}}(x,\eta)y^{\alpha}\xi^{\bar{\alpha}}$$
the homogenous component of degree $j$.
 For $f=f(x,y,\xi,\eta)$, $(x,y,\xi,\eta)\in D(s,c,c_1,c_2)$, we denote by
$$\hat{f}(k,y,\xi,\eta)=\frac{1}{(2\pi)^{d}}\int_0^{2\pi}\cdots\int_0^{2\pi}f(x,y,\xi,\eta)e^{-i\langle k,x\rangle}dx,\ k\in\mathbf{Z}^{d}$$
the $k$-Fourier coefficient of $f$. When $f$ takes up a lot of space, we by $f^{\hat{}}(k,y,\xi,\eta)$  instead of $\hat{f}(k,y,\xi,\eta)$ denote the $k$-Fourier coefficient of $f$. Similarly, for $f=f(x)$, we can define $\hat{f}(k)$, for $f=f(x,y)$, we can define $\hat{f}(k,y)$ and so on.

We write $f=O(y^n)$ ($n=0,1,2,\cdots$), if $f\in C^{\omega,\infty}(D(s,c,c_1,c_2))$ and
$$f=f(x,y,\xi,\eta)=\sum_{\alpha\in\mathbf{N}^d,|\alpha|\geq n}f_{\alpha}(x,\xi,\eta)y^{\alpha}$$
for $|y|<c$ and uniformly for $(x,\xi,\eta)\in \mathbf{T}^d_{s}\times B(c_1)\times B(c_2)$ with some $s>0$, $c_1>0$, $c_2>0$. Here we do not care about the size of $s>0$, $c_1>0$, $c_2>0$.

In the whole of the present paper we always denote by C a universal constant which may be different in different places.

Recall that $f(x,y)=O(y)$, $g(x,y)=O(y^2)$ in (\ref{fc1-3}) are analytic in $\mathbf{T}^{d}\times B(r)$. So the definition domain of the functions $f(x,y)$, $g(x,y)$ can be extended to the complexization domain $\mathbf{T}^{d}_s\times B_{\mathbf{C}}(r)$. The extended functions $f(x,y)$, $g(x,y)$ are called to be real analytic, that is, $f(x,y)$, $g(x,y)$ are analytic in the complexization domain $\mathbf{T}^{d}_s\times B_{\mathbf{C}}(r)$ and are real when the argument vector $(x,y)$ are real.

Now, let  $0<s<1$ fixed and $0<r\ll s<1$. By replacing $(x,y)$ with $(\tilde{x},\tilde{y})$, we can rewrite (\ref{fc1-3}) as
\begin{equation}\label{fc2-1}
\begin{cases}
\dot{\tilde{x}}=\omega_0+\sum_{|\alpha|\geq1}f_{\alpha}^{(0)}(\tilde{x})\tilde{y}^{\alpha},\\
\dot{\tilde{y}}=\sum_{|\beta|\geq2}g_{\beta}^{(0)}(\tilde{x})\tilde{y}^{\beta},
\end{cases}
\end{equation}
where $(\tilde{x},\tilde{y})\in D(s,r,0,0)$, $f_{\alpha}^{(0)}, g_{\beta}^{(0)}\in C^{\omega,\infty}(D(s,0,0,0))$, $f_{\alpha}^{(0)}(-\tilde{x})=f_{\alpha}^{(0)}(\tilde{x})$, $g_{\beta}^{(0)}(-\tilde{x})=-g_{\beta}^{(0)}(\tilde{x})$ and  $$\|\sum_{|\alpha|\geq1}f_{\alpha}^{(0)}(\tilde{x})\tilde{y}^{\alpha}\|_{D(s,r,0,0),0}\leq Cr, \ \|\sum_{|\beta|\geq2}g_{\beta}^{(0)}(\tilde{x})\tilde{y}^{\beta}\|_{D(s,r,0,0),0}\leq Cr^2.$$
 Let $N>10d$, $s'_0=s,\ s'_j=s(1-\frac{j}{2N}),\ r'_0=r,\ r'_j=r(1-\frac{j}{2N}),\ j=1,2,\cdots,N.$
 Then we will perform $N$ times Birkhoff normal form changes such that (\ref{fc2-1}) can be written as a new form (\ref{fc2-2}). First, we have the following lemma.
\begin{lem}\label{lem2-1}  Suppose that we have had $m+1$ {\rm(}$m=0,1,2,\cdots,N-1${\rm)} diffeomorphisms $\tilde{\Phi}_0=\tilde{\Psi}_0^{-1}=id$, $\tilde{\Phi}_1=\tilde{\Psi}_1^{-1}$, $\cdots$, $\tilde{\Phi}_m=\tilde{\Psi}_m^{-1}$ with
$$\tilde{\Psi}_j:D(s'_j,r'_j,0,0)\rightarrow D(s'_{j-1},r'_{j-1},0,0), \ j=1,2,\cdots,m$$
of the form
$$\tilde{\Phi}_j:x=\tilde{x}+\sum_{|\alpha|=j}u_{\alpha}(\tilde{x})\tilde{y}^{\alpha}, \ y=\tilde{y}+\sum_{|\beta|=j+1}v_{\beta}(\tilde{x})\tilde{y}^{\beta},\ j=1,2,\cdots,m,$$
where $u_{\alpha}, v_{\beta}\in C^{\omega,\infty}(D(s'_j,0,0,0))$, and
$$\|\sum_{|\alpha|=j}u_{\alpha}(\tilde{x})\tilde{y}^{\alpha}\|_{D(s'_j,r'_j,0,0),0}\leq Cr^{j},\ \|\sum_{|\beta|=j+1}v_{\beta}(\tilde{x})\tilde{y}^{\beta}\|_{D(s'_j,r'_j,0,0),0}\leq Cr^{j+1}, \ j=1,2,\cdots,m$$
such that system {\rm(\ref{fc2-1})} is changed by $\tilde{\Phi}^{(m)}=\tilde{\Phi}_0\circ\cdots\tilde{\Phi}_m$ into
 \begin{equation}\label{fc2-3}
\begin{cases}
\dot{x}=\omega_0+\sum_{0\leq|\alpha|\leq m}d_{\alpha}y^{\alpha}+\sum_{|\alpha|\geq m+1}f_{\alpha}^{(m)}(x)y^{\alpha},\\
\dot{y}=\sum_{|\beta|\geq m+2}g_{\beta}^{(m)}(x)y^{\beta},
\end{cases}
\end{equation}
where $d_{\alpha}$ are constants, $d_0=0$, $f_{\alpha}^{(m)}, g_{\beta}^{(m)}\in C^{\omega,\infty}(D(s'_m,0,0,0))$, $f_{\alpha}^{(m)}(-x)=f_{\alpha}^{(m)}(x)$, $g_{\beta}^{(m)}(-x)=-g_{\beta}^{(m)}(x)$ and  $$\|\sum_{|\alpha|\geq m+1}f_{\alpha}^{(m)}(x)y^{\alpha}\|_{D(s'_m,r'_m,0,0),0}\leq Cr^{m+1},$$  $$\|\sum_{|\beta|\geq m+2}g_{\beta}^{(m)}(x)y^{\beta}\|_{D(s'_m,r'_m,0,0),0}\leq Cr^{m+2}.$$
Then there is a diffeomorphism $\tilde{\Phi}_{m+1}=\tilde{\Psi}_{m+1}^{-1}$ with
$$\tilde{\Psi}_{m+1}:D(s'_{m+1},r'_{m+1},0,0)\rightarrow D(s'_m,r'_m,0,0)$$
of the form
$$\tilde{\Phi}_{m+1}:\theta=x+\sum_{|\alpha|=m+1}u_{\alpha}(x)y^{\alpha},\
\rho=y+\sum_{|\beta|=m+2}v_{\beta}(x)y^{\beta},$$
where $u_{\alpha}, v_{\beta}\in C^{\omega,\infty}(D(s'_{m+1},0,0,0)),$ \ \ \ $\|\sum_{|\alpha|=m+1}u_{\alpha}(x)y^{\alpha}\|_{D(s'_{m+1},r'_{m+1},0,0),0}\leq Cr^{m+1}$,  \ \ \ $\|\sum_{|\beta|=m+2}v_{\beta}(x)y^{\beta}\|_{D(s'_{m+1},r'_{m+1},0,0),0}\leq Cr^{m+2}$ such that system {\rm(\ref{fc2-3})} is changed by $\tilde{\Phi}_{m+1}$ into
 \begin{equation}\label{fcb2-1}
\begin{cases}
\dot{\theta}=\omega_0+\sum_{0\leq|\alpha|\leq m+1}d_{\alpha}\rho^{\alpha}+\sum_{|\alpha|\geq m+2}f_{\alpha}^{(m+1)}(\theta)\rho^{\alpha},\\
\dot{\rho}=\sum_{|\beta|\geq m+3}g_{\beta}^{(m+1)}(\theta)\rho^{\beta},
\end{cases}
\end{equation}
where $d_{\alpha}$ are constants, $d_0=0$, $f_{\alpha}^{(m+1)}(\theta), g_{\beta}^{(m+1)}(\theta)\in C^{\omega,\infty}(D(s'_{m+1},0,0,0))$, $f_{\alpha}^{(m+1)}(-\theta)=f_{\alpha}^{(m+1)}(\theta)$, $g_{\beta}^{(m+1)}(-\theta)=-g_{\beta}^{(m+1)}(\theta)$ and  $$\|\sum_{|\alpha|\geq m+2}f_{\alpha}^{(m+1)}(\theta)\rho^{\alpha}\|_{D(s'_{m+1},r'_{m+1},0,0),0}\leq Cr^{m+2},$$
$$\|\sum_{|\beta|\geq m+3}g_{\beta}^{(m+1)}(\theta)\rho^{\beta}\|_{D(s'_{m+1},r'_{m+1},0,0),0}\leq Cr^{m+3}.$$
In other words, system {\rm(\ref{fc2-1})} is changed by $\tilde{\Phi}^{(m+1)}=\tilde{\Phi}_0\circ\cdots\tilde{\Phi}_m\circ\tilde{\Phi}_{m+1}$ into {\rm(\ref{fcb2-1})}.
  \end{lem}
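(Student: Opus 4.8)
The plan is to construct $\tilde{\Phi}_{m+1}$ as a near-identity coordinate change that removes the $x$-dependence of the degree-$(m+1)$ term in the $\dot{x}$-equation and eliminates the entire degree-$(m+2)$ term in the $\dot{y}$-equation of (\ref{fc2-3}), while leaving everything of lower degree untouched. Writing the pushforward of the vector field under $\tilde{\Phi}_{m+1}:\theta=x+u_{m+1}(x)y^{m+1},\ \rho=y+v_{m+1}(x)y^{m+2}$, I would differentiate along the flow of (\ref{fc2-3}) and track the homogeneous degree in $y$. Using $\dot{x}=\omega_0+O(y)$ and $\dot{y}=O(y^{m+2})$, the only degree-$(m+1)$ contribution to $\dot{\theta}$ beyond the terms already present is $\langle\omega_0,\partial_x u_{m+1}(x)\rangle y^{m+1}$ (the remaining contributions, coming from $\partial_y(u_{m+1}y^{m+1})\,\dot{y}$ and from $\partial_x u_{m+1}\cdot(\dot{x}-\omega_0)$, are of degree $\geq m+2$). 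Hence the degree-$(m+1)$ part of $\dot{\theta}$ becomes $[f_{m+1}^{(m)}(x)+\langle\omega_0,\partial_x u_{m+1}(x)\rangle]y^{m+1}$, and similarly the degree-$(m+2)$ part of $\dot{\rho}$ becomes $[g_{m+2}^{(m)}(x)+\langle\omega_0,\partial_x v_{m+1}(x)\rangle]y^{m+2}$.

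This reduces the lemma to solving, coefficient by coefficient over multi-indices $\alpha$ with $|\alpha|=m+1$ and $\beta$ with $|\beta|=m+2$, the two homological equations
$$\langle\omega_0,\partial_x u_{m+1,\alpha}(x)\rangle=d_\alpha-f_\alpha^{(m)}(x),\qquad \langle\omega_0,\partial_x v_{m+1,\beta}(x)\rangle=-\,g_\beta^{(m)}(x),$$
where $d_\alpha$ is chosen to be the $x$-average of $f_\alpha^{(m)}$. Expanding in Fourier series and using $\omega_0\in DC(\gamma_0,\tau_0)$, I would set $\hat{u}_{m+1,\alpha}(k)=-\hat{f}_\alpha^{(m)}(k)/(i\langle k,\omega_0\rangle)$ and $\hat{v}_{m+1,\beta}(k)=-\hat{g}_\beta^{(m)}(k)/(i\langle k,\omega_0\rangle)$ for $k\neq0$, with the zero modes free (taken to be $0$). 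The first equation is always solvable once $d_\alpha$ absorbs the mean. The decisive point for the second is that its right-hand side must have zero $x$-average, and this is exactly guaranteed by reversibility: since $g_\beta^{(m)}(-x)=-g_\beta^{(m)}(x)$ is odd, $\hat{g}_\beta^{(m)}(0)=0$. Moreover the parities propagate through these Fourier formulas, giving $u_{m+1}$ odd and $v_{m+1}$ even, which is precisely the condition making $\tilde{\Phi}_{m+1}$ commute with $G$; consequently the transformed system is again reversible and the new coefficients satisfy $f_\alpha^{(m+1)}(-\theta)=f_\alpha^{(m+1)}(\theta)$, $g_\beta^{(m+1)}(-\theta)=-g_\beta^{(m+1)}(\theta)$.

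For the quantitative estimates I would combine the standard small-divisor bound (loss of analyticity width from $s'_m$ to $s'_{m+1}$, controlled by $\gamma_0^{-1}$ and $\tau_0$) with the inductive bounds on $f_\alpha^{(m)}$, $g_\beta^{(m)}$ to obtain $\|u_{m+1}(x)y^{m+1}\|_{D(s'_{m+1},r'_{m+1},0,0),0}\leq Cr^{m+1}$ and $\|v_{m+1}(x)y^{m+2}\|_{D(s'_{m+1},r'_{m+1},0,0),0}\leq Cr^{m+2}$ on the shrunken domain; the geometric schedule $s'_j=s(1-\tfrac{j}{2N})$, $r'_j=r(1-\tfrac{j}{2N})$ leaves room at each of the finitely many ($N$) steps. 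Since $u_{m+1},v_{m+1}$ are this small, $\tilde{\Phi}_{m+1}$ is $C^1$-close to the identity, so its Jacobian is invertible for $r$ small and the inverse $\tilde{\Psi}_{m+1}:D(s'_{m+1},r'_{m+1},0,0)\to D(s'_m,r'_m,0,0)$ exists by the implicit function theorem. Finally I would substitute $(x,y)=\tilde{\Psi}_{m+1}(\theta,\rho)$ into $\dot{\theta},\dot{\rho}$ and re-expand in powers of $\rho$; because $\rho=y+O(y^{m+2})$ one has $y^\alpha=\rho^\alpha+O(\rho^{|\alpha|+m+1})$, so every normal-form monomial $d_\alpha\rho^\alpha$ with $0\leq|\alpha|\leq m+1$ is reproduced exactly while each correction is pushed to degree $\geq m+2$ in $\dot{\theta}$ (resp. $\geq m+3$ in $\dot{\rho}$), and collecting these remainders into $f_\alpha^{(m+1)}$, $g_\beta^{(m+1)}$ produces (\ref{fcb2-1}) with the asserted bounds.

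The main obstacle I anticipate is the bookkeeping in this last substitution step: one must verify simultaneously that re-expressing the old higher-order tails in the new variables introduces no new degree-$(m+1)$ term in $\dot{\theta}$ or degree-$(m+2)$ term in $\dot{\rho}$ (which uses that those tails are already of the correct order before substitution and that the change of variables is triangular with respect to the degree filtration in $y$), that the remainders remain convergent power series in $\rho$ uniformly on $\mathbf{T}^d_{s'_{m+1}}$ with the stated $Cr^{m+2}$ and $Cr^{m+3}$ bounds, and that the reversibility parities survive both the inversion and the substitution. The only genuinely structural input, as opposed to routine analytic estimation, is the zero-average solvability of the $v_{m+1}$-equation, which is exactly where the reversible symmetry is indispensable.
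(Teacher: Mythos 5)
Your proposal is correct and follows essentially the same route as the paper's proof: the same ansatz for $\tilde{\Phi}_{m+1}$, the same two homological equations solved by Fourier series under the Diophantine condition (with $d_{m+1}=\hat{f}_{m+1}^{(m)}(0)$ absorbing the mean), the same use of oddness of $g_{m+2}^{(m)}$ to get $\hat{g}_{m+2}^{(m)}(0)=0$, the same parity propagation ($u_{m+1}$ odd, $v_{m+1}$ even) to preserve reversibility, and the same inversion-plus-re-expansion step with domain shrinking to produce (\ref{fcb2-1}).
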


 \begin{proof}

 Consider a diffeomorphism $\tilde{\Phi}_{m+1}$ of the form
 \begin{equation}\label{fc2-4}
\begin{cases}
\theta=x+\sum_{|\alpha|=m+1}u_{\alpha}(x)y^{\alpha},\\
\rho=y+\sum_{|\beta|=m+2}v_{\beta}(x)y^{\beta},
\end{cases}
\end{equation}
where $u_{\alpha}$, $v_{\beta}$ will be specified. By (\ref{fc2-3}) and (\ref{fc2-4}), we have that
\begin{eqnarray}\label{fc2-6}
 \nonumber\dot{\theta}&=&\dot{x}+\sum_{|\alpha|=m+1}(\partial_{x}(u_{\alpha}(x) y^{\alpha})\cdot \dot{x})+\sum_{|\alpha|=m+1}(\partial_{y}(u_{\alpha}(x)y^{\alpha})\cdot \dot{y})\\
\nonumber&=&\omega_0+\sum_{0\leq|\alpha|\leq m}d_{\alpha}y^{\alpha}\\
\nonumber&&+\sum_{|\alpha|=m+1}(f_{\alpha}^{(m)}(x)+\omega_0\cdot \partial_{x}u_{\alpha}) y^{\alpha}\\
\nonumber&&+\sum_{|\alpha|\geq m+2}f_{\alpha}^{(m)}(x)y^{\alpha}+\partial_{x}(\sum_{|\alpha|=m+1}u_{\alpha}(x)y^{\alpha})\cdot(\sum_{0\leq|\alpha|\leq m}d_{\alpha}y^{\alpha}+\sum_{|\alpha|\geq m+1}f_{\alpha}^{(m)}(x)y^{\alpha})\\
&&+\partial_{y}(\sum_{|\alpha|=m+1}u_{\alpha}(x)y^{\alpha})\cdot \sum_{|\beta|\geq m+2}g_{\beta}^{(m)}(x)y^{\beta},
\end{eqnarray}

\begin{eqnarray}\label{fc2-7}
 \nonumber\dot{\rho}&=&\dot{y}+\sum_{|\beta|=m+2}(\partial_{x}(v_{\beta}(x) y^{\beta})\cdot \dot{x})+\sum_{|\beta|=m+2}(\partial_{y}(v_{\beta}(x) y^{\beta})\cdot\dot{y})\\
\nonumber&=&\sum_{|\beta|=m+2}(g_{\beta}^{(m)}(x)+\omega_0\cdot \partial_{x}v_{\beta}) y^{\beta}\\
\nonumber&&+\sum_{|\beta|\geq m+3}g_{\beta}^{(m)}(x)y^{\beta}+\partial_{x}(\sum_{|\beta|=m+2}v_{\beta}(x)y^{\beta})\cdot(\sum_{0\leq|\alpha|\leq m}d_{\alpha}y^{\alpha}+\sum_{|\alpha|\geq m+1}f_{\alpha}^{(m)}(x)y^{\alpha})\\
&&+\partial_{y}(\sum_{|\beta|=m+2}v_{\beta}(x)y^{\beta})\cdot\sum_{|\beta|\geq m+2}g_{\beta}^{(m)}(x)y^{\beta},
\end{eqnarray}
where $\omega_0=(\omega_{01}, \omega_{02},\cdots,\omega_{0d})^{T}$, $\omega_0\cdot \partial_{x}=\sum_{j=1}^d\omega_{0j}\partial_{x_j}$.
Noting $g_{\beta}^{(m)}(-x)=-g_{\beta}^{(m)}(x)$, $\forall|\beta|=m+2$, we have that
\begin{equation}\label{fc2-13}
\hat{g}_{\beta}^{(m)}(0)=0,\ \forall|\beta|=m+2.
\end{equation}
By (\ref{fc2-6}), (\ref{fc2-7}) and (\ref{fc2-13}), we derive homological equations:
\begin{equation}\label{fc2-14}
f_{\alpha}^{(m)}(x)+\omega_0\cdot \partial_{x}u_{\alpha}=\hat{f}_{\alpha}^{(m)}(0), \ \forall|\alpha|=m+1
\end{equation}
and
\begin{equation}\label{fc2-15}
g_{\beta}^{(m)}(x)+\omega_0\cdot \partial_{x}v_{\beta}=0, \ \forall|\beta|=m+2.
\end{equation}
By passing to Fourier coefficients, homological equation (\ref{fc2-14}) reads
\begin{equation}\label{fc2-16}
\hat{f}_{\alpha}^{(m)}(k)+i\langle k,\omega_0\rangle\hat{u}_{\alpha}(k)=0, \ \forall|\alpha|=m+1,
\end{equation}
where $k\in\mathbf{Z}^{d}/ \left\{0 \right\}$. So we can solve homological equation (\ref{fc2-14}) by setting
\begin{equation}\label{fc2-17}
u_{\alpha}(x)=-\sum_{k\in\mathbf{Z}^{d}/ \left\{0 \right\}}\frac{\hat{f}_{\alpha}^{(m)}(k)}{i\langle k,\omega_0\rangle}e^{i\langle k,x\rangle},\ \ \forall|\alpha|=m+1.
\end{equation}
 The function $f_{\alpha}^{(m)}=f_{\alpha}^{(m)}(x)$ ($\forall|\alpha|=m+1$) is analytic in $D(s'_m,0,0,0)$, so the Fourier coefficients verify
$$\|\hat{f}_{\alpha}^{(m)}(k)\|_{D(0,0,0,0),0}\leq\|f_{\alpha}^{(m)}\|_{D(s'_m,0,0,0),0}\cdot e^{-|k|s'_m}, \ \forall|\alpha|=m+1,\ \forall k\in\mathbf{Z}^{d}.$$
Then we have for $\forall|\alpha|=m+1$
\begin{eqnarray}\label{fc2-18}
 \nonumber&&\|u_{\alpha}\|_{D(s'_m-\frac{s}{20N},0,0,0),0}\\&\leq&\nonumber\sum_{k\in\mathbf{Z}^{d}/ \left\{0 \right\} }\frac{|k|^{\tau_0}\|f_{\alpha}^{(m)}\|_{D(s'_m,0,0,0),0}\cdot e^{-|k|s'_m}}{\gamma_0}e^{|k|(s'_m-\frac{s}{20N})}\\
&\leq&\nonumber\sum_{k\in\mathbf{Z}^{d}/ \left\{0 \right\} }\frac{|k|^{\tau_0}\|f_{\alpha}^{(m)}\|_{D(s'_m,0,0,0),0}\cdot e^{-\frac{s}{20N}|k|}}{\gamma_0}\\
&\leq&\nonumber\frac{C\|f_{\alpha}^{(m)}\|_{D(s'_m,0,0,0),0}}{\gamma_0(\frac{s}{20N})^{d+\tau_0}}\\
&\leq&C.
\end{eqnarray}
By (\ref{fc2-17}) and (\ref{fc2-18}), we have that
\begin{equation}\label{fb2-52}
u_{\alpha}\in C^{\omega,\infty}(D(s'_m-\frac{s}{20N},0,0,0)),\ \forall|\alpha|=m+1,
\end{equation}
\begin{equation}\label{fb2-50}
\|\sum_{|\alpha|=m+1}u_{\alpha}(x)y^{\alpha}\|_{D(s'_m-\frac{s}{20N},r'_m,0,0),0}\leq Cr^{m+1}\ll s.
\end{equation}
Similarly, we can solve homological equation (\ref{fc2-15}) by setting
\begin{equation}\label{fc2-19}
v_{\beta}(x)=-\sum_{k\in\mathbf{Z}^{d}/ \left\{0 \right\}}\frac{\hat{g}_{\beta}^{(m)}(k)}{i\langle k,\omega_0\rangle}e^{i\langle k,x\rangle},\ \forall|\beta|=m+2.
\end{equation}
And we have
\begin{equation}\label{fc2-20}
\|v_{\beta}\|_{D(s'_m-\frac{s}{20N},0,0,0),0}\leq C,\ \forall|\beta|=m+2,
\end{equation}
\begin{equation}\label{fb2-53}
v_{\beta}\in C^{\omega,\infty}(D(s'_m-\frac{s}{20N},0,0,0)),\ \forall|\beta|=m+2,
\end{equation}
\begin{equation}\label{fb2-51}
\|\sum_{|\beta|=m+2}v_{\beta}(x)y^{\beta}\|_{D(s'_m-\frac{s}{20N},r'_m,0,0),0}\leq Cr^{m+2}\ll r.
\end{equation}
By (\ref{fb2-50}), (\ref{fb2-51}) and Cauchy estimate, we have
\begin{equation}\label{fc2-21}
\|\partial_{x}(\sum_{|\alpha|=m+1}u_{\alpha}(x)y^{\alpha})\|_{D(s'_m-\frac{s}{10N},r'_m,0,0),0}\leq Cr^{m+1},
\end{equation}
\begin{equation}\label{fc2-21-1}
 \|\partial_{x}(\sum_{|\beta|=m+2}v_{\beta}(x)y^{\beta})\|_{D(s'_m-\frac{s}{10N},r'_m,0,0),0}\leq Cr^{m+2}.
\end{equation}
Let
\begin{equation}\label{fc2-29}
d_{\alpha}=\hat{f}_{\alpha}^{(m)}(0),\ \forall|\alpha|=m+1.
\end{equation}
By (\ref{fc2-6}), (\ref{fc2-7}), (\ref{fc2-14}), (\ref{fc2-15}) and (\ref{fc2-29}), we have
\begin{eqnarray}\label{fb2-1}
 \nonumber\dot{\theta}&=&\omega_0+\sum_{0\leq|\alpha|\leq m+1}d_{\alpha}y^{\alpha}\\
\nonumber&&+\sum_{|\alpha|\geq m+2}f_{\alpha}^{(m)}(x)y^{\alpha}+\partial_{x}(\sum_{|\alpha|=m+1}u_{\alpha}(x)y^{\alpha})\cdot(\sum_{0\leq|\alpha|\leq m}d_{\alpha}y^{\alpha}+\sum_{|\alpha|\geq m+1}f_{\alpha}^{(m)}(x)y^{\alpha})\\
&&+\partial_{y}(\sum_{|\alpha|=m+1}u_{\alpha}(x)y^{\alpha})\cdot \sum_{|\beta|\geq m+2}g_{\beta}^{(m)}(x)y^{\beta},
\end{eqnarray}

\begin{eqnarray}\label{fb2-2}
 \nonumber\dot{\rho}&=&\sum_{|\beta|\geq m+3}g_{\beta}^{(m)}(x)y^{\beta}+\partial_{x}(\sum_{|\beta|=m+2}v_{\beta}(x)y^{\beta})\cdot(\sum_{0\leq|\alpha|\leq m}d_{\alpha}y^{\alpha}+\sum_{|\alpha|\geq m+1}f_{\alpha}^{(m)}(x)y^{\alpha})\\
&&+\partial_{y}(\sum_{|\beta|=m+2}v_{\beta}(x)y^{\beta})\cdot \sum_{|\beta|\geq m+2}g_{\beta}^{(m)}(x)y^{\beta}.
\end{eqnarray}
By (\ref{fc2-4}), (\ref{fb2-52}), (\ref{fb2-50}), (\ref{fb2-53})-(\ref{fc2-21-1}) and by means of implicit theorem, we have that $\tilde{\Phi}_{m+1}^{-1}=\tilde{\Psi}_{m+1}$:
 \begin{equation}\label{fc2-22}
\begin{cases}
x=\theta+\sum_{|\alpha|\geq m+1}U_{\alpha}(\theta)\rho^{\alpha},\\
y=\rho+\sum_{|\beta|\geq m+2}V_{\beta}(\theta)\rho^{\beta},
\end{cases}
\end{equation}
where $(\theta,\rho)\in D(s'_m-\frac{s}{5N},r'_m-\frac{r}{5N},0,0)$, and $U_{\alpha}(\theta), V_{\beta}(\theta)\in C^{\omega,\infty}(D(s'_m-\frac{s}{5N},0,0,0))$, and
\begin{equation}\label{fc2-23}
\|\sum_{|\alpha|\geq m+1}U_{\alpha}(\theta)\rho^{\alpha}\|_{D(s'_m-\frac{s}{5N},r'_m-\frac{r}{5N},0,0),0}\leq Cr^{m+1}\ll s,
\end{equation}
\begin{equation}\label{fc2-24}
\|\sum_{|\beta|\geq m+2}V_{\beta}(\theta)\rho^{\beta}\|_{D(s'_m-\frac{s}{5N},r'_m-\frac{r}{5N},0,0),0}\leq Cr^{m+2}\ll r,
\end{equation}
\begin{eqnarray}\label{fc2-25}
 \nonumber\tilde{\Psi}_{m+1}(D(s'_{m+1},r'_{m+1},0,0))&\subset&\nonumber \tilde{\Psi}_{m+1}(D(s'_{m}-\frac{s}{5N},r'_{m}-\frac{r}{5N},0,0))\\
&\subset&D(s'_{m},r'_{m},0,0).
\end{eqnarray}

Noting $d_0=0$ and by (\ref{fc2-18}), (\ref{fc2-20}), (\ref{fc2-21}), (\ref{fc2-21-1}), (\ref{fc2-22})-(\ref{fc2-24}), we can rewrite (\ref{fb2-1}) and  (\ref{fb2-2}) as
 \begin{equation}\label{fc2-30}
\begin{cases}
\dot{\theta}=\omega_0+\sum_{0\leq|\alpha|\leq m+1}d_{\alpha}\rho^{\alpha}+\sum_{|\alpha|\geq m+2}f_{\alpha}^{(m+1)}(\theta)\rho^{\alpha},\\
\dot{\rho}=\sum_{|\beta|\geq m+3}g_{\beta}^{(m+1)}(\theta)\rho^{\beta},
\end{cases}
\end{equation}
where $d_{\alpha}$ are constants, $d_0=0$, $f_{\alpha}^{(m+1)}(\theta), g_{\beta}^{(m+1)}(\theta)\in C^{\omega,\infty}(D(s'_{m+1},0,0,0))$ and  $$\|\sum_{|\alpha|\geq m+2}f_{\alpha}^{(m+1)}(\theta)\rho^{\alpha}\|_{D(s'_{m+1},r'_{m+1},0,0),0}\leq Cr^{m+2},$$
$$\|\sum_{|\beta|\geq m+3}g_{\beta}^{(m+1)}(\theta)\rho^{\beta}\|_{D(s'_{m+1},r'_{m+1},0,0),0}\leq Cr^{m+3}.$$
Now let us consider the reversibility of the changed system. Noting $f_{\alpha}^{(m)}(-x)=f_{\alpha}^{(m)}(x)$ ($\forall|\alpha|=m+1$), $g_{\beta}^{(m)}(-x)=-g_{\beta}^{(m)}(x)$ ($\forall|\beta|=m+2$) and by (\ref{fc2-17}), (\ref{fc2-19}), we have
\begin{equation}\label{fc2-26}
u_{\alpha}(-x)=-u_{\alpha}(x),\ \ v_{\beta}(-x)=v_{\beta}(x), \ \forall|\alpha|=m+1, \ \forall|\beta|=m+2.
\end{equation}
Then by $\tilde{\Phi}_{m+1}\circ\tilde{\Psi}_{m+1}=id$, we get
\begin{equation}\label{fc2-27}
\sum_{|\alpha|=m+1}u_{\alpha}(x)y^{\alpha}+\Sigma_{|\alpha|\geq m+1}U_{\alpha}(x+\sum_{|\alpha|=m+1}u_{\alpha}(x)y^{\alpha})\cdot(y+\sum_{|\beta|=m+2}v_{\beta}(x)y^{\beta})^{\alpha}=0,
\end{equation}
\begin{equation}\label{fc2-28}
\sum_{|\beta|=m+2}v_{\beta}(x)y^{\beta}+\Sigma_{|\beta|\geq m+2}V_{\beta}(x+\sum_{|\alpha|=m+1}u_{\alpha}(x)y^{\alpha})\cdot(y+\sum_{|\beta|=m+2}v_{\beta}(x)y^{\beta})^{\beta}=0.
\end{equation}
By (\ref{fc2-26}), (\ref{fc2-27}) and (\ref{fc2-28}), we have
\begin{equation}\label{fc2-31}
U_{\alpha}(-\theta)=-U_{\alpha}(\theta),\ V_{\beta}(-\theta)=V_{\beta}(\theta),\ \forall|\alpha|\geq m+1, \ \forall|\beta|\geq m+2,
\end{equation}
where $\theta\in D(s'_m-\frac{s}{5N},0,0,0)$. It follows that
\begin{equation}\label{fc2-32}
f_{\alpha}^{(m+1)}(-\theta)=f_{\alpha}^{(m+1)}(\theta),\ g_{\beta}^{(m+1)}(-\theta)=-g_{\beta}^{(m+1)}(\theta),\ \forall|\alpha|\geq m+2, \ \forall|\beta|\geq m+3,
\end{equation}
where $\theta\in D(s'_{m+1},0,0,0$).

The proof of Lemma 2.1 is finished by (\ref{fc2-25}), (\ref{fc2-30}) and (\ref{fc2-32}).
 \end{proof}
 By Lemma \ref{lem2-1}, we have that $\tilde{\Psi}:=\tilde{\Psi}_1\circ\cdots\circ\tilde{\Psi}_N:D(\frac{s}{2},\frac{r}{2},0,0)\rightarrow D(s,r,0,0),$ and that (\ref{fc2-1}) is transformed by $\tilde{\Phi}:=\tilde{\Psi}^{-1}$ into
\begin{equation}\label{fc2-2}
\begin{cases}
\dot{\phi}=\omega_0+\sum_{1\leq|\alpha|\leq N}d_{\alpha}\mu^{\alpha}+\sum_{|\alpha|\geq N+1}f_{\alpha}^{(N)}(\phi)\mu^{\alpha},\\
\dot{\mu}=\sum_{|\beta|\geq N+2}g_{\beta}^{(N)}(\phi)\mu^{\beta},
\end{cases}
\end{equation}
where $d_{\alpha}$ are constants, $f_{\alpha}^{(N)}(\phi), g_{\beta}^{(N)}(\phi)\in C^{\omega,\infty}(D(\frac{s}{2},0,0,0))$, $f_{\alpha}^{(N)}(-\phi)=f_{\alpha}^{(N)}(\phi)$, $g_{\beta}^{(N)}(-\phi)=-g_{\beta}^{(N)}(\phi)$ and
$$\|\sum_{|\alpha|\geq N+1}f_{\alpha}^{(N)}(\phi)\mu^{\alpha}\|_{D(\frac{s}{2},\frac{r}{2},0,0),0}\leq Cr^{N+1},\ \ \  \|\sum_{|\beta|\geq N+2}g_{\beta}^{(N)}(\phi)\mu^{\beta}\|_{D(\frac{s}{2},\frac{r}{2},0,0),0}\leq Cr^{N+2}.$$

Let $\frac{s}{2}=s_0$, $\frac{r}{2}=2a$. And take $b>0$. For example, we can let $b=1$. Now we introduce parameter vector $\xi\in B(a)$ by
$$\mu=\xi+y, \ \xi\in B(a), \ y\in B_{\mathbf{C}}(r_0), \ r_0=a.$$
By replacing $\phi$ with $x$, we can rewrite (\ref{fc2-2}) as
\begin{equation}\label{fc2-33}
\begin{cases}
\dot{x}=\omega_0+\Lambda_{(00)}(\xi)+f_0(x,\xi)+\sum_{|\alpha|\geq1}f_{\alpha}(x,\xi)y^{\alpha},\\
\dot{y}=g_0(x,\xi)+g_1(x,\xi)\cdot y+\sum_{|\beta|\geq2}g_{\beta}(x,\xi)y^{\beta},
\end{cases}
\end{equation}
where $(x,y,\xi)\in D(s_0,r_0,a,b)$, $g_1(x,\xi)\cdot y:=\sum_{|\beta|=1}g_{\beta}(x,\xi)y^{\beta}$ (In the following, the definitions are similar, for example, $f_1(x,\xi)\cdot y:=\sum_{|\alpha|=1}f_{\alpha}(x,\xi)y^{\alpha}$ and so on), and $\Lambda_{(00)}(\xi)$, $f_0(x,\xi)$, $g_0(x,\xi)$, $g_1(x,\xi)$, $f_{\alpha}(x,\xi)$ ($|\alpha|\geq1$), $g_{\beta}(x,\xi)$ ($|\beta|\geq2$) satisfy the following lemma:
\begin{lem}\label{lem2-2} (0) The function $\Lambda_{(00)}(\xi)\in C^{\omega,\infty}(D(0,0,a,0))$ and
$$\Lambda_{(00)}=\sum_{1\leq|\alpha|\leq N}d_{\alpha}\xi^{\alpha}, \Lambda_{(00)}=O^1(\xi), \ \|\Lambda_{(00)}(\xi)\|_{D(0,0,a,0),\ell}\leq Ca;$$
(1) The function $f_0(x,\xi)\in C^{\omega,\infty}(D(s_0,0,a,0))$, $f_0(-x,\xi)=f_0(x,\xi)$ and
$$f_0(x,\xi)=O^{N+1}(\xi), \ \|f_0(x,\xi)\|_{D(s_0,0,a,0),\ell}\leq Ca^{N+1};$$
(2) The function $g_0(x,\xi)\in C^{\omega,\infty}(D(s_0,0,a,0))$, $g_0(-x,\xi)=-g_0(x,\xi)$ and
$$g_0(x,\xi)=O^{N+1}(\xi),\ \|g_0(x,\xi)\|_{D(s_0,0,a,0),\ell}\leq Ca^{N+2};$$
(3) The function $g_1(x,\xi)\in C^{\omega,\infty}(D(s_0,0,a,0))$, $g_1(-x,\xi)=-g_1(x,\xi)$ and
$$g_1(x,\xi)=O^{N+1}(\xi),\ \|g_1(x,\xi)\|_{D(s_0,0,a,0),\ell}\leq Ca^{N+1};$$
(4) The function $f_{\alpha}(x,\xi)\in C^{\omega,\infty}(D(s_0,0,a,0))$, $f_{\alpha}(-x,\xi)=f_{\alpha}(x,\xi)$, $\forall|\alpha|\geq1$ and
$$\|\sum_{|\alpha|\geq1}f_{\alpha}(x,\xi)y^{\alpha}\|_{D(s_0,r_0,a,0),\ell}\leq Cr_0.$$
(5) The function $g_{\beta}(x,\xi)\in C^{\omega,\infty}(D(s_0,0,a,0))$, $g_{\beta}(-x,\xi)=-g_{\beta}(x,\xi)$, $\forall|\beta|\geq2$ and
$$\|\sum_{|\beta|\geq2}g_{\beta}(x,\xi)y^{\beta}\|_{D(s_0,r_0,a,0),\ell}\leq Cr_0^{N+2}\leq Cr_0^2.$$
 \end{lem}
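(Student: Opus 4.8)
The plan is to treat the lemma as a bookkeeping statement: substitute $\mu=\xi+y$ into the normalized system (\ref{fc2-2}), expand every monomial $\mu^{\alpha}=(\xi+y)^{\alpha}$ by the multinomial theorem, and regroup the resulting series by total degree in $y$ so as to arrive at the form (\ref{fc2-33}). Write the right-hand side of the $\dot\phi$-equation (minus $\omega_0$) as $\mathrm{poly}(\mu)+F(x,\mu)$ with $\mathrm{poly}(\mu)=\sum_{1\le|\alpha|\le N}d_{\alpha}\mu^{\alpha}$ and $F(x,\mu)=\sum_{|\alpha|\ge N+1}f_{\alpha}^{(N)}(x)\mu^{\alpha}$, and write the $\dot\mu$-equation as $G(x,\mu)=\sum_{|\beta|\ge N+2}g_{\beta}^{(N)}(x)\mu^{\beta}$; note that $\dot\mu=\dot y$ because $\xi$ is a frozen parameter. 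Regrouping then yields the explicit identifications
\begin{equation*}
\Lambda_{(00)}(\xi)=\mathrm{poly}(\xi),\quad f_0(x,\xi)=F(x,\xi),\quad g_0(x,\xi)=G(x,\xi),\quad g_1(x,\xi)=\partial_{\mu}G(x,\xi),
\end{equation*}
the last being the Jacobian supplying the $y$-linear term, together with the genuinely $y$-dependent remainders
\begin{equation*}
\sum_{|\alpha|\ge1}f_{\alpha}(x,\xi)y^{\alpha}=\bigl[\mathrm{poly}(\xi+y)-\mathrm{poly}(\xi)\bigr]+\bigl[F(x,\xi+y)-F(x,\xi)\bigr],
\end{equation*}
\begin{equation*}
\sum_{|\beta|\ge2}g_{\beta}(x,\xi)y^{\beta}=G(x,\xi+y)-g_0(x,\xi)-g_1(x,\xi)\cdot y.
\end{equation*}

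Once these identifications are in hand the qualitative assertions are immediate. Each coefficient is independent of $\eta$, so it lies in $C^{\omega,\infty}(D(s_0,0,a,0))$ and its $\ell$-norm reduces to the supremum norm; analyticity in $x\in\mathbf{T}_{s_0}^d$ and in $\xi\in B(a)$ is inherited from the analyticity of the $f_{\alpha}^{(N)},g_{\beta}^{(N)}$ together with the real-analyticity of the substitution $\mu=\xi+y$. The reversibility relations $f_{\alpha}^{(N)}(-x)=f_{\alpha}^{(N)}(x)$ and $g_{\beta}^{(N)}(-x)=-g_{\beta}^{(N)}(x)$ from Lemma~\ref{lem2-1} pass through the $x$-independent substitution unchanged, so every $f$-coefficient is even and every $g$-coefficient is odd in $x$ (with $\Lambda_{(00)}$ being $x$-free). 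The vanishing orders in $\xi$ are read off the lowest appearing degree: $\Lambda_{(00)}=O^1(\xi)$ since $d_0=0$, $f_0=O^{N+1}(\xi)$, $g_0=O^{N+2}(\xi)$ hence a fortiori $O^{N+1}(\xi)$, and $g_1=\partial_{\mu}G(x,\xi)=O^{N+1}(\xi)$.

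For the norm estimates the crucial point is the choice of radii: since $\tfrac r2=2a$ and $r_0=a$, every $(x,y,\xi)\in D(s_0,r_0,a,0)$ satisfies $|\mu|=|\xi+y|\le|\xi|+|y|<2a=\tfrac r2$, so $\mu$ stays strictly inside the ball on which the bounds of (\ref{fc2-2}) hold. Because $r=4a$, powers convert as $r^{k}=4^{k}a^{k}\le Ca^{k}$ ($N$ fixed); hence restricting $F$ and $G$ to this subdomain gives at once $\|f_0\|\le\|F\|\le Cr^{N+1}\le Ca^{N+1}$ and $\|g_0\|\le\|G\|\le Cr^{N+2}\le Ca^{N+2}$, and the bound $\|g_1\|\le Ca^{N+1}$ follows from a single Cauchy estimate for $\partial_{\mu}G$ on a ball of radius $a/2$ about $\xi$. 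For $\Lambda_{(00)}$ one needs $|d_{\alpha}|\le C$, which is supplied by Lemma~\ref{lem2-1}: extracting the leading homogeneous component of the tail bound there and applying a Cauchy estimate in $y$ shows $|d_{\alpha}|=|\hat f_{|\alpha|}^{(|\alpha|-1)}(0)|\le C$, whence $\|\Lambda_{(00)}\|\le Ca$ on $|\xi|<a$.

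The step I expect to require the most care — and the one place where a naive argument fails — is bounding the two $y$-dependent remainders (claims (4) and (5)). A term-by-term estimate of $\sum_{|\beta|\ge2}g_{\beta}y^{\beta}$ through Cauchy bounds on the individual coefficients $g_{\beta}=\tfrac1{\beta!}\partial_{\mu}^{\beta}G(x,\xi)$ diverges, since the geometric sum $\sum_{\beta}2^{|\beta|}$ blows up once $|y|$ reaches the full radius $a$. Instead I would use the difference representation above: each of $G(x,\xi+y)$, $g_0$ and $g_1\cdot y$ is separately bounded by $Ca^{N+2}$ on $D(s_0,r_0,a,0)$ (the last using $\|g_1\|\le Ca^{N+1}$ and $|y|<a$), so $\sum_{|\beta|\ge2}g_{\beta}y^{\beta}=G(x,\xi+y)-g_0-g_1\cdot y$ is bounded by $Ca^{N+2}=Cr_0^{N+2}$. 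The same subtraction gives claim (4): the tail part contributes $O(a^{N+1})$ while the polynomial part contributes its linear-in-$y$ piece $\sum_{|\alpha|=1}d_{\alpha}y^{\alpha}$ of size $\sim a$, so the dominant term is of order $a=r_0$, which is exactly why (4) carries the weaker bound $Cr_0$ rather than a high power of $a$. Throughout one only restricts holomorphic functions to a smaller domain and subtracts explicitly identified lower-order-in-$y$ pieces, so no convergence difficulty beyond the radius bookkeeping arises.
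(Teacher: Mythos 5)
Your proof is correct and takes essentially the same route the paper does implicitly: the paper states Lemma \ref{lem2-2} without any proof, treating it as pure bookkeeping for the substitution $\mu=\xi+y$ into (\ref{fc2-2}) with the radii $s_0=\tfrac s2$, $r_0=a$, $\tfrac r2=2a$. Your write-up supplies precisely the details the paper omits — the identifications $\Lambda_{(00)}=\mathrm{poly}(\xi)$, $f_0=F(x,\xi)$, $g_0=G(x,\xi)$, $g_1=\partial_\mu G(x,\xi)$, the conversion $r^k=4^ka^k\leq Ca^k$ for fixed $N$, the Cauchy estimates giving $|d_\alpha|\leq C$ and $\|g_1\|\leq Ca^{N+1}$, and the subtraction representation in claims (4)--(5) that correctly avoids the divergent term-by-term Cauchy summation near $|y|=r_0$ — and all of these steps are sound.
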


 Similar to Section 3.1 in \cite{a2}, there exists the BNF of (\ref{fc1-3}):
 \begin{lem}\label{lem2-3} There exists $$\Psi:x=\phi+\sum_{|\alpha|\geq1}\bar{u}_{\alpha}(\phi)\mu^{\alpha},\
y=\mu+\sum_{|\beta|\geq2}\bar{v}_{\beta}(\phi)\mu^{\beta},$$
where $\bar{u}_{\alpha}(\phi), \bar{v}_{\beta}(\phi)\in C^{\omega,\infty}(D(s,0,0,0))$ such that system {\rm(\ref{fc1-3})} is changed by $\Phi=\Psi^{-1}$ into
 \begin{equation}\label{fb2-100}
\begin{cases}
\dot{\phi}=f_F(\mu),\\
\dot{\mu}=0,
\end{cases}
\end{equation}
where $f_F=\omega_0+\sum_{|\alpha|\geq1}d_{\alpha}\mu^{\alpha}$, $d_{\alpha}$ are constants. Moreover, any $d_{\alpha}$ is unique.
\end{lem}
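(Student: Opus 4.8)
The plan is to construct $\Psi$ as a \emph{formal} power series in $\mu$ by iterating the single normalization step of Lemma~\ref{lem2-1} over all orders $m=0,1,2,\dots$ and passing to the limit formally. Since we only seek a formal series (no convergence in $\mu$ is claimed), at each order I would retain the full open strip $\mathbf{T}_s^d$ as the analyticity domain of the coefficients, so that the width loss $\tfrac{s}{20N}$ appearing in the quantitative estimates of Lemma~\ref{lem2-1} is irrelevant here: solving a homological equation $\omega_0\cdot\partial_\phi u = h-\hat{h}(0)$ with $h\in C^{\omega,\infty}(D(s,0,0,0))$ produces, via the Diophantine divisors, Fourier coefficients decaying like $|k|^{\tau_0}e^{-|k|s}$, hence a solution again analytic on the open strip of width $s$.

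Concretely, I would seek $\Psi$ in the stated form $x=\phi+\sum_{|\alpha|\geq1}\bar{u}_\alpha(\phi)\mu^\alpha$, $y=\mu+\sum_{|\beta|\geq2}\bar{v}_\beta(\phi)\mu^\beta$, and determine the coefficients $\bar{u}_\alpha,\bar{v}_\beta$ together with the constants $d_\alpha$ inductively on $|\alpha|$. Substituting into~(\ref{fc1-3}) and demanding $\dot\phi=f_F(\mu)$, $\dot\mu=0$ gives at each order exactly the homological equations~(\ref{fc2-14})--(\ref{fc2-15}), with right-hand data that are polynomial expressions in the previously fixed coefficients. One sets $d_{m+1}=\hat{f}_{m+1}^{(m)}(0)$ as in~(\ref{fc2-29}) to remove the obstruction in the $u$-equation, while solvability of the $v$-equation rests on the reversibility identity~(\ref{fc2-13}), $\hat{g}_{m+2}^{(m)}(0)=0$; the Diophantine condition then yields the solutions~(\ref{fc2-17}) and~(\ref{fc2-19}). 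By the parity computation~(\ref{fc2-26}) each $u_{m+1}$ is odd and each $v_{m+1}$ is even, so the reversible structure is reproduced at the next order and the induction closes. The formal transformation is then $\Psi=\tilde{\Psi}_1\circ\tilde{\Psi}_2\circ\cdots$; since $\tilde{\Psi}_j$ alters only terms of order $\geq j$, the composition stabilizes at every fixed order, so $\Psi$ is a well-defined formal series with coefficients in $C^{\omega,\infty}(D(s,0,0,0))$, and $f_F=\omega_0+\sum_{|\alpha|\geq1}d_\alpha\mu^\alpha$.

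For uniqueness of the $d_\alpha$, I would argue that nothing in the construction is arbitrary. Equation~(\ref{fc2-16}) determines $\hat{u}_{m+1}(k)$ for all $k\neq0$, and the oddness of $u_{m+1}$--forced by $f_{m+1}^{(m)}$ being even--pins the remaining mode $\hat{u}_{m+1}(0)=0$; likewise $v_{m+1}$ is determined up to its average, which~(\ref{fc2-19}) fixes to zero, consistent with evenness. Hence at each order the datum feeding the $\phi$-average depends only on the already-determined lower-order coefficients, and $d_{m+1}=\hat{f}_{m+1}^{(m)}(0)$ is forced. To make this rigorous I would take two normalizing series $\Psi_1,\Psi_2$ with normal forms $f_F^{(1)},f_F^{(2)}$, consider the formal conjugacy $\Xi=\Psi_2^{-1}\circ\Psi_1$ between the two normal forms (both with $\dot\mu=0$), and compare coefficients order by order to conclude $f_F^{(1)}=f_F^{(2)}$.

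The main obstacle I expect is the uniqueness step rather than the existence step: one must control the gauge freedom in the near-identity transformation (the kernel of $\omega_0\cdot\partial_\phi$, i.e.\ the $\phi$-independent additions to $\bar{u}_\alpha,\bar{v}_\beta$) and verify that, once the reversibility parities are imposed, this freedom cannot alter the normal-form constants $d_\alpha$ at any order. Everything else--solvability of the homological equations and the formal stabilization of the composition--follows directly from the Diophantine condition and from the reversibility bookkeeping already carried out in the proof of Lemma~\ref{lem2-1}.
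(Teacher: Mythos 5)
Your existence argument is correct and is essentially the paper's own proof: the paper disposes of Lemma \ref{lem2-3} in Remark \ref{rem2-4} precisely by iterating the homological-equation step (\ref{fc2-14})--(\ref{fc2-15}) of Lemma \ref{lem2-1} formally to all orders, ``not considering the convergence''. Your observation that the width loss $\frac{s}{20N}$ is irrelevant for a formal series is also right: for each coefficient the small-divisor factor $|k|^{\tau_0}$ is absorbed by passing through any intermediate width $s'<s$, so every $\bar{u}_{\alpha},\bar{v}_{\beta}$ stays analytic on the full open strip, i.e.\ lies in $C^{\omega,\infty}(D(s,0,0,0))$.

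The genuine gap is in the uniqueness half, exactly at the point you flag, and the resolution you anticipate (``once the reversibility parities are imposed, this freedom cannot alter the $d_{\alpha}$'') is false. Oddness does pin $\hat{\bar{u}}_{\alpha}(0)=0$, but a constant is an \emph{even} function, so evenness of $\bar{v}_{\beta}$ is compatible with an arbitrary average $\hat{\bar{v}}_{\beta}(0)=c$, and this freedom genuinely changes the normal form: replacing $\Psi$ by $\Psi\circ\iota$ with $\iota(\phi,\mu)=(\phi,\mu+c\mu^{\beta})$, $|\beta|\geq2$, gives another transformation of exactly the form stated in the lemma (and still commuting with $G$), under which (\ref{fb2-100}) becomes $\dot{\phi}=f_F(\mu+c\mu^{\beta})$, $\dot{\mu}=0$; the coefficient of $\mu^{\beta}$ shifts by $Df_F(0)c\neq0$ whenever the linear part of $f_F$ does not annihilate $c$. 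Concretely, for $\dot{x}=\omega_0+y$, $\dot{y}=0$, which is a legitimate instance of (\ref{fc1-3}) already in normal form, both $\omega_0+\mu$ and $\omega_0+\mu+c\mu^{\beta}$ arise as ``the'' normal form within the stated class, so the $d_{\alpha}$ with $|\alpha|\geq2$ are not unique in that class. Uniqueness holds only after adjoining the normalization $\hat{\bar{v}}_{\beta}(0)=0$ to the admissible transformations; this is what the paper does implicitly — the construction of Remark \ref{rem2-4} produces $\hat{\bar{v}}_{\beta}(0)=0$, and the paper's detailed uniqueness argument (its parametrized analogue, Lemma \ref{lem7-1}) builds $\hat{v}(0,\rho,\xi)=0$ into the hypotheses and invokes it to make each $v_{[m+1]}$ in (\ref{fc7-15}) unique modulo $O^1(\rho)$, whence the forcing of each normal-form coefficient. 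To close your proof you should therefore impose $\hat{\bar{v}}_{\beta}(0)=0$ (alongside oddness of $\bar{u}_{\alpha}$) and run the direct order-by-order forcing argument of Lemma \ref{lem7-1}; your comparison of two normalizations via $\Xi=\Psi_2^{-1}\circ\Psi_1$ can then be made to work, but parity alone will never close it.
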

\begin{rem}\label{rem2-4} The unique (\ref{fb2-100}) is the BNF of (\ref{fc1-3}) around $\Gamma_0$. Not considering the convergence of $\sum_{|\alpha|\geq1}\bar{u}_{\alpha}(\phi)\mu^{\alpha}$, $\sum_{|\beta|\geq2}\bar{v}_{\beta}(\phi)\mu^{\beta}$ and $f_F(\mu)$, we can prove Lemma \ref{lem2-3} easily by using the method of solving (\ref{fc2-14}) and (\ref{fc2-15}). By (\ref{fc2-17}) and (\ref{fc2-19}), we have that $\bar{u}_{\alpha}(-\phi)=-\bar{u}_{\alpha}(\phi)$, $\bar{v}_{\beta}(-\phi)=\bar{v}_{\beta}(\phi)$, $\hat{\bar{v}}_{\beta}(0)=0$, $\forall|\alpha|\geq1$, $\forall|\beta|\geq2$. Moreover, $d_{\alpha}$ ($|\alpha|\leq N$) in Lemma \ref{lem2-1} are the same as them in Lemma \ref{lem2-3}.
\end{rem}
\section{Flat function and Flat operator}\label{sec3}
In this section, we introduce the flat function and flat operator, which will be used in Section 5.
\begin{defn}\label{defn3-1} \cite{a2}   We call that $f\in C^{\omega,\infty}(D(s,r,a,b))$ is a flat function on $DC(\gamma,\tau)$, if
$$\partial_{x}^{\alpha}\partial_{y}^{\beta}\partial_{\xi}^{p}\partial_{\eta}^{q}f(x,y,\xi,\eta)=0, \ (x,y,\xi,\eta)\in D(s,r,a,b)$$
for all multi-indices $\alpha$, $\beta$, $p$, $q$ whenever $\eta\in DC(\gamma,\tau)$. For simplicity, we call $f$ is $(\gamma,\tau)$-flat when $f$ is a flat function on $DC(\gamma,\tau)$.
\end{defn}

Let $\tilde{l}: \mathbf{R}\rightarrow\mathbf{R}$ denote a fixed non-negative $C^{\infty}$ function such that $|\tilde{l}|\leq1$, and $\tilde{l}=0$ if $|x|\geq\frac{1}{2}$, and $\tilde{l}=1$ if $|x|\leq\frac{1}{4}$. As in \cite{a2}, for $0<\gamma<1$, $\tau>d-1$ we define a cut-off operator (flat operator) $P_{\gamma,\tau}$
\begin{equation}\label{fb3-20}
(P_{\gamma,\tau}f)(x,y,\xi,\eta)=\sum_{k\in\mathbf{Z}^{d}/ \left\{0 \right\}}\hat{f}(k,y,\xi,\eta)e^{i\langle k,x\rangle}\tilde{l}(\langle k,\eta\rangle\frac{|k|^{\tau}}{\gamma})
\end{equation}
for any $f\in C^{\omega,\infty}(D(s,r,a,b))$.
\begin{lem}\label{lem3-2} For any $s'<s$, we have
$$\|P_{\gamma,\tau}f\|_{D(s',r,a,b),\ell}\leq \frac{C_{\ell}}{\gamma^{\ell}}(\frac{1}{s-s'})^{(\tau+1)\ell+d}\|f\|_{D(s,r,a,b),\ell},$$
where $C_{\ell}$ is a constant depending on only $\ell$, $\tau$, $d$ and $\tilde{l}$.
\end{lem}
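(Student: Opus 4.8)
The plan is to differentiate the series (\ref{fb3-20}) term by term in $\eta$ up to order $\ell$, bound each summand by Leibniz's rule, and then sum an exponentially decaying series over $k$ exactly as was already done in (\ref{fc2-18}). Fix a multi-index $\alpha$ with $|\alpha|\leq\ell$ and a point $(x,y,\xi,\eta)\in D(s',r,a,b)$, and abbreviate $u_k(\eta)=\langle k,\eta\rangle|k|^\tau/\gamma$. Applying $\partial_\eta^\alpha$ to (\ref{fb3-20}) and using Leibniz's rule gives
$$\partial_\eta^\alpha(P_{\gamma,\tau}f)=\sum_{k\neq0}e^{i\langle k,x\rangle}\sum_{\beta\leq\alpha}\binom{\alpha}{\beta}\partial_\eta^\beta\hat{f}(k,y,\xi,\eta)\,\partial_\eta^{\alpha-\beta}[\tilde{l}(u_k(\eta))],$$
so it suffices to control the three factors $e^{i\langle k,x\rangle}$, $\partial_\eta^\beta\hat{f}(k,\cdot)$ and $\partial_\eta^{\alpha-\beta}[\tilde{l}(u_k)]$, and then the sum over $k$.

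For the exponential factors, note that $x\in\mathbf{T}^d_{s'}$ gives $|e^{i\langle k,x\rangle}|\leq e^{|k|s'}$, while the real-analyticity of $f$ in $x$ on the wider strip $\mathbf{T}^d_s$ allows the contour shift underlying the Fourier decay estimate; since $\partial_\eta^\beta$ commutes with the $x$-Fourier transform, this yields
$$|\partial_\eta^\beta\hat{f}(k,y,\xi,\eta)|=|\widehat{\partial_\eta^\beta f}(k,y,\xi,\eta)|\leq\|f\|_{D(s,r,a,b),\ell}\,e^{-|k|s}$$
for every $|\beta|\leq\ell$, exactly the bound used in (\ref{fc2-18}). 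The product of these two factors is the net decay $e^{-|k|(s-s')}$, and this is the only place where the hypothesis $s'<s$ enters.

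For the cut-off factor I use that $u_k$ is linear in $\eta$ with $\partial_{\eta_j}u_k=k_j|k|^\tau/\gamma$, so the chain rule gives
$$\partial_\eta^{\alpha-\beta}[\tilde{l}(u_k)]=\tilde{l}^{(|\alpha-\beta|)}(u_k)\prod_{j=1}^d\left(\frac{k_j|k|^\tau}{\gamma}\right)^{(\alpha-\beta)_j}.$$
Bounding each derivative of $\tilde{l}$ by a constant depending only on $\ell$ and $\tilde{l}$ (only the boundedness of the derivatives of $\tilde{l}$, not its support, is needed here), and using $|k_j|\leq|k|$, this is at most $C_\ell(|k|^{\tau+1}/\gamma)^{|\alpha-\beta|}$; since $|k|\geq1$ and $0<\gamma<1$ force $|k|^{\tau+1}/\gamma\geq1$, it is in turn at most $C_\ell\gamma^{-\ell}|k|^{(\tau+1)\ell}$. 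Thus each admissible $\eta$-derivative costs exactly one factor $|k|^{\tau+1}/\gamma$, which is the source of both the $\gamma^{-\ell}$ and the exponent $(\tau+1)\ell$ in the conclusion.

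Combining the three bounds and absorbing the binomial sum $\sum_{\beta\leq\alpha}\binom{\alpha}{\beta}\leq2^\ell$ into $C_\ell$, I obtain
$$|\partial_\eta^\alpha(P_{\gamma,\tau}f)(x,y,\xi,\eta)|\leq\frac{C_\ell}{\gamma^\ell}\|f\|_{D(s,r,a,b),\ell}\sum_{k\neq0}|k|^{(\tau+1)\ell}e^{-|k|(s-s')}.$$
The remaining point is the elementary lattice estimate $\sum_{k\in\mathbf{Z}^d\setminus\{0\}}|k|^p e^{-|k|\sigma}\leq C_{p,d}\,\sigma^{-(p+d)}$ (for $p\geq0$, $\sigma>0$), proved by grouping the $\sim n^{d-1}$ lattice points of size $n$ and comparing with $\int_0^\infty n^{p+d-1}e^{-n\sigma}\,dn$; this is the same type of bound already invoked in (\ref{fc2-18}). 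Taking $p=(\tau+1)\ell$ and $\sigma=s-s'$ produces the factor $(s-s')^{-((\tau+1)\ell+d)}$, and taking the supremum over $|\alpha|\leq\ell$ and over $D(s',r,a,b)$ yields the claim. I do not expect a genuine obstacle here: every step is routine, and the only real care is bookkeeping, namely keeping the polynomial growth $|k|^{(\tau+1)\ell}$ from the cut-off aligned with the exponential decay $e^{-|k|(s-s')}$ so that the series converges with precisely the stated power of $s-s'$.
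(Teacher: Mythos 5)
Your proposal is correct and follows essentially the same route as the paper's proof: termwise Fourier decay $e^{-|k|s}$ from analyticity in the wider strip, the bound $\|\tilde{l}_k\|_{\ell}\leq C_\ell\,|k|^{(\tau+1)\ell}\gamma^{-\ell}$ on the cut-off factor (which you derive explicitly via the chain rule for the linear argument $u_k$), a Leibniz/product estimate to combine them, and the lattice sum $\sum_{k\neq 0}|k|^{(\tau+1)\ell}e^{-|k|(s-s')}\leq C(s-s')^{-((\tau+1)\ell+d)}$. The only difference is that you write out the Leibniz expansion and chain rule explicitly where the paper compresses them into norm inequalities; the substance is identical.
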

 \begin{proof} The proof is essentially the same as that of \cite{a2}. The function $f=f(x,y,\xi,\eta)$ is analytic in $D(s,r,a,b)$, so the Fourier coefficients verify
 $$\|\hat{f}(k,y,\xi,\eta)\|_{D(0,r,a,b),\ell}\leq\|f\|_{D(s,r,a,b),\ell}e^{-|k|s}, \ \forall k\in\mathbf{Z}^{d}.$$
 The functions
 $$\tilde{l}_{k}(\eta):=\tilde{l}(\langle k,\eta\rangle\frac{|k|^{\tau}}{\gamma})$$
 verify
 $$\|\tilde{l}_{k}\|_{D(0,0,0,b),\ell}\leq\frac{|k|^{(\tau+1)\ell}}{\gamma^{\ell}}\|\tilde{l}\|_{B(\infty),\ell},$$
where $\|\tilde{l}\|_{B(\infty),\ell}:=\sup_{x\in \mathbf{R},p\leq\ell}|\partial_{x}^{p}\tilde{l}(x)|$ which is a constant depending only on $\tilde{l}$ and $\ell$. Thus
\begin{eqnarray}
 \nonumber&&\|P_{\gamma,\tau}f\|_{D(s',r,a,b),\ell}\\&\leq&\nonumber C_{\ell}\sum_{k\in\mathbf{Z}^{d}/ \left\{0 \right\} }e^{|k|s'}(\|\hat{f}(k,y,\xi,\eta)\|_{D(0,r,a,b),\ell}\|\tilde{l}_{k}\|_{D(0,0,0,b),0}\\
&&+\nonumber\|\hat{f}(k,y,\xi,\eta)\|_{D(0,r,a,b),0}\|\tilde{l}_{k}\|_{D(0,0,0,b),\ell})\\
&\leq&\nonumber C_{\ell}\sum_{k\in\mathbf{Z}^{d}/ \left\{0 \right\} }e^{|k|s'}\frac{|k|^{(\tau+1)\ell}}{\gamma^{\ell}}\|\tilde{l}\|_{B(\infty),\ell}\|f\|_{D(s,r,a,b),\ell}e^{-|k|s}\\
&\leq&\nonumber \frac{C_{\ell}}{\gamma^{\ell}}\|f\|_{D(s,r,a,b),\ell}\sum_{k\in\mathbf{Z}^{d}/ \left\{0 \right\} }|k|^{(\tau+1)\ell} e^{-|k|(s-s')}\\
&\leq&\nonumber \frac{C_{\ell}}{\gamma^{\ell}}(\frac{1}{s-s'})^{(\tau+1)\ell+d}\|f\|_{D(s,r,a,b),\ell}.
\end{eqnarray}
  \end{proof}

\section{Iterative Lemma}\label{sec4}
\subsection{Initial data}
Let us return to (\ref{fc2-33}). By conclusion (2) of Lemma \ref{lem2-2}, we have
\begin{equation}\label{fb4-40}
\hat{g}_0(0,\xi)=0.
\end{equation}
It follows that the solution of (\ref{fb4-41}) exists (by solving (\ref{fc5-5}) with $m=0$).
 Let $v_1(x,\xi,\eta)=\mathcal{L}g_0(x,\xi)$ be the solution of (\ref{fb4-41}).
\begin{equation}\label{fb4-41}
(1-P_{\gamma,\tau})g_0(x,\xi)+\eta\cdot\partial_{x}v_1=0,\ \eta\in B(b),
\end{equation}
where $\eta=(\eta_1,\eta_2,\cdots,\eta_d)^{T}$, $\eta\cdot \partial_{x}=\sum_{j=1}^d\eta_j\partial_{x_j}$.

Let
\begin{equation}\label{fb4-42}
\Lambda_{(0)}=\Lambda_{(00)}+\bar{\Lambda}_{(0)},
\end{equation}
where
\begin{equation}\label{fb4-43}
\bar{\Lambda}_{(0)}(\xi,\eta)=(f_0-f_1\cdot \mathcal{L}g_0)^{\hat{}}(0,\xi,\eta).
\end{equation}
Let $\varepsilon_0=a^{N+1}$. By (\ref{fb4-43}) and (\ref{fc5-12}), (\ref{fb5-4}) with $m=0$ and by conclusions (1), (4) of Lemma \ref{lem2-2}, we have
\begin{equation}\label{fb4-44}
\bar{\Lambda}_{(0)}(\xi,\eta)\in C^{\omega,\infty}(D(0,0,a,b)),\ \bar{\Lambda}_{(0)}(\xi,\eta)=O^{N+1}(\xi),\ \|\bar{\Lambda}_{(0)}(\xi,\eta)\|_{D(0,0,a,b),\ell}\leq \frac{C\varepsilon_0}{\gamma^{\ell+1}}.
\end{equation}

Let $$f_{0}^{(0)}(x,\xi,\eta)=f_0(x,\xi)-\bar{\Lambda}_{(0)}(\xi,\eta), \ g_{0}^{(0)}(x,\xi,\eta)=g_0(x,\xi),\ g_{1}^{(0)}(x,\xi,\eta)=g_1(x,\xi),$$
 $$f_{\alpha}^{(0)}(x,\xi,\eta)=f_{\alpha}(x,\xi), \forall|\alpha|\geq1, \ \ \ \ g_{\beta}^{(0)}(x,\xi,\eta)=g_{\beta}(x,\xi),  \forall|\beta|\geq2,$$ $h_{(0,0)}=\mathbf{1}_{d\times d}$ be $d\times d$ unit matrix, $H_{(0,0)}=\mathbf{O}_{d\times d}$ be $d\times d$ zero matrix, $R^{(1)}_{(0,0)}=R^{(2)}_{(0,0)}=0. $

By replacing $(x,y)$ with $(\tilde{x},\tilde{y})$, we can rewrite (\ref{fc2-33}) as
\begin{equation}\label{fc4-1}
\begin{cases}
\dot{\tilde{x}}=\eta+h_{(0,0)}\cdot(\omega_0-\eta+\Lambda_{(0)}(\xi,\eta))+f_{0}^{(0)}(\tilde{x},\xi,\eta)+\sum_{|\alpha|\geq1}f_{\alpha}^{(0)}(\tilde{x},\xi,\eta)\tilde{y}^{\alpha}+R^{(1)}_{(0,0)},\\
\dot{\tilde{y}}=H_{(0,0)}\cdot(\omega_0-\eta+\Lambda_{(0)}(\xi,\eta))+g_{0}^{(0)}(\tilde{x},\xi,\eta)+g_{1}^{(0)}(\tilde{x},\xi,\eta) \tilde{y}+\sum_{|\beta|\geq2}g_{\beta}^{(0)}(\tilde{x},\xi,\eta)\tilde{y}^{\beta} \\ \ \ \  \ \
+R^{(2)}_{(0,0)},
\end{cases}
\end{equation}
where $\Lambda_{(0)}$, $h_{(0,0)}$, $H_{(0,0)}$, $f_{0}^{(0)}$, $f_{\alpha}^{(0)}$ ($|\alpha|\geq1$), $g_{0}^{(0)}$, $g_{1}^{(0)}$, $g_{\beta}^{(0)}$ ($|\beta|\geq2$), $R^{(1)}_{(0,0)}$ and $R^{(2)}_{(0,0)}$ fulfill conditions {\rm(\ref{fc4-3})}-{\rm(\ref{fb4-13})} with $m=0$.
\subsection{Iterative constants and domains}
$\bullet$ $m$ ($m=0,1,2,\cdots$) denotes the step of Newton iteration;

$\bullet$ $e_m=\frac{\sum_{l=1}^{m}l^{-2}}{2\sum_{l=1}^{\infty}l^{-2}}$, $m=1,2,\cdots$;

$\bullet$ $s_0=\frac{s}{2}>0$, $s_m=s_0(1-e_m)\geq\frac{s_0}{2}$, $m=1,2,\cdots$;

$\bullet$ $s^{(i)}_m=s_m-\frac{i}{10}(s_m-s_{m+1})$, $i=1,2,3,4,5,6$, $m=0,1,2,\cdots$;

$\bullet$ $r_0=a$, $r_m=r_0(1-e_m)\geq\frac{r_0}{2}$, $m=1,2,\cdots$;

$\bullet$ $r^{(1)}_m=r_m-\frac{1}{10}(r_m-r_{m+1})$, $m=0,1,2,\cdots$;

$\bullet$ $\varepsilon_0=a^{N+1}$, $\varepsilon_m=\varepsilon_0^{(\frac{4}{3})^m}$, $m=1,2,\cdots$;

$\bullet$ the operator $\mathcal{L}$ is defined by that $\mathcal{L}g(x,\xi,\eta)=v$ is the solution of equation $(1-P_{\gamma,\tau})g(x,\xi,\eta)+\eta\cdot\partial_{x}v=0$.

\subsection{Iterative Lemma}
Let $0<a^{\frac{N}{20(\ell+1)}}<\gamma<1$, $\tau>d-1$. Then we give the iterative lemma.
\begin{lem}\label{lem4-1}  Suppose that we have had $m+1$ {\rm(}$m=0,1,2,\cdots${\rm)} diffeomorphisms $\Phi_0=\Psi_0^{-1}=id$, $\Phi_1=\Psi_1^{-1}$, $\cdots$, $\Phi_m=\Psi_m^{-1}$ with
$$\Psi_j:D(s_j,r_j,a,b)\rightarrow D(s_{j-1},r_{j-1},a,b), \ j=1,2,\cdots,m$$
of the form
$$\Phi_j:x=\tilde{x}+u_j(\tilde{x},\xi,\eta), \ y=\tilde{y}+v_j(\tilde{x},\xi,\eta)+w_j(\tilde{x},\xi,\eta)\tilde{y},\ j=1,2,\cdots,m,$$
where $u_j, v_j, w_j\in C^{\omega,\infty}(D(s_j,0,a,b))$ and
$$\|u_j\|_{D(s_j,0,a,b),\ell},\ \|v_j\|_{D(s_j,0,a,b),\ell},\ \|w_j\|_{D(s_j,0,a,b),\ell}\leq \frac{C\varepsilon_{j-1}}{\gamma^{2\ell+2}}, \ j=1,2,\cdots,m$$
such that system {\rm(\ref{fc4-1})} is changed by $\Phi^{(m)}=\Phi_0\circ\cdots\Phi_m$ into
\begin{equation}\label{fc4-2}
\begin{cases}
\dot{x}=\eta+h_{(m)}\cdot(\omega_0-\eta+\Lambda_{(m)}(\xi,\eta))+f_{0}^{(m)}(x,\xi,\eta)+\sum_{|\alpha|\geq1}f_{\alpha}^{(m)}(x,\xi,\eta)y^{\alpha}+R^{(1)}_{(m)},\\
\dot{y}=H_{(m)}\cdot(\omega_0-\eta+\Lambda_{(m)}(\xi,\eta))+g_{0}^{(m)}(x,\xi,\eta)+g_{1}^{(m)}(x,\xi,\eta)y\\ \ \ \  \ \
+\sum_{|\beta|\geq2}g_{\beta}^{(m)}(x,\xi,\eta)y^{\beta}
+R^{(2)}_{(m)},
\end{cases}
\end{equation}
where
\begin{equation}\label{fc4-3}
\Lambda_{(j)}=\Lambda_{(00)}+\bar{\Lambda}_{(j)},\ \bar{\Lambda}_{(j)}\in C^{\omega,\infty}(D(0,0,a,b)),\ \bar{\Lambda}_{(j)}(\xi,\eta)=O^{N+1}(\xi),\ j=0,1,2,\cdots,m,
\end{equation}
\begin{equation}\label{fc4-4}
\|\bar{\Lambda}_{(0)}\|_{D(0,0,a,b),\ell}\leq \frac{C\varepsilon_0}{\gamma^{\ell+1}}, \ \|\bar{\Lambda}_{(j)}-\bar{\Lambda}_{(j-1)}\|_{D(0,0,a,b),\ell}\leq \frac{C\varepsilon_j}{\gamma^{\ell+1}},\ j=1,2,\cdots,m;
\end{equation}
\begin{equation}\label{fc4-5}
h_{(m)}=\sum_{j=0}^mh_{(j,m)}(x,\xi,\eta),\ h_{(0,m)}=\mathbf{1}_{d\times d},\ h_{(j,m)}\in C^{\omega,\infty}(D(s_m,0,a,b)),
\end{equation}
\begin{equation}\label{fc4-6}
h_{(j,m)}(-x,\xi,\eta)=h_{(j,m)}(x,\xi,\eta),
\end{equation}
\begin{equation}\label{fc4-7}
\|h_{(j,m)}\|_{D(s_m,0,a,b),\ell}\leq \frac{C\varepsilon_{j-1}}{\gamma^{2\ell+2}},\ j=1,2,\cdots,m;
\end{equation}
\begin{equation}\label{fc4-8}
H_{(m)}=\sum_{j=0}^mH_{(j,m)}(x,y,\xi,\eta),\ H_{(0,m)}=\mathbf{O}_{d\times d},\ H_{(j,m)}\in C^{\omega,\infty}(D(s_m,r_m,a,b)),
\end{equation}
\begin{equation}\label{fc4-9}
H_{(j,m)}(-x,y,\xi,\eta)=-H_{(j,m)}(x,y,\xi,\eta),
\end{equation}
\begin{equation}\label{fc4-10}
\|H_{(j,m)}\|_{D(s_m,r_m,a,b),\ell}\leq \frac{C\varepsilon_{j-1}}{\gamma^{2\ell+2}},\ j=1,2,\cdots,m;
\end{equation}
\begin{equation}\label{fc4-11}
f_{0}^{(m)}\in C^{\omega,\infty}(D(s_m,0,a,b)),\ f_{0}^{(m)}=O^{N+1}(\xi),\ \|f_{0}^{(m)}\|_{D(s_m,0,a,b),\ell}\leq C\varepsilon_m,\
\end{equation}
\begin{equation}\label{fc4-12}
f_{0}^{(m)}(-x,\xi,\eta)=f_{0}^{(m)}(x,\xi,\eta);
\end{equation}
\begin{equation}\label{fc4-13}
g_{0}^{(m)}\in C^{\omega,\infty}(D(s_m,0,a,b)),\ g_{0}^{(m)}=O^{N+1}(\xi),\ \|g_{0}^{(m)}\|_{D(s_m,0,a,b),\ell}\leq C\varepsilon_m,\
\end{equation}
\begin{equation}\label{fc4-14}
 g_{0}^{(m)}(-x,\xi,\eta)=-g_{0}^{(m)}(x,\xi,\eta);
\end{equation}
\begin{equation}\label{fc4-15}
g_{1}^{(m)}\in C^{\omega,\infty}(D(s_m,0,a,b)),\ g_{1}^{(m)}=O^{N+1}(\xi),\ \|g_{1}^{(m)}\|_{D(s_m,0,a,b),\ell}\leq C\varepsilon_m,\
\end{equation}
\begin{equation}\label{fc4-16}
 g_{1}^{(m)}(-x,\xi,\eta)=-g_{1}^{(m)}(x,\xi,\eta);
\end{equation}
\begin{equation}\label{fc4-17}
f_{\alpha}^{(m)}\in C^{\omega,\infty}(D(s_m,0,a,b)),\ \|\sum_{|\alpha|\geq1}f_{\alpha}^{(m)}(x,\xi,\eta)y^{\alpha}\|_{D(s_m,r_m,a,b),\ell}\leq Cr_m,
\end{equation}
\begin{equation}\label{fc4-18}
 f_{\alpha}^{(m)}(-x,\xi,\eta)=f_{\alpha}^{(m)}(x,\xi,\eta),\ \forall|\alpha|\geq1;
\end{equation}
\begin{equation}\label{fc4-19}
g_{\beta}^{(m)}\in C^{\omega,\infty}(D(s_m,0,a,b)),\ \|\sum_{|\beta|\geq2}g_{\beta}^{(m)}(x,\xi,\eta)y^{\beta}\|_{D(s_m,r_m,a,b),\ell}\leq Cr_m^2,
\end{equation}
\begin{equation}\label{fc4-20}
 g_{\beta}^{(m)}(-x,\xi,\eta)=-g_{\beta}^{(m)}(x,\xi,\eta),\ \forall|\beta|\geq2;
\end{equation}
\begin{equation}\label{fc4-21}
(f_{0}^{(m)}-f_{1}^{(m)}\cdot \mathcal{L}g_{0}^{(m)})^{\hat{}}(0,\xi,\eta)=0;
\end{equation}
\begin{equation}\label{fc4-22}
R_{(m)}^{(1)}=\sum_{j=0}^mR_{(j,m)}^{(1)}(x,\xi,\eta),\ R^{(1)}_{(0,m)}=0,
\end{equation}
\begin{equation}\label{fc4-23}
R_{(j,m)}^{(1)}(-x,\xi,\eta)=R_{(j,m)}^{(1)}(x,\xi,\eta),
\end{equation}
\begin{equation}\label{fc4-24}
R_{(j,m)}^{(1)}\in C^{\omega,\infty}(D(s_m,0,a,b)) \ is \ flat \ on \ DC(\gamma,\tau),
\end{equation}
\begin{equation}\label{fc4-25}
\|R_{(j,m)}^{(1)}\|_{D(s_m,0,a,b),\ell}\leq \frac{C\varepsilon_{j-1}}{\gamma^{2\ell+2}},\ j=1,2,\cdots,m;
\end{equation}
\begin{equation}\label{fb4-10}
 R_{(m)}^{(2)}=\sum_{j=0}^mR_{(j,m)}^{(2)}(x,y,\xi,\eta),\ R^{(2)}_{(0,m)}=0,
\end{equation}
\begin{equation}\label{fb4-11}
 R_{(j,m)}^{(2)}(-x,y,\xi,\eta)=-R_{(j,m)}^{(2)}(x,y,\xi,\eta),
\end{equation}
\begin{equation}\label{fb4-12}
R_{(j,m)}^{(2)}\in C^{\omega,\infty}(D(s_m,r_m,a,b)) \ is \ flat \ on \ DC(\gamma,\tau),
\end{equation}
\begin{equation}\label{fb4-13}
\|R_{(j,m)}^{(2)}\|_{D(s_m,r_m,a,b),\ell}\leq \frac{C\varepsilon_{j-1}}{\gamma^{2\ell+2}},\ j=1,2,\cdots,m.
\end{equation}
Then there is a diffeomorphism $\Phi_{m+1}=\Psi_{m+1}^{-1}$  with
$$\Psi_{m+1}:D(s_{m+1},r_{m+1},a,b)\rightarrow D(s_m,r_m,a,b)$$
of the form
$$\Phi_{m+1}:\theta=x+u_{m+1}(x,\xi,\eta), \ \rho=y+v_{m+1}(x,\xi,\eta)+w_{m+1}(x,\xi,\eta) y,$$
where $u_{m+1}, v_{m+1}, w_{m+1}\in C^{\omega,\infty}(D(s_{m+1},0,a,b))$ and
$$\|u_{m+1}\|_{D(s_{m+1},0,a,b),\ell},\ \|v_{m+1}\|_{D(s_{m+1},0,a,b),\ell},\ \|w_{m+1}\|_{D(s_{m+1},0,a,b),\ell}\leq \frac{C\varepsilon_m}{\gamma^{2\ell+2}}$$
such that system {\rm(\ref{fc4-2})} is changed by $\Phi_{m+1}$ into
\begin{equation}\label{fc4-26}
\begin{cases}
\dot{\theta}=\eta+h_{(m+1)}\cdot(\omega_0-\eta+\Lambda_{(m+1)}(\xi,\eta))+f_{0}^{(m+1)}(\theta,\xi,\eta)+\Sigma_{|\alpha|\geq1}f_{\alpha}^{(m+1)}(\theta,\xi,\eta)\rho^{\alpha}\\ \ \ \ \ \
+R^{(1)}_{(m+1)},\\
\dot{\rho}=H_{(m+1)}\cdot(\omega_0-\eta+\Lambda_{(m+1)}(\xi,\eta))+g_{0}^{(m+1)}(\theta,\xi,\eta)+g_{1}^{(m+1)}(\theta,\xi,\eta) \rho\\ \ \ \ \ \
 +\sum_{|\beta|\geq2}g_{\beta}^{(m+1)}(\theta,\xi,\eta)\rho^{\beta}+R^{(2)}_{(m+1)},
\end{cases}
\end{equation}
where $\Lambda_{(m+1)}$, $h_{(m+1)}$, $H_{(m+1)}$, $f_{0}^{(m+1)}$, $g_{0}^{(m+1)}$, $g_{1}^{(m+1)}$, $f_{\alpha}^{(m+1)}$ {\rm(}$|\alpha|\geq1${\rm)}, $g_{\beta}^{(m+1)}$ {\rm(}$|\beta|\geq2${\rm)}, $R^{(1)}_{(m+1)}$, $R^{(2)}_{(m+1)}$ satisfy all conditions {\rm(\ref{fc4-3})}-{\rm(\ref{fb4-13})} with replacing $m$ by $m+1$. In other words, system {\rm(\ref{fc4-1})} is changed by $\Phi^{(m+1)}=\Phi_0\circ\cdots\circ\Phi_m\circ\Phi_{m+1}$ into {\rm(\ref{fc4-26})}.
  \end{lem}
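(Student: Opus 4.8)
The plan is to execute a single step of the KAM counterterm iteration, passing from (\ref{fc4-2}) at stage $m$ to (\ref{fc4-26}) at stage $m+1$. The transformation $\Phi_{m+1}:\theta=x+u_{m+1}(x,\xi,\eta)$, $\rho=y+v_{m+1}(x,\xi,\eta)+w_{m+1}(x,\xi,\eta)y$ is designed to remove the non-flat parts of the three genuinely small coefficients $f_0^{(m)},g_0^{(m)},g_1^{(m)}$ (each $O^{N+1}(\xi)$ and of size $\varepsilon_m$), while the non-small families $f_\alpha^{(m)}$ ($|\alpha|\ge1$) and $g_\beta^{(m)}$ ($|\beta|\ge2$) are merely rearranged. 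Since $u_{m+1},v_{m+1},w_{m+1}$ do not depend on the evolving variables, I would first compute $\dot\theta=(I+\partial_x u_{m+1})\dot x$ and $\dot\rho=\dot y+(\partial_x v_{m+1})\dot x+((\partial_x w_{m+1})\dot x)y+w_{m+1}\dot y$ from (\ref{fc4-2}), and then re-expand in powers of $\rho$ after inverting $\Phi_{m+1}$. The leading part $\eta$ of $\dot x$ produces the operator $\eta\cdot\partial_x$, so matching the $\rho$-independent and $\rho$-linear terms yields three homological equations of exactly the type (\ref{fb4-41}): one for $v_{m+1}$ against $g_0^{(m)}$, one for $w_{m+1}$ against $g_1^{(m)}$, and one for $u_{m+1}$ against the effective angle term $f_0^{(m)}-f_1^{(m)}\cdot\mathcal{L}g_0^{(m)}$ (the subtracted cross term appears because substituting $y=\rho-v_{m+1}-\cdots$ into $f_1^{(m)}y$ creates a new $\rho$-independent contribution $-f_1^{(m)}\mathcal{L}g_0^{(m)}$).

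Each homological equation $\eta\cdot\partial_x(\cdot)=-(1-P_{\gamma,\tau})(\cdot)$ is solved by the operator $\mathcal{L}$, i.e. $v_{m+1}=\mathcal{L}g_0^{(m)}$ and analogously for $w_{m+1},u_{m+1}$. Solvability of the $k=0$ Fourier mode rests on two distinct mechanisms: for the action equations it follows from reversibility, since $g_0^{(m)}(-x)=-g_0^{(m)}(x)$ and $g_1^{(m)}(-x)=-g_1^{(m)}(x)$ force $\hat g_0^{(m)}(0)=\hat g_1^{(m)}(0)=0$ as in (\ref{fb4-40}); for the angle equation it is precisely the counterterm normalization (\ref{fc4-21}) that makes the mean of $f_0^{(m)}-f_1^{(m)}\cdot\mathcal{L}g_0^{(m)}$ vanish. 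The part $(1-P_{\gamma,\tau})$ is what $\mathcal{L}$ inverts safely, because on its support the divisor satisfies $|\langle k,\eta\rangle|\ge\frac{\gamma}{4|k|^{\tau}}$; the complementary part $P_{\gamma,\tau}(\cdot)$ is, by Definition \ref{defn3-1}, flat on $DC(\gamma,\tau)$ and is deposited into the updated remainders $R^{(1)}_{(m+1)},R^{(2)}_{(m+1)}$. I would then re-establish (\ref{fc4-21}) at level $m+1$ by defining the counterterm increment $\bar\Lambda_{(m+1)}-\bar\Lambda_{(m)}$ to equal the mean of the newly generated effective angle term, fixing $\Lambda_{(m+1)}=\Lambda_{(00)}+\bar\Lambda_{(m+1)}$ as in (\ref{fc4-3}).

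The core of the argument is the bookkeeping that routes every transformed term into exactly one slot of (\ref{fc4-26}). The terms linear in $\omega_0-\eta+\Lambda_{(m)}$, for instance $(\partial_x u_{m+1})\,h_{(m)}\cdot(\omega_0-\eta+\Lambda_{(m)})$, must be absorbed into the updated blocks $h_{(m+1)}\cdot(\omega_0-\eta+\Lambda_{(m+1)})$ and $H_{(m+1)}\cdot(\omega_0-\eta+\Lambda_{(m+1)})$ rather than into $f_0^{(m+1)}$, precisely because $\omega_0-\eta+\Lambda$ is $O(1)$ in $\eta$ and would otherwise destroy the quadratic gain; these supply the new summands $h_{(m+1,m+1)},H_{(m+1,m+1)}$ obeying (\ref{fc4-5})--(\ref{fc4-10}). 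Genuine products of small quantities (such as $(\partial_x u_{m+1})f_0^{(m)}$ and the mean-subtracted cross terms) form the new $f_0^{(m+1)},g_0^{(m+1)},g_1^{(m+1)}$, whereas products with the non-small families feed back only into $f_\alpha^{(m+1)},g_\beta^{(m+1)}$. I would track throughout that the homological solutions and all products preserve both the $O^{N+1}(\xi)$ vanishing and the reversibility parities: solving $\eta\cdot\partial_x$ against an even (resp.\ odd) right-hand side yields $u_{m+1}$ odd and $v_{m+1},w_{m+1}$ even, which propagates to $f_0^{(m+1)}(-x)=f_0^{(m+1)}(x)$, $g_0^{(m+1)}(-x)=-g_0^{(m+1)}(x)$, $g_1^{(m+1)}(-x)=-g_1^{(m+1)}(x)$ and the parities (\ref{fc4-6}), (\ref{fc4-9}), (\ref{fc4-23}), (\ref{fb4-11}).

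The quantitative estimates would combine Lemma \ref{lem3-2} for $P_{\gamma,\tau}$ and $\mathcal{L}$ (which costs a factor $\gamma^{-\ell}(s_m-s_{m+1})^{-(\tau+1)\ell-d}$ in the $\ell$-norm, giving the bounds $\|u_{m+1}\|,\|v_{m+1}\|,\|w_{m+1}\|\le C\varepsilon_m$ from the $\varepsilon_m$-sized right-hand sides), Cauchy estimates on the thinner domain for the $x$-derivatives of $u_{m+1},v_{m+1},w_{m+1}$, and standard composition and Neumann-series estimates to invert $\Phi_{m+1}$ and obtain $\Psi_{m+1}:D(s_{m+1},r_{m+1},a,b)\to D(s_m,r_m,a,b)$. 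I expect the main obstacle to be verifying that, after these losses, the newly produced $f_0^{(m+1)},g_0^{(m+1)},g_1^{(m+1)}$ are bounded by $\varepsilon_{m+1}=\varepsilon_m^{4/3}$: the small terms are quadratic, of size $\lesssim\varepsilon_m^2$, so one must check that the accumulated loss factors, which grow only polynomially in $m$ through $s_m-s_{m+1}\sim m^{-2}s_0$, are defeated by the slack between $\varepsilon_m^2$ and $\varepsilon_m^{4/3}$ once $\varepsilon_0=a^{N+1}$ is small enough; the choice of exponent $4/3<2$ is made precisely to leave this room. The remaining checks, that $R^{(1)}_{(m+1)},R^{(2)}_{(m+1)}$ are flat on $DC(\gamma,\tau)$ and satisfy (\ref{fc4-25}), (\ref{fb4-13}), are then routine consequences of the flatness of $P_{\gamma,\tau}$ and the above bounds.
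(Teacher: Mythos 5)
Your proposal follows the paper's own proof in all essentials: the same form of transformation, the same strategy of three homological equations solved through the cut-off operator with the non-flat part deposited into $R^{(1)},R^{(2)}$, the same routing of $(\partial_{x}u_{m+1})\,h_{(m)}\cdot(\omega_0-\eta+\Lambda_{(m)})$ into $h_{(m+1)}$ and its analogue into $H_{(m+1)}$, the same parity and $O^{N+1}(\xi)$ bookkeeping, and the same quadratic-smallness-versus-$\varepsilon_{m}^{4/3}$ accounting. One imprecision is minor but worth fixing: the homological equation for $w_{m+1}$ is not ``against $g_1^{(m)}$'' alone; matching the $\rho$-linear terms forces the right-hand side $g_{1}^{(m)}-2g_{2}^{(m)}\cdot v_{m+1}+\partial_{x}v_{m+1}\cdot f_{1}^{(m)}$ as in (\ref{fc5-6}). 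These cross terms are of size $\varepsilon_m$ (not $\varepsilon_m^2$), so they cannot be left in $g_1^{(m+1)}$; they must be killed by $w_{m+1}$, and solvability of the zero Fourier mode still follows because the whole right-hand side is odd in $x$ (the paper's (\ref{fc5-18})--(\ref{fc5-21})).

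The genuine gap is in the counterterm step. You propose to take the increment $\Gamma=\bar{\Lambda}_{(m+1)}-\bar{\Lambda}_{(m)}$ equal to the mean $M$ of the newly generated effective angle term. But introducing $\Gamma$ changes the system one is normalizing: one must set $f_{0}^{(m+1)}=\tilde{f}_{0}^{(m+1)}-h_{(m+1)}\cdot\Gamma$ and $g_{0}^{(m+1)}=\tilde{g}_{0}^{(m+1)}-\bar{H}_{(m+1)}\cdot\Gamma$ (with $\bar{H}_{(m+1)}=H_{(m+1)}(\theta,0,\xi,\eta)$), and the second of these feeds back into the normalization through $\mathcal{L}g_{0}^{(m+1)}$. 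Hence condition (\ref{fc4-21}) at level $m+1$ is the \emph{implicit} equation $M-(\mathbf{1}_{d\times d}+N)\cdot\Gamma=0$, where $N=(\sum_{j\geq1}h_{(j,m+1)}-f_{1}^{(m+1)}\cdot\mathcal{L}\bar{H}_{(m+1)})^{\hat{}}(0,\xi,\eta)$; the paper solves it by inverting $\mathbf{1}_{d\times d}+N$, which is possible since $\|N\|\leq C\varepsilon_0$ (see (\ref{fc5-105})--(\ref{fc5-106})). With your literal choice $\Gamma=M$, the normalization fails by $-N\cdot M=O(\varepsilon_0\varepsilon_{m+1})$. This residual is fatal to the induction as stated: the exact identity (\ref{fc4-21}) at level $m+1$ is precisely what makes the zero Fourier mode of the right-hand side of the $u$-homological equation vanish at the next step ($\eta\cdot\partial_{x}$ annihilates no constant), the residual is not flat so it cannot be hidden in $R^{(1)}_{(m+2)}$, and it is not bounded by $C\varepsilon_{m+2}=C\varepsilon_{m+1}^{4/3}$ for large $m$ since $\varepsilon_0$ is fixed. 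The repair is exactly the paper's Lemma \ref{lem5-2}: define $\Gamma=(\mathbf{1}_{d\times d}+N)^{-1}\cdot M$, which still satisfies $\|\Gamma\|_{D(0,0,a,b),\ell}\leq C\varepsilon_{m+1}$ and $\Gamma=O^{N+1}(\xi)$.
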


\section{Proof of the iterative lemma}\label{sec5}
In this section, we will prove the iterative lemma.
\subsection{Derivation of homological equations}
 Consider a diffeomorphism $\Phi_{m+1}$ of the form
 \begin{equation}\label{fc5-1}
\begin{cases}
\theta=x+u_{m+1}(x,\xi,\eta),\\
\rho=y+v_{m+1}(x,\xi,\eta)+w_{m+1}(x,\xi,\eta) y,
\end{cases}
\end{equation}
where $u_{m+1}$, $v_{m+1}$, $w_{m+1}$ will be specified. By (\ref{fc4-2}) and (\ref{fc5-1}), we have that
\begin{eqnarray}\label{fc5-2}
 \nonumber\dot{\theta}&=&\dot{x}+\partial_{x}u_{m+1}\cdot \dot{x}\\
\nonumber&=&\eta+(h_{(m)}+\partial_{x}u_{m+1}\cdot h_{(m)})\cdot (\omega_0-\eta+\Lambda_{(m)}(\xi,\eta))\\
\nonumber&+&f_{0}^{(m)}-f_{1}^{(m)}\cdot v_{m+1}+\eta\cdot \partial_{x}u_{m+1}\\
\nonumber&+&f_{1}^{(m)}\cdot (y+v_{m+1})+\sum_{|\alpha|\geq2}f_{\alpha}^{(m)}(x,\xi,\eta)y^{\alpha}\\
&+&\partial_{x}u_{m+1}\cdot(f_{0}^{(m)}+\sum_{|\alpha|\geq1}f_{\alpha}^{(m)}(x,\xi,\eta)y^{\alpha})+R_{(m)}^{(1)}+\partial_{x}u_{m+1}\cdot R_{(m)}^{(1)},
\end{eqnarray}

\begin{eqnarray}\label{fc5-3}
 \nonumber\dot{\rho}&=&\dot{y}+\partial_{x}v_{m+1}\cdot\dot{x}+\partial_{x}(w_{m+1}\cdot y)\cdot\dot{x}+w_{m+1}\cdot\dot{y} \\
 \nonumber&=&(H_{(m)}+\partial_{x}v_{m+1}\cdot h_{(m)}+\partial_{x}(w_{m+1}\cdot y)\cdot h_{(m)}+w_{m+1}\cdot H_{(m)})\cdot (\omega_0-\eta\\
\nonumber&&+\Lambda_{(m)}(\xi,\eta))+g_{0}^{(m)}+\eta\cdot \partial_{x}v_{m+1}\\
\nonumber&&+(g_{1}^{(m)}-2g_{2}^{(m)}\cdot v_{m+1}+\partial_{x}v_{m+1}\cdot f_{1}^{(m)}+ \eta\cdot \partial_{x}w_{m+1}) y\\
\nonumber&&+\sum_{|\beta|=2}g_{\beta}^{(m)}(y+v_{m+1})^{\beta}-\sum_{|\beta|=2}g_{\beta}^{(m)}v^{\beta}_{m+1}+\sum_{|\beta|\geq3}g_{\beta}^{(m)}(x,\xi,\eta)y^{\beta}\\
\nonumber&&+\partial_{x}v_{m+1}\cdot(f_{0}^{(m)}+\sum_{|\alpha|\geq2}f_{\alpha}^{(m)}(x,\xi,\eta)y^{\alpha})\\
\nonumber&&+\partial_{x}(w_{m+1}\cdot y)\cdot(f_{0}^{(m)}+\sum_{|\alpha|\geq1}f_{\alpha}^{(m)}(x,\xi,\eta)y^{\alpha})\\
\nonumber&&+w_{m+1}\cdot (g_{0}^{(m)}+g_{1}^{(m)}\cdot y+\sum_{|\beta|\geq2}g_{\beta}^{(m)}(x,\xi,\eta)y^{\beta})\\
&&+R_{(m)}^{(2)}+\partial_{x}v_{m+1}\cdot R_{(m)}^{(1)}+\partial_{x}(w_{m+1}\cdot y)\cdot R_{(m)}^{(1)}+w_{m+1}\cdot R_{(m)}^{(2)},
\end{eqnarray}
where $\eta=(\eta_1,\eta_2,\cdots,\eta_d)^{T}$, $\eta\cdot \partial_{x}=\sum_{j=1}^d\eta_j\partial_{x_j}$, $(2g_{2}^{(m)}\cdot v_{m+1})y:=\sum_{|\beta|=2}g_{\beta}^{(m)}(y+v_{m+1})^{\beta}-\sum_{|\beta|=2}g_{\beta}^{(m)}y^{\beta}-\sum_{|\beta|=2}g_{\beta}^{(m)}v^{\beta}_{m+1}$.

By (\ref{fc5-2}) and (\ref{fc5-3}), we derive homological equations:
\begin{equation}\label{fc5-4}
(1-P_{\gamma,\tau})(f_{0}^{(m)}-f_{1}^{(m)}\cdot v_{m+1})+\eta\cdot \partial_{x}u_{m+1}=0,
\end{equation}
\begin{equation}\label{fc5-5}
(1-P_{\gamma,\tau})g_{0}^{(m)}+\eta\cdot \partial_{x}v_{m+1}=0,
\end{equation}
\begin{equation}\label{fc5-6}
(1-P_{\gamma,\tau})(g_{1}^{(m)}-2g_{2}^{(m)}\cdot v_{m+1}+\partial_{x}v_{m+1}\cdot f_{1}^{(m)})+\eta\cdot \partial_{x}w_{m+1}=0.
\end{equation}

\subsection{Solution to the homological equations}
$\bullet$ Solution to (\ref{fc5-5}).

By (\ref{fc4-14}), we have
\begin{equation}\label{fc5-7}
 \hat{g}^{(m)}_{0}(0,\xi,\eta)=0.
\end{equation}
Then by passing to Fourier coefficients, homological equation (\ref{fc5-5}) reads
\begin{equation}\label{fc5-8}
(1-\tilde{l}(\langle k,\eta\rangle\frac{|k|^{\tau}}{\gamma}))\hat{g}_{0}^{(m)}(k,\xi,\eta)+i\langle k,\eta\rangle\hat{v}_{m+1}(k,\xi,\eta)=0,
\end{equation}
where $k\in\mathbf{Z}^{d}/ \left\{0 \right\}$.

 If $|\langle k,\eta\rangle\frac{|k|^{\tau}}{\gamma}|\leq\frac{1}{4}$, we have $1-\tilde{l}(\langle k,\eta\rangle\frac{|k|^{\tau}}{\gamma})=0$, so at this time, we take
\begin{equation}\label{fb5-3}
\hat{v}_{m+1}(k,\xi,\eta)=0.
\end{equation}
If $|\langle k,\eta\rangle\frac{|k|^{\tau}}{\gamma}|>\frac{1}{4}$, we have that
\begin{equation}\label{fc5-11}
\|\frac{1-\tilde{l}(\langle k,\eta\rangle\frac{|k|^{\tau}}{\gamma})}{\langle k,\eta\rangle}\|_{D(0,0,0,b),\ell}\leq C\frac{|k|^{(\tau+1)\ell+\tau}}{\gamma^{\ell+1}}.
\end{equation}
The function $g^{(m)}_0=g^{(m)}_0(x,\xi,\eta)$ is analytic in $D(s_m,0,a,b)$, so the Fourier coefficients verify
\begin{equation}\label{fc5-10}
\|\hat{g}_{0}^{(m)}(k,\xi,\eta)\|_{D(0,0,a,b),\ell}\leq \|g_{0}^{(m)}\|_{D(s_m,0,a,b),\ell}e^{-|k|s_m}, \ \forall k\in\mathbf{Z}^{d}.
\end{equation}
Then we can solve homological equation (\ref{fc5-5}) by setting
\begin{equation}\label{fc5-9}
v_{m+1}(x,\xi,\eta)=-\sum_{k\in\mathbf{Z}^{d},\ |\langle k,\eta\rangle\frac{|k|^{\tau}}{\gamma}|>\frac{1}{4} }\frac{(1-\tilde{l}(\langle k,\eta\rangle\frac{|k|^{\tau}}{\gamma}))\hat{g}_{0}^{(m)}(k,\xi,\eta)}{i\langle k,\eta\rangle}e^{i\langle k,x\rangle}.
\end{equation}

By (\ref{fc4-13}), (\ref{fc5-11})-(\ref{fc5-9}), we have
\begin{eqnarray}\label{fc5-12}
 \nonumber&&\|v_{m+1}(x,\xi,\eta)\|_{D(s_m^{(1)},0,a,b),\ell}\\&\leq&\nonumber \sum_{k\in\mathbf{Z}^{d},\ |\langle k,\eta\rangle\frac{|k|^{\tau}}{\gamma}|>\frac{1}{4} }C(\|\hat{g}_{0}^{(m)}(k,\xi,\eta)\|_{D(0,0,a,b),\ell}\|\frac{1-\tilde{l}(\langle k,\eta\rangle\frac{|k|^{\tau}}{\gamma})}{\langle k,\eta\rangle}\|_{D(0,0,0,b),0}\\
&&+\nonumber\|\hat{g}_{0}^{(m)}(k,\xi,\eta)\|_{D(0,0,a,b),0}\|\frac{1-\tilde{l}(\langle k,\eta\rangle\frac{|k|^{\tau}}{\gamma})}{\langle k,\eta\rangle}\|_{D(0,0,0,b),\ell})e^{|k|s_m^{(1)}}\\
&\leq&\nonumber C\sum_{k\in\mathbf{Z}^{d},\ |\langle k,\eta\rangle\frac{|k|^{\tau}}{\gamma}|>\frac{1}{4}}\|g_{0}^{(m)}\|_{D(s_m,0,a,b),\ell}e^{-|k|s_m}\frac{|k|^{(\tau+1)\ell+\tau}}{\gamma^{\ell+1}}e^{|k|s_m^{(1)}}\\
&\leq&\nonumber \frac{C}{\gamma^{\ell+1}}\varepsilon_m\sum_{k\in\mathbf{Z}^{d}/ \left\{0 \right\} }|k|^{(\tau+1)\ell+\tau} e^{-|k|(\frac{s_m-s_{m+1}}{10})}\\
&\leq&\nonumber \frac{C}{\gamma^{\ell+1}}(\frac{1}{s_m-s_{m+1}})^{(\tau+1)\ell+\tau+d}\varepsilon_m\\
&\leq& \frac{C\varepsilon_m}{\gamma^{\ell+1}}.
\end{eqnarray}

By (\ref{fc4-13}), (\ref{fc5-5}), (\ref{fc5-9}) and (\ref{fc5-12}), we have
\begin{equation}\label{fb5-4}
v_{m+1}\in C^{\omega,\infty}(D(s_m^{(1)},0,a,b)),\ v_{m+1}=O^{N+1}(\xi).
\end{equation}
By  (\ref{fc5-12}) and Cauchy estimate, we have
\begin{equation}\label{fb5-1}
\|\partial_{x}v_{m+1}(x,\xi,\eta)\|_{D(s_m^{(2)},0,a,b),\ell}\leq \frac{C\varepsilon_m}{\gamma^{\ell+1}}.
\end{equation}

$\bullet$ Solution to (\ref{fc5-4}).

By (\ref{fc4-21}) and (\ref{fc5-5}), we have
\begin{equation}\label{fc5-13}
 (f_{0}^{(m)}-f_{1}^{(m)}\cdot v_{m+1})^{\hat{}}(0,\xi,\eta)=0.
\end{equation}
By (\ref{fc4-17}), we have
\begin{equation}\label{fc5-14}
 \|f_{1}^{(m)}(x,\xi,\eta)\|_{D(s_m,0,a,b),\ell}\leq C.
\end{equation}
By (\ref{fc4-11}), (\ref{fc5-12}) and (\ref{fc5-14}), we have
\begin{equation}\label{fc5-15}
 \|f_{0}^{(m)}-f_{1}^{(m)}\cdot v_{m+1}\|_{D(s^{(1)}_m,0,a,b),\ell}\leq \frac{C\varepsilon_m}{\gamma^{\ell+1}}.
\end{equation}
Similarly, by applying (\ref{fc5-13}) and (\ref{fc5-15}) to (\ref{fc5-4}), we can solve homological equation (\ref{fc5-4}) by setting
\begin{equation}\label{fb5-100}
u_{m+1}(x,\xi,\eta)=-\sum_{k\in\mathbf{Z}^{d},\ |\langle k,\eta\rangle\frac{|k|^{\tau}}{\gamma}|>\frac{1}{4} }\frac{(1-\tilde{l}(\langle k,\eta\rangle\frac{|k|^{\tau}}{\gamma}))(f_{0}^{(m)}-f_{1}^{(m)}\cdot v_{m+1})^{\hat{}}(k,\xi,\eta)}{i\langle k,\eta\rangle}e^{i\langle k,x\rangle}.
\end{equation}
 Using the same method as in proving (\ref{fc5-12})-(\ref{fb5-1}), we have
\begin{equation}\label{fc5-16}
 \|u_{m+1}\|_{D(s^{(2)}_m,0,a,b),\ell}\leq \frac{C\varepsilon_m}{\gamma^{2\ell+2}},
\end{equation}
\begin{equation}\label{fb5-5}
u_{m+1}\in C^{\omega,\infty}(D(s_m^{(2)},0,a,b)),\ u_{m+1}=O^{N+1}(\xi),
\end{equation}
\begin{equation}\label{fb5-2}
 \|\partial_{x}u_{m+1}\|_{D(s^{(3)}_m,0,a,b),\ell}\leq \frac{C\varepsilon_m}{\gamma^{2\ell+2}}.
\end{equation}

$\bullet$ Solution to (\ref{fc5-6}).

By (\ref{fc4-14}) and (\ref{fc5-9}),we have
\begin{equation}\label{fc5-17}
 v_{m+1}(-x,\xi,\eta)=v_{m+1}(x,\xi,\eta).
\end{equation}
By (\ref{fc4-16}), (\ref{fc4-18}), (\ref{fc4-20}) and (\ref{fc5-17}), we have that
\begin{eqnarray}\label{fc5-18}
 \nonumber&&(g_{1}^{(m)}-2g_{2}^{(m)}\cdot v_{m+1}+\partial_{x}v_{m+1}\cdot f_{1}^{(m)})(-x,\xi,\eta)\\&=& -(g_{1}^{(m)}-2g_{2}^{(m)}\cdot v_{m+1}+\partial_{x}v_{m+1}\cdot f_{1}^{(m)})(x,\xi,\eta),
\end{eqnarray}
which follows
\begin{eqnarray}\label{fc5-21}
(g_{1}^{(m)}-2g_{2}^{(m)}\cdot v_{m+1}+\partial_{x}v_{m+1}\cdot f_{1}^{(m)})^{\hat{}}(0,\xi,\eta)=0.
\end{eqnarray}
By (\ref{fc4-19}) and (\ref{fc5-12}), we have
\begin{equation}\label{fc5-22}
 \|2g_{2}^{(m)}\cdot v_{m+1}\|_{D(s^{(2)}_m,0,a,b),\ell}\leq \frac{C\varepsilon_m}{\gamma^{\ell+1}}.
\end{equation}
By (\ref{fc4-15}), (\ref{fc5-12}), (\ref{fb5-1}), (\ref{fc5-14}) and (\ref{fc5-22}), we have
\begin{equation}\label{fc5-23}
 \|g_{1}^{(m)}-2g_{2}^{(m)}\cdot v_{m+1}+\partial_{x}v_{m+1}\cdot f_{1}^{(m)}\|_{D(s^{(2)}_m,0,a,b),\ell}\leq \frac{C\varepsilon_m}{\gamma^{\ell+1}}.
\end{equation}
Similarly, by applying (\ref{fc5-21}) and (\ref{fc5-23}) to (\ref{fc5-6}), we can solve homological equation (\ref{fc5-6}) by setting
\begin{equation}\label{fb5-101}
 w_{m+1}(x,\xi,\eta)=-\sum_{k\in\mathbf{Z}^{d},\ |\langle k,\eta\rangle\frac{|k|^{\tau}}{\gamma}|>\frac{1}{4} }\frac{(1-\tilde{l}(\langle k,\eta\rangle\frac{|k|^{\tau}}{\gamma}))\hat{G}^{(m)}(k,\xi,\eta)}{i\langle k,\eta\rangle}e^{i\langle k,x\rangle},
\end{equation}
where $G^{(m)}=g_{1}^{(m)}-2g_{2}^{(m)}\cdot v_{m+1}+\partial_{x}v_{m+1}\cdot f_{1}^{(m)}$.
 Using the same method as in proving (\ref{fc5-12})-(\ref{fb5-1}), we have
\begin{equation}\label{fc5-24}
 \|w_{m+1}\|_{D(s^{(3)}_m,0,a,b),\ell}\leq \frac{C\varepsilon_m}{\gamma^{2\ell+2}},
\end{equation}
\begin{equation}\label{fc5-26}
w_{m+1}\in C^{\omega,\infty}(D(s_m^{(3)},0,a,b)),\ w_{m+1}=O^{N+1}(\xi),
\end{equation}
\begin{equation}\label{fc5-25}
 \|\partial_{x}(w_{m+1}\cdot y)\|_{D(s^{(4)}_m,r_m,a,b),\ell}\leq \frac{Cr_m\varepsilon_m}{\gamma^{2\ell+2}}.
\end{equation}

\subsection{Counter-term}
By (\ref{fc5-1})-(\ref{fc5-6}), we have
\begin{eqnarray}\label{fc5-27}
 \nonumber\dot{\theta}&=&\eta+(h_{(m)}+\partial_{x}u_{m+1}\cdot h_{(m)})\cdot (\omega_0-\eta+\Lambda_{(m)}(\xi,\eta))\\
\nonumber&+&P_{\gamma,\tau}(f_{0}^{(m)}-f_{1}^{(m)}\cdot v_{m+1})\\
\nonumber&+&f_{1}^{(m)}\cdot (\rho-w_{m+1} y)+\sum_{|\alpha|\geq2}f_{\alpha}^{(m)}(x,\xi,\eta)y^{\alpha}\\
&+&\partial_{x}u_{m+1}\cdot(f_{0}^{(m)}+\sum_{|\alpha|\geq1}f_{\alpha}^{(m)}(x,\xi,\eta)y^{\alpha})+R_{(m)}^{(1)}+\partial_{x}u_{m+1}\cdot R_{(m)}^{(1)},
\end{eqnarray}

\begin{eqnarray}\label{fc5-28}
 \nonumber\dot{\rho}&=&(H_{(m)}+\partial_{x}v_{m+1}\cdot h_{(m)}+\partial_{x}(w_{m+1} y)\cdot h_{(m)}+w_{m+1}\cdot H_{(m)})\cdot (\omega_0-\eta+\Lambda_{(m)}(\xi,\eta))\\
\nonumber&+&P_{\gamma,\tau}g_{0}^{(m)}+P_{\gamma,\tau}(g_{1}^{(m)}-2g_{2}^{(m)}\cdot v_{m+1}+\partial_{x}v_{m+1}\cdot f_{1}^{(m)})\cdot y\\
\nonumber&+&\sum_{|\beta|=2}g_{\beta}^{(m)}(\rho-w_{m+1} y)^{\beta}-\sum_{|\beta|=2}g_{\beta}^{(m)}v^{\beta}_{m+1}+\sum_{|\beta|\geq3}g_{\beta}^{(m)}(x,\xi,\eta)y^{\beta}\\
\nonumber&+&\partial_{x}v_{m+1}\cdot(f_{0}^{(m)}+\sum_{|\alpha|\geq2}f_{\alpha}^{(m)}(x,\xi,\eta)y^{\alpha})\\
\nonumber&+&\partial_{x}(w_{m+1}\cdot y)\cdot(f_{0}^{(m)}+\sum_{|\alpha|\geq1}f_{\alpha}^{(m)}(x,\xi,\eta)y^{\alpha})\\
\nonumber&+&w_{m+1}\cdot (g_{0}^{(m)}+g_{1}^{(m)}\cdot y+\sum_{|\beta|\geq2}g_{\beta}^{(m)}(x,\xi,\eta)y^{\beta})\\
&+&R_{(m)}^{(2)}+\partial_{x}v_{m+1}\cdot R_{(m)}^{(1)}+\partial_{x}(w_{m+1}\cdot y)\cdot R_{(m)}^{(1)}+w_{m+1}\cdot R_{(m)}^{(2)}.
\end{eqnarray}
By (\ref{fc5-1}), (\ref{fc5-12})-(\ref{fb5-1}), (\ref{fc5-16})-(\ref{fb5-2}), (\ref{fc5-24})-(\ref{fc5-25}) and by means of implicit theorem, we have that $\Phi_{m+1}^{-1}=\Psi_{m+1}$:
 \begin{equation}\label{fc5-29}
\begin{cases}
x=\theta+U_{m+1}(\theta,\xi,\eta),\\
y=\rho+V_{m+1}(\theta,\xi,\eta)+W_{m+1}(\theta,\xi,\eta) \rho,
\end{cases}
\end{equation}
where $(\theta,\rho,\xi,\eta)\in D(s^{(5)}_m,r^{(1)}_m,a,b)$,  $U_{m+1}, V_{m+1}, W_{m+1}\in C^{\omega,\infty}(D(s^{(5)}_m,0,a,b))$,
 $U_{m+1}, V_{m+1}, W_{m+1}=O^{N+1}(\xi)$ and
\begin{equation}\label{fc5-30}
\|U_{m+1}\|_{D(s^{(5)}_m,0,a,b),\ell}\leq \frac{C\varepsilon_m}{\gamma^{2\ell+2}}\ll s_m-s_{m+1},
\end{equation}
\begin{equation}\label{fc5-31}
\|V_{m+1}\|_{D(s^{(5)}_m,0,a,b),\ell}\leq \frac{C\varepsilon_m}{\gamma^{\ell+1}}\ll r_m-r_{m+1},
\end{equation}
\begin{equation}\label{fc5-32}
\|W_{m+1}\|_{D(s^{(5)}_m,0,a,b),\ell}\leq \frac{C\varepsilon_m}{\gamma^{2\ell+2}}\ll r_m-r_{m+1},
\end{equation}
\begin{eqnarray}\label{fc5-33}
 \nonumber\Psi_{m+1}(D(s_{m+1},r_{m+1},a,b))&\subset&\nonumber \Psi_{m+1}(D(s^{(5)}_m,r^{(1)}_m,a,b))\\
&\subset&D(s_{m},r_{m},a,b).
\end{eqnarray}

Inserting (\ref{fc5-29}) into (\ref{fc5-27}) and (\ref{fc5-28}), we can rewrite (\ref{fc5-27}) and  (\ref{fc5-28}) as
\begin{eqnarray}\label{fc5-34}
 \nonumber\dot{\theta}&=&\eta+(h_{(m)}+\partial_{x}u_{m+1}\cdot h_{(m)})(\theta+U_{m+1},\xi,\eta)\cdot (\omega_0-\eta+\Lambda_{(m)}(\xi,\eta))\\
\nonumber&+&(P_{\gamma,\tau}(f_{0}^{(m)}-f_{1}^{(m)}\cdot v_{m+1}))(\theta+U_{m+1},\xi,\eta)\\
\nonumber&+&(f_{1}^{(m)}\cdot (\rho-w_{m+1} (\rho+V_{m+1}+W_{m+1}\rho)))(\theta+U_{m+1},\xi,\eta)\\
\nonumber&+&\sum_{|\alpha|\geq2}f_{\alpha}^{(m)}(\theta+U_{m+1},\xi,\eta)(\rho+V_{m+1}+W_{m+1}\rho)^{\alpha}\\
\nonumber&+&(\partial_{x}u_{m+1}\cdot f_{0}^{(m)})(\theta+U_{m+1},\xi,\eta)\\
\nonumber&+&(\partial_{x}u_{m+1})(\theta+U_{m+1},\xi,\eta)\cdot\sum_{|\alpha|\geq1}f_{\alpha}^{(m)}(\theta+U_{m+1},\xi,\eta)(\rho+V_{m+1}+W_{m+1}\rho)^{\alpha}\\
&+&(R_{(m)}^{(1)}+\partial_{x}u_{m+1}\cdot R_{(m)}^{(1)})(\theta+U_{m+1},\xi,\eta),
\end{eqnarray}

\begin{eqnarray}\label{fc5-35}
 \nonumber\dot{\rho}&=&(H_{(m)}+\partial_{x}v_{m+1}\cdot h_{(m)}+\partial_{x}(w_{m+1}\cdot (\rho+V_{m+1}+W_{m+1}\rho))\cdot h_{(m)}\\
\nonumber&&+w_{m+1}\cdot H_{(m)})(\theta+U_{m+1},\rho+V_{m+1}+W_{m+1}\rho,\xi,\eta)\cdot (\omega_0-\eta+\Lambda_{(m)}(\xi,\eta))\\
\nonumber&+&(P_{\gamma,\tau}g_{0}^{(m)})(\theta+U_{m+1},\xi,\eta)+(P_{\gamma,\tau}(g_{1}^{(m)}-2g_{2}^{(m)}\cdot v_{m+1}\\
\nonumber&&+\partial_{x}v_{m+1}\cdot f_{1}^{(m)}))(\theta+U_{m+1},\xi,\eta)\cdot (\rho+V_{m+1}+W_{m+1}\rho)\\
\nonumber&+&\sum_{|\beta|=2}(g_{\beta}^{(m)}(\rho-w_{m+1} (\rho+V_{m+1}+W_{m+1}\rho))^{\beta}-g_{\beta}^{(m)}v^{\beta}_{m+1})(\theta+U_{m+1},\xi,\eta)\\
\nonumber&+&\sum_{|\beta|\geq3}g_{\beta}^{(m)}(\theta+U_{m+1},\xi,\eta)(\rho+V_{m+1}+W_{m+1}\rho)^{\beta}\\
\nonumber&+&(\partial_{x}v_{m+1}\cdot f_{0}^{(m)})(\theta+U_{m+1},\xi,\eta)\\
\nonumber&+&(\partial_{x}v_{m+1})(\theta+U_{m+1},\xi,\eta)\cdot\sum_{|\alpha|\geq2}f_{\alpha}^{(m)}(\theta+U_{m+1},\xi,\eta)(\rho+V_{m+1}+W_{m+1}\rho)^{\alpha}\\
\nonumber&+&(\partial_{x}(w_{m+1}\cdot (\rho+V_{m+1}+W_{m+1}\rho))\cdot f_{0}^{(m)})(\theta+U_{m+1},\xi,\eta)\\
\nonumber&+&(\partial_{x}(w_{m+1}\cdot (\rho+V_{m+1}+W_{m+1}\rho)))(\theta+U_{m+1},\xi,\eta)\\
\nonumber&&\cdot\sum_{|\alpha|\geq1}f_{\alpha}^{(m)}(\theta+U_{m+1},\xi,\eta)(\rho+V_{m+1}+W_{m+1}\rho)^{\alpha}\\
\nonumber&+&(w_{m+1}\cdot (g_{0}^{(m)}+g_{1}^{(m)}\cdot (\rho+V_{m+1}+W_{m+1}\rho)))(\theta+U_{m+1},\xi,\eta)\\
\nonumber&+&w_{m+1}(\theta+U_{m+1},\xi,\eta)\cdot\sum_{|\beta|\geq2}g_{\beta}^{(m)}(\theta+U_{m+1},\xi,\eta)(\rho+V_{m+1}+W_{m+1}\rho)^{\beta}\\
\nonumber&+&(R_{(m)}^{(2)}+\partial_{x}v_{m+1}\cdot R_{(m)}^{(1)}+\partial_{x}(w_{m+1}\cdot (\rho+V_{m+1}+W_{m+1}\rho))\cdot R_{(m)}^{(1)}\\
&&+w_{m+1}\cdot R_{(m)}^{(2)})(\theta+U_{m+1},\rho+V_{m+1}+W_{m+1}\rho,\xi,\eta).
\end{eqnarray}

Let
\begin{equation}\label{fc5-36}
h_{(j,m+1)}(\theta,\xi,\eta)=h_{(j,m)}(\theta+U_{m+1},\xi,\eta), \ \ j=0,1,2,\cdots,m,
\end{equation}

\begin{equation}\label{fc5-37}
h_{(m+1,m+1)}(\theta,\xi,\eta)=(\partial_{x}u_{m+1}\cdot h_{(m)})(\theta+U_{m+1},\xi,\eta),
\end{equation}

\begin{equation}\label{fc5-38}
H_{(j,m+1)}(\theta,\rho,\xi,\eta)=H_{(j,m)}(\theta+U_{m+1},\rho+V_{m+1}+W_{m+1}\rho,\xi,\eta), \ \ j=0,1,2,\cdots,m,
\end{equation}

\begin{eqnarray}\label{fc5-39}
\nonumber H_{(m+1,m+1)}(\theta,\rho,\xi,\eta)&=&(\partial_{x}v_{m+1}\cdot h_{(m)}+\partial_{x}(w_{m+1} (\rho+V_{m+1}+W_{m+1}\rho))\cdot h_{(m)}\\
&&+w_{m+1}\cdot H_{(m)})(\theta+U_{m+1},\rho+V_{m+1}+W_{m+1}\rho,\xi,\eta),
\end{eqnarray}

\begin{eqnarray}\label{fc5-44}
\nonumber &&\tilde{f}_{0}^{(m+1)}(\theta,\xi,\eta)+\Sigma_{|\alpha|\geq1}f_{\alpha}^{(m+1)}(\theta,\xi,\eta)\rho^{\alpha}\\
\nonumber&=&(f_{1}^{(m)}\cdot (\rho-w_{m+1} (\rho+V_{m+1}+W_{m+1}\rho)))(\theta+U_{m+1},\xi,\eta)\\
\nonumber&+&\sum_{|\alpha|\geq2}f_{\alpha}^{(m)}(\theta+U_{m+1},\xi,\eta)(\rho+V_{m+1}+W_{m+1}\rho)^{\alpha}\\
\nonumber&+&(\partial_{x}u_{m+1}\cdot f_{0}^{(m)})(\theta+U_{m+1},\xi,\eta)\\
&+&(\partial_{x}u_{m+1})(\theta+U_{m+1},\xi,\eta)\cdot\sum_{|\alpha|\geq1}f_{\alpha}^{(m)}(\theta+U_{m+1},\xi,\eta)(\rho+V_{m+1}+W_{m+1}\rho)^{\alpha},
\end{eqnarray}

\begin{eqnarray}\label{fc5-45}
\nonumber &&\tilde{g}_{0}^{(m+1)}(\theta,\xi,\eta)+\tilde{g}_{1}^{(m+1)}(\theta,\xi,\eta) \rho+\sum_{|\beta|\geq2}\tilde{g}_{\beta}^{(m+1)}(\theta,\xi,\eta)\rho^{\beta}\\
\nonumber&=&\sum_{|\beta|=2}(g_{\beta}^{(m)}(\rho-w_{m+1} (\rho+V_{m+1}+W_{m+1}\rho))^{\beta}-g_{\beta}^{(m)}v^{\beta}_{m+1})(\theta+U_{m+1},\xi,\eta)\\
\nonumber&+&\sum_{|\beta|\geq3}g_{\beta}^{(m)}(\theta+U_{m+1},\xi,\eta)(\rho+V_{m+1}+W_{m+1}\rho)^{\beta}\\
\nonumber&+&(\partial_{x}v_{m+1}\cdot f_{0}^{(m)})(\theta+U_{m+1},\xi,\eta)\\
\nonumber&+&(\partial_{x}v_{m+1})(\theta+U_{m+1},\xi,\eta)\cdot\sum_{|\alpha|\geq2}f_{\alpha}^{(m)}(\theta+U_{m+1},\xi,\eta)(\rho+V_{m+1}+W_{m+1}\rho)^{\alpha}\\
\nonumber&+&(\partial_{x}(w_{m+1}\cdot (\rho+V_{m+1}+W_{m+1}\rho))\cdot f_{0}^{(m)})(\theta+U_{m+1},\xi,\eta)\\
\nonumber&+&(\partial_{x}(w_{m+1}\cdot (\rho+V_{m+1}+W_{m+1}\rho)))(\theta+U_{m+1},\xi,\eta)\\
\nonumber&&\cdot\sum_{|\alpha|\geq1}f_{\alpha}^{(m)}(\theta+U_{m+1},\xi,\eta)(\rho+V_{m+1}+W_{m+1}\rho)^{\alpha}\\
\nonumber&+&(w_{m+1}\cdot (g_{0}^{(m)}+g_{1}^{(m)}\cdot (\rho+V_{m+1}+W_{m+1}\rho)))(\theta+U_{m+1},\xi,\eta)\\
&+&w_{m+1}(\theta+U_{m+1},\xi,\eta)\cdot\sum_{|\beta|\geq2}g_{\beta}^{(m)}(\theta+U_{m+1},\xi,\eta)(\rho+V_{m+1}+W_{m+1}\rho)^{\beta},
\end{eqnarray}

\begin{equation}\label{fc5-40}
R^{(1)}_{(j,m+1)}(\theta,\xi,\eta)=R^{(1)}_{(j,m)}(\theta+U_{m+1},\xi,\eta), \ \ j=0,1,2,\cdots,m,
\end{equation}

\begin{eqnarray}\label{fc5-41}
\nonumber R^{(1)}_{(m+1,m+1)}(\theta,\xi,\eta)&=&(\partial_{x}u_{m+1}\cdot R_{(m)}^{(1)})(\theta+U_{m+1},\xi,\eta)\\
&+&(P_{\gamma,\tau}(f_{0}^{(m)}-f_{1}^{(m)}\cdot v_{m+1}))(\theta+U_{m+1},\xi,\eta),
\end{eqnarray}

\begin{equation}\label{fc5-42}
R^{(2)}_{(j,m+1)}(\theta,\rho,\xi,\eta)=R^{(2)}_{(j,m)}(\theta+U_{m+1},\rho+V_{m+1}+W_{m+1}\rho,\xi,\eta), \ \ j=0,1,2,\cdots,m,
\end{equation}

\begin{eqnarray}\label{fc5-43}
\nonumber R^{(2)}_{(m+1,m+1)}(\theta,\rho,\xi,\eta)&=&(\partial_{x}v_{m+1}\cdot R_{(m)}^{(1)}+\partial_{x}(w_{m+1}\cdot (\rho+V_{m+1}+W_{m+1}\rho))\cdot R_{(m)}^{(1)}\\
\nonumber&&+w_{m+1}\cdot R_{(m)}^{(2)})(\theta+U_{m+1},\rho+V_{m+1}+W_{m+1}\rho,\xi,\eta)\\
\nonumber&+&(P_{\gamma,\tau}g_{0}^{(m)})(\theta+U_{m+1},\xi,\eta)+(P_{\gamma,\tau}(g_{1}^{(m)}-2g_{2}^{(m)}\cdot v_{m+1}\\
&&+\partial_{x}v_{m+1}\cdot f_{1}^{(m)}))(\theta+U_{m+1},\xi,\eta)\cdot (\rho+V_{m+1}+W_{m+1}\rho).
\end{eqnarray}

By  Lemma \ref{lem3-2} and (\ref{fc4-5}), (\ref{fc4-7}), (\ref{fc4-8}), (\ref{fc4-10}), (\ref{fc4-11}), (\ref{fc4-13}), (\ref{fc4-15}), (\ref{fc4-17}), (\ref{fc4-19}), (\ref{fc4-22}), (\ref{fc4-24})-(\ref{fb4-10}), (\ref{fb4-12}), (\ref{fb4-13}), (\ref{fc5-12})-(\ref{fb5-1}), (\ref{fc5-16})-(\ref{fb5-2}), (\ref{fc5-24})-(\ref{fc5-25}), (\ref{fc5-29})-(\ref{fc5-32}), (\ref{fc5-36})-(\ref{fc5-43}), we can rewrite (\ref{fc5-34}) and  (\ref{fc5-35}) as
\begin{equation}\label{fc5-44}
\begin{cases}
\dot{\theta}=\eta+h_{(m+1)}\cdot(\omega_0-\eta+\Lambda_{(m)}(\xi,\eta))+\tilde{f}_{0}^{(m+1)}(\theta,\xi,\eta)+\sum_{|\alpha|\geq1}f_{\alpha}^{(m+1)}(\theta,\xi,\eta)\rho^{\alpha}\\ \ \ \ \ \
+R^{(1)}_{(m+1)},\\
\dot{\rho}=H_{(m+1)}\cdot(\omega_0-\eta+\Lambda_{(m)}(\xi,\eta))+\tilde{g}_{0}^{(m+1)}(\theta,\xi,\eta)+\tilde{g}_{1}^{(m+1)}(\theta,\xi,\eta) \rho\\ \ \ \ \ \
 +\sum_{|\beta|\geq2}\tilde{g}_{\beta}^{(m+1)}(\theta,\xi,\eta)\rho^{\beta}+R^{(2)}_{(m+1)},
\end{cases}
\end{equation}
where
\begin{equation}\label{fc5-45}
h_{(m+1)}=\sum_{j=0}^{m+1}h_{(j,m+1)}(\theta,\xi,\eta),\ h_{(0,m+1)}=\mathbf{1}_{d\times d},\ h_{(j,m+1)}\in C^{\omega,\infty}(D(s^{(5)}_m,0,a,b)),
\end{equation}
\begin{equation}\label{fc5-46}
\|h_{(j,m+1)}\|_{D(s^{(5)}_m,0,a,b),\ell}\leq \frac{C\varepsilon_{j-1}}{\gamma^{2\ell+2}},\ j=1,2,\cdots,m+1;
\end{equation}
\begin{equation}\label{fc5-47}
H_{(m+1)}=\sum_{j=0}^{m+1}H_{(j,m+1)}(\theta,\rho,\xi,\eta),\ H_{(0,m+1)}=\mathbf{O}_{d\times d},\
\end{equation}
\begin{equation}\label{fc5-48}
H_{(j,m+1)}\in C^{\omega,\infty}(D(s^{(5)}_m,r^{(1)}_m,a,b)),
\end{equation}
\begin{equation}\label{fc5-49}
 \|H_{(j,m+1)}\|_{D(s^{(5)}_m,r^{(1)}_m,a,b),\ell}\leq \frac{C\varepsilon_{j-1}}{\gamma^{2\ell+2}},\ j=1,2,\cdots,m+1;
\end{equation}
\begin{equation}\label{fc5-50}
\tilde{f}_{0}^{(m+1)}\in C^{\omega,\infty}(D(s^{(5)}_m,0,a,b)),\ \tilde{f}_{0}^{(m+1)}=O^{N+1}(\xi),
\end{equation}
\begin{equation}\label{fb5-30}
 \|\tilde{f}_{0}^{(m+1)}\|_{D(s^{(5)}_m,0,a,b),\ell}\leq \frac{C\varepsilon^2_m}{\gamma^{4\ell+4}}\leq C\varepsilon_{m+1},\
\end{equation}
\begin{equation}\label{fc5-51}
\tilde{g}_{0}^{(m+1)}\in C^{\omega,\infty}(D(s^{(5)}_m,0,a,b)),\ \tilde{g}_{0}^{(m+1)}=O^{N+1}(\xi),
\end{equation}
\begin{equation}\label{fb5-31} \|\tilde{g}_{0}^{(m+1)}\|_{D(s^{(5)}_m,0,a,b),\ell}\leq \frac{C\varepsilon^2_m}{\gamma^{4\ell+4}}\leq C\varepsilon_{m+1},\
\end{equation}
\begin{equation}\label{fc5-52}
\tilde{g}_{1}^{(m+1)}\in C^{\omega,\infty}(D(s^{(5)}_m,0,a,b)),\ \tilde{g}_{1}^{(m+1)}=O^{N+1}(\xi),
\end{equation}
\begin{equation}\label{fb5-32}
 \|\tilde{g}_{1}^{(m+1)}\|_{D(s^{(5)}_m,0,a,b),\ell}\leq \frac{C\varepsilon^2_m}{\gamma^{4\ell+4}}\leq C\varepsilon_{m+1},\
\end{equation}
\begin{equation}\label{fc5-55}
\tilde{g}_{\beta}^{(m+1)}\in C^{\omega,\infty}(D(s^{(5)}_m,0,a,b)),\ |\beta|\geq2,
\end{equation}
\begin{equation}\label{fc5-56}
 \|\sum_{|\beta|\geq2}\tilde{g}_{\beta}^{(m+1)}(\theta,\xi,\eta)\rho^{\beta}\|_{D(s^{(5)}_m,r^{(1)}_m,a,b),\ell}\leq Cr^2_{m+1},
\end{equation}
\begin{equation}\label{fc5-53}
f_{\alpha}^{(m+1)}\in C^{\omega,\infty}(D(s^{(5)}_m,0,a,b)),\ |\alpha|\geq1,
\end{equation}
\begin{equation}\label{fc5-54}
 \|\sum_{|\alpha|\geq1}f_{\alpha}^{(m+1)}(\theta,\xi,\eta)\rho^{\alpha}\|_{D(s^{(5)}_m,r^{(1)}_m,a,b),\ell}\leq Cr_{m+1},
\end{equation}
\begin{equation}\label{fc5-57}
R_{(m+1)}^{(1)}=\sum_{j=0}^{m+1}R_{(j,m+1)}^{(1)}(\theta,\xi,\eta),\ R^{(1)}_{(0,m+1)}=0,
\end{equation}
\begin{equation}\label{fc5-58}
R_{(j,m+1)}^{(1)}\in C^{\omega,\infty}(D(s^{(6)}_m,0,a,b)),
\end{equation}
\begin{equation}\label{fc5-59}
\|R_{(j,m+1)}^{(1)}\|_{D(s^{(6)}_m,0,a,b),\ell}\leq \frac{C\varepsilon_{j-1}}{\gamma^{2\ell+2}},\ j=1,2,\cdots,m+1;
\end{equation}
\begin{equation}\label{fc5-60}
 R_{(m+1)}^{(2)}=\sum_{j=0}^{m+1}R_{(j,m+1)}^{(2)}(\theta,\rho,\xi,\eta),\ R^{(2)}_{(0,m+1)}=0,
\end{equation}
\begin{equation}\label{fc5-61}
R_{(j,m+1)}^{(2)}\in C^{\omega,\infty}(D(s^{(6)}_m,r^{(1)}_m,a,b)),
\end{equation}
\begin{equation}\label{fc5-62}
\|R_{(j,m+1)}^{(2)}\|_{D(s^{(6)}_m,r^{(1)}_m,a,b),\ell}\leq \frac{C\varepsilon_{j-1}}{\gamma^{2\ell+2}},\ j=1,2,\cdots,m+1.
\end{equation}

\begin{lem}\label{lem5-1} $R_{(j,m+1)}^{(1)}$, $R_{(j,m+1)}^{(2)}$ {\rm(}$j=1,2,\cdots,m+1${\rm)} are flat on $DC(\gamma,\tau)$.
\end{lem}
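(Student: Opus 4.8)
The plan is to reduce the flatness of every remainder $R^{(i)}_{(j,m+1)}$ to three stability properties of $(\gamma,\tau)$-flat functions and then read off the conclusion from the explicit formulas (\ref{fc5-40})--(\ref{fc5-43}). These properties are: \textbf{(P1)} if $R$ is $(\gamma,\tau)$-flat, then so are the compositions $R(\theta+U_{m+1},\xi,\eta)$ and $R(\theta+U_{m+1},\rho+V_{m+1}+W_{m+1}\rho,\xi,\eta)$, so that flatness survives the near-identity substitution $\Psi_{m+1}$; \textbf{(P2)} the product of a $(\gamma,\tau)$-flat function with any $C^{\omega,\infty}$ function is $(\gamma,\tau)$-flat; and \textbf{(P3)} $P_{\gamma,\tau}f$ is $(\gamma,\tau)$-flat for every $f\in C^{\omega,\infty}(D(s,r,a,b))$.

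I would begin with (P3), which is the conceptual core. By the definition (\ref{fb3-20}), $P_{\gamma,\tau}f=\sum_{k\in\mathbf{Z}^d\setminus\{0\}}\hat f(k,y,\xi,\eta)\,e^{i\langle k,x\rangle}\,\tilde l_k(\eta)$ with $\tilde l_k(\eta):=\tilde l(\langle k,\eta\rangle|k|^{\tau}/\gamma)$. If $\eta\in DC(\gamma,\tau)$, then $|\langle k,\eta\rangle|\geq\gamma/|k|^{\tau}$, so the argument of $\tilde l$ has modulus $\geq1>\tfrac12$; since $\tilde l\equiv0$ on $\{|x|\geq\tfrac12\}$ all derivatives $\tilde l^{(p)}$ vanish there, whence $\partial_\eta^q\tilde l_k(\eta)=0$ for every $q$ at every such $\eta$. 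Thus each $\tilde l_k$ is $(\gamma,\tau)$-flat, each summand is flat by (P2), and the $C^\ell$-estimates of Lemma \ref{lem3-2} justify term-by-term differentiation of the series; hence all derivatives of $P_{\gamma,\tau}f$ vanish on $DC(\gamma,\tau)$, which is (P3).

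Property (P2) is the Leibniz rule: in $\partial^\alpha(g\cdot R)=\sum_{\beta\leq\alpha}\binom{\alpha}{\beta}\partial^\beta g\,\partial^{\alpha-\beta}R$ every factor $\partial^{\alpha-\beta}R$ (including $R$ itself, at $\beta=\alpha$) vanishes when $\eta\in DC(\gamma,\tau)$. Property (P1), the one point I would write out with care, follows from the chain rule together with the facts that $u_{m+1},v_{m+1},w_{m+1},U_{m+1},V_{m+1},W_{m+1}\in C^{\omega,\infty}$ (shown above in this section) and, crucially, that $\Psi_{m+1}$ acts only on the $(x,y)$-slot while leaving $\eta$ untouched: any derivative of the composite is a finite sum of products, each carrying a factor $(\partial_\theta^{a}\partial_\rho^{b}\partial_\xi^{c}\partial_\eta^{d}R)$ evaluated at the \emph{same} $\eta$, and this factor is $0$ when $\eta\in DC(\gamma,\tau)$. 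The delicate issue is that differentiating in $\eta$ might seem to spoil flatness because $U_{m+1},V_{m+1},W_{m+1}$ depend on $\eta$; the resolution is precisely that $\eta$ is passed through unchanged, so every chain-rule term still evaluates a derivative of a flat function at an admissible $\eta$.

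With (P1)--(P3) the lemma is bookkeeping. For $1\leq j\leq m$, formulas (\ref{fc5-40}) and (\ref{fc5-42}) present $R^{(1)}_{(j,m+1)}$ and $R^{(2)}_{(j,m+1)}$ as compositions under $\Psi_{m+1}$ of $R^{(1)}_{(j,m)}$ and $R^{(2)}_{(j,m)}$, which are flat by the induction hypotheses (\ref{fc4-24}) and (\ref{fb4-12}); so (P1) gives flatness. For the new index $j=m+1$, formula (\ref{fc5-41}) writes $R^{(1)}_{(m+1,m+1)}$ as the $\Psi_{m+1}$-composition of $\partial_xu_{m+1}\cdot R^{(1)}_{(m)}$ — flat by (P2), since $R^{(1)}_{(m)}=\sum_{j=0}^mR^{(1)}_{(j,m)}$ is flat — plus the $\Psi_{m+1}$-composition of $P_{\gamma,\tau}(f_0^{(m)}-f_1^{(m)}\cdot v_{m+1})$, flat by (P3); applying (P1) to each summand finishes it. Formula (\ref{fc5-43}) is handled identically: its three substitution terms each carry a factor $R^{(1)}_{(m)}$ or $R^{(2)}_{(m)}$ and are flat by (P2), while its two $P_{\gamma,\tau}$-terms are flat by (P3), and (P1) then yields the flatness of $R^{(2)}_{(m+1,m+1)}$.
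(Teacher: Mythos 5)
Your proposal is correct and follows essentially the same route as the paper: the same reduction to formulas (\ref{fc5-40})--(\ref{fc5-43}), the induction hypotheses (\ref{fc4-24}), (\ref{fb4-12}) for the old terms, flatness of the $P_{\gamma,\tau}$-terms coming from the support condition on the cutoff $\tilde{l}$ in (\ref{fb3-20}), and closure of flatness under products and under the substitution $\Psi_{m+1}$ because the $\eta$-slot is passed through unchanged. The only cosmetic difference is that where you justify your (P1) by the chain rule, the paper expands the composition by the Taylor formula in the angle variable, e.g. $R^{(1)}_{(j,m)}(\theta+U_{m+1},\xi,\eta)=\sum_{|p|\geq0}\frac{(\partial_{x}^{p}R^{(1)}_{(j,m)})(\theta,\xi,\eta)}{p!}\,U_{m+1}^{p}$, and then applies your (P2) termwise --- both arguments rest on the identical observation, so this is not a genuinely different proof.
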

 \begin{proof} By Taylor formula and (\ref{fc5-40}), we have
\begin{eqnarray}\label{fc5-63}
\nonumber R^{(1)}_{(j,m+1)}(\theta,\xi,\eta)&=&R^{(1)}_{(j,m)}(\theta+U_{m+1},\xi,\eta)\\
&=&\sum_{|p|\geq0}\frac{(\partial_{x}^pR^{(1)}_{(j,m)})(\theta,\xi,\eta)}{p!}U_{m+1}^p,
\end{eqnarray}
where $j=1,2,\cdots,m$. By (\ref{fc4-24}), (\ref{fc5-63}) and Definition \ref{defn3-1}, we have that $R_{(j,m+1)}^{(1)}$ ($j=1,2,\cdots,m$) is flat on $DC(\gamma,\tau)$.

By Taylor formula and (\ref{fc5-41}), we have
\begin{eqnarray}\label{fc5-64}
\nonumber R^{(1)}_{(m+1,m+1)}(\theta,\xi,\eta)&=&\partial_{x}u_{m+1}(\theta+U_{m+1},\xi,\eta)\cdot \sum_{|p|\geq0}\frac{(\partial_{x}^pR_{(m)}^{(1)})(\theta,\xi,\eta)}{p!}U_{m+1}^p\\
&+&\sum_{|p|\geq0}\frac{(\partial_{x}^p(P_{\gamma,\tau}(f_{0}^{(m)}-f_{1}^{(m)}\cdot v_{m+1})))(\theta,\xi,\eta)}{p!}U_{m+1}^p.
\end{eqnarray}
By (\ref{fb3-20}), (\ref{fc4-22}), (\ref{fc4-24}), (\ref{fc5-64}) and Definition \ref{defn3-1}, we have that $R_{(m+1,m+1)}^{(1)}$ is flat on $DC(\gamma,\tau)$.

Similarly, by Definition \ref{defn3-1},  (\ref{fb3-20}), (\ref{fc4-22}), (\ref{fc4-24}), (\ref{fb4-10}), (\ref{fb4-12}), (\ref{fc5-42}) and  (\ref{fc5-43}), we have that $R_{(j,m+1)}^{(2)}$ ($j=1,2,\cdots,m+1$) is flat on $DC(\gamma,\tau)$.
  \end{proof}

Now let us consider the reversibility of the changed system. By (\ref{fc4-12}), (\ref{fc4-16}), (\ref{fc4-18}), (\ref{fc4-20}), (\ref{fb5-100}), (\ref{fc5-17}) and (\ref{fb5-101}), we have
\begin{equation}\label{fc5-65}
u_{m+1}(-x,\xi,\eta)=-u_{m+1}(x,\xi,\eta),
\end{equation}
\begin{equation}\label{fc5-66}
w_{m+1}(-x,\xi,\eta)=w_{m+1}(x,\xi,\eta).
\end{equation}
Then by $\Phi_{m+1}\circ\Psi_{m+1}=id$, we get
\begin{equation}\label{fc5-67}
u_{m+1}(x,\xi,\eta)+U_{m+1}(x+u_{m+1}(x,\xi,\eta),\xi,\eta)=0,
\end{equation}
\begin{eqnarray}\label{fc5-68}
\nonumber &&v_{m+1}(x,\xi,\eta)+w_{m+1}(x,\xi,\eta) y+V_{m+1}(x+u_{m+1}(x,\xi,\eta),\xi,\eta)\\
&=&-W_{m+1}(x+u_{m+1}(x,\xi,\eta),\xi,\eta) (y+v_{m+1}(x,\xi,\eta)+w_{m+1}(x,\xi,\eta) y).
\end{eqnarray}
By (\ref{fc5-17}), (\ref{fc5-65})-(\ref{fc5-68}), we have
\begin{equation}\label{fc5-69}
U_{m+1}(-\theta,\xi,\eta)=-U_{m+1}(\theta,\xi,\eta),
\end{equation}
\begin{equation}\label{fc5-70}
V_{m+1}(-\theta,\xi,\eta)=V_{m+1}(\theta,\xi,\eta),
\end{equation}
\begin{equation}\label{fc5-71}
W_{m+1}(-\theta,\xi,\eta)=W_{m+1}(\theta,\xi,\eta),
\end{equation}
where $(\theta,\xi,\eta)\in D(s^{(5)}_m,0,a,b)$. It follows that
\begin{equation}\label{fc5-74}
\tilde{f}_{0}^{(m+1)}(-\theta,\xi,\eta)=\tilde{f}_{0}^{(m+1)}(\theta,\xi,\eta),
\end{equation}
\begin{equation}\label{fc5-75}
\tilde{g}_{0}^{(m+1)}(-\theta,\xi,\eta)=-\tilde{g}_{0}^{(m+1)}(\theta,\xi,\eta),
\end{equation}
\begin{equation}\label{fc5-75-1}
\tilde{g}_{1}^{(m+1)}(-\theta,\xi,\eta)=-\tilde{g}_{1}^{(m+1)}(\theta,\xi,\eta),
\end{equation}
\begin{equation}\label{fc5-75-2}
\tilde{g}_{\beta}^{(m+1)}(-\theta,\xi,\eta)=-\tilde{g}_{\beta}^{(m+1)}(\theta,\xi,\eta), \ \forall|\beta|\geq2,
\end{equation}
\begin{equation}\label{fc5-72}
h_{(j,m+1)}(-\theta,\xi,\eta)=h_{(j,m+1)}(\theta,\xi,\eta),
\end{equation}
\begin{equation}\label{fc5-73}
H_{(j,m+1)}(-\theta,\rho,\xi,\eta)=-H_{(j,m+1)}(\theta,\rho,\xi,\eta),
\end{equation}
\begin{equation}\label{fc5-76}
f_{\alpha}^{(m+1)}(-\theta,\xi,\eta)=f_{\alpha}^{(m+1)}(\theta,\xi,\eta),\ \forall|\alpha|\geq1,
\end{equation}
\begin{equation}\label{fc5-77}
R^{(1)}_{(j,m+1)}(-\theta,\xi,\eta)=R^{(1)}_{(j,m+1)}(\theta,\xi,\eta),
\end{equation}
\begin{equation}\label{fc5-78}
R^{(2)}_{(j,m+1)}(-\theta,\rho,\xi,\eta)=-R^{(2)}_{(j,m+1)}(\theta,\rho,\xi,\eta),
\end{equation}
where $(\theta,\rho,\xi,\eta)\in D(s^{(5)}_m,r^{(1)}_m,a,b)$, $j=1,2,\cdots,m+1$.

\begin{lem}\label{lem5-2} There exists $\Gamma=\Gamma(\xi,\eta)\in C^{\omega,\infty}(D(0,0,a,b))$ with
\begin{equation}\label{fb5-90}
\|\Gamma\|_{D(0,0,a,b),\ell}\leq \frac{C\varepsilon_{m+1}}{\gamma^{\ell+1}},\ \Gamma=O^{N+1}(\xi)
\end{equation}
such that {\rm(\ref{fc5-44})} can be rewritten as
\begin{equation}\label{fc5-79}
\begin{cases}
\dot{\theta}=\eta+h_{(m+1)}\cdot(\omega_0-\eta+\Lambda_{(m+1)}(\xi,\eta))+f_{0}^{(m+1)}(\theta,\xi,\eta)+\sum_{|\alpha|\geq1}f_{\alpha}^{(m+1)}(\theta,\xi,\eta)\rho^{\alpha}\\ \ \ \ \ \
+R^{(1)}_{(m+1)},\\
\dot{\rho}=H_{(m+1)}\cdot(\omega_0-\eta+\Lambda_{(m+1)}(\xi,\eta))+g_{0}^{(m+1)}(\theta,\xi,\eta)+g_{1}^{(m+1)}(\theta,\xi,\eta) \rho\\ \ \ \ \ \
 +\sum_{|\beta|\geq2}g_{\beta}^{(m+1)}(\theta,\xi,\eta)\rho^{\beta}+R^{(2)}_{(m+1)},
\end{cases}
\end{equation}
where $\Lambda_{(m+1)}=\Lambda_{(00)}+\bar{\Lambda}_{(m+1)}$, $\bar{\Lambda}_{(m+1)}=\bar{\Lambda}_{(m)}+\Gamma$ and

\begin{equation}\label{fc5-81}
f_{0}^{(m+1)}\in C^{\omega,\infty}(D(s_{m+1},0,a,b)), f_{0}^{(m+1)}=O^{N+1}(\xi), \|f_{0}^{(m+1)}\|_{D(s_{m+1},0,a,b),\ell}\leq C\varepsilon_{m+1},
\end{equation}
\begin{equation}\label{fc5-82}
f_{0}^{(m+1)}(-\theta,\xi,\eta)=f_{0}^{(m+1)}(\theta,\xi,\eta);
\end{equation}
\begin{equation}\label{fc5-83}
g_{0}^{(m+1)}\in C^{\omega,\infty}(D(s_{m+1},0,a,b)), g_{0}^{(m+1)}=O^{N+1}(\xi), \|g_{0}^{(m+1)}\|_{D(s_{m+1},0,a,b),\ell}\leq C\varepsilon_{m+1},
\end{equation}
\begin{equation}\label{fc5-84}
 g_{0}^{(m+1)}(-\theta,\xi,\eta)=-g_{0}^{(m+1)}(\theta,\xi,\eta);
\end{equation}
\begin{equation}\label{fc5-85}
g_{1}^{(m+1)}\in C^{\omega,\infty}(D(s_{m+1},0,a,b)), g_{1}^{(m+1)}=O^{N+1}(\xi), \|g_{1}^{(m+1)}\|_{D(s_{m+1},0,a,b),\ell}\leq C\varepsilon_{m+1},
\end{equation}
\begin{equation}\label{fc5-86}
 g_{1}^{(m+1)}(-\theta,\xi,\eta)=-g_{1}^{(m+1)}(\theta,\xi,\eta);
\end{equation}
\begin{equation}\label{fc5-87}
g_{\beta}^{(m+1)}\in C^{\omega,\infty}(D(s_{m+1},0,a,b)),\ \forall|\beta|\geq2,
\end{equation}
\begin{equation}\label{fc5-88}
 \|\sum_{|\beta|\geq2}g_{\beta}^{(m+1)}(\theta,\xi,\eta)\rho^{\beta}\|_{D(s_{m+1},r_{m+1},a,b),\ell}\leq Cr^2_{m+1},
\end{equation}
\begin{equation}\label{fc5-89}
 g_{\beta}^{(m+1)}(-\theta,\xi,\eta)=-g_{\beta}^{(m+1)}(\theta,\xi,\eta),\ \forall|\beta|\geq2;
\end{equation}
\begin{equation}\label{fc5-90}
(f_{0}^{(m+1)}-f_{1}^{(m+1)}\cdot \mathcal{L}g_{0}^{(m+1)})^{\hat{}}(0,\xi,\eta)=0.
\end{equation}
\end{lem}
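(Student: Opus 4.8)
The plan is to fix the counter-term $\Gamma=\Gamma(\xi,\eta)$ by the single ($d$-vector) solvability condition (\ref{fc5-90}), and to read off all the coefficients of (\ref{fc5-79}) from those of (\ref{fc5-44}) by the bookkeeping substitution $\Lambda_{(m)}\mapsto\Lambda_{(m+1)}=\Lambda_{(m)}+\Gamma$. Since $\Lambda_{(m+1)}=\Lambda_{(m)}+\Gamma$ gives $h_{(m+1)}\cdot(\omega_0-\eta+\Lambda_{(m)})=h_{(m+1)}\cdot(\omega_0-\eta+\Lambda_{(m+1)})-h_{(m+1)}\cdot\Gamma$, I would set
\[
f_0^{(m+1)}:=\tilde f_0^{(m+1)}-h_{(m+1)}\cdot\Gamma ,
\]
which is still $\rho$-independent because $h_{(m+1)}$ and $\Gamma$ are. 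For the $\rho$-equation I would expand the $\rho$-dependent vector $H_{(m+1)}\cdot\Gamma$ into its homogeneous parts $(H_{(m+1)}\cdot\Gamma)^{[\beta]}\rho^{\beta}$, $|\beta|\ge0$, and set
\[
g_0^{(m+1)}:=\tilde g_0^{(m+1)}-(H_{(m+1)}\cdot\Gamma)^{[0]},\quad g_1^{(m+1)}:=\tilde g_1^{(m+1)}-(H_{(m+1)}\cdot\Gamma)^{[1]},
\]
and $g_\beta^{(m+1)}:=\tilde g_\beta^{(m+1)}-(H_{(m+1)}\cdot\Gamma)^{[\beta]}$ for $|\beta|\ge2$. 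With these definitions (\ref{fc5-44}) becomes (\ref{fc5-79}) identically, for every choice of $\Gamma$; note that $f_\alpha^{(m+1)}$ ($|\alpha|\ge1$), and in particular $f_1^{(m+1)}$, are untouched, since the counter-term only affects the $\rho$-independent part of the $\theta$-equation.

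It then remains to select $\Gamma$ so that (\ref{fc5-90}) holds. Substituting the definitions above and passing to the $0$-th Fourier coefficient, (\ref{fc5-90}) turns into an affine equation $\mathcal A\,\Gamma=F_0$ for $\Gamma$, where
\[
F_0:=\bigl(\tilde f_0^{(m+1)}-f_1^{(m+1)}\cdot\mathcal L\tilde g_0^{(m+1)}\bigr)^{\hat{}}(0,\xi,\eta),
\]
\[
\mathcal A\,\Gamma:=\bigl(h_{(m+1)}\cdot\Gamma\bigr)^{\hat{}}(0,\xi,\eta)-\bigl(f_1^{(m+1)}\cdot\mathcal L\bigl((H_{(m+1)}\cdot\Gamma)^{[0]}\bigr)\bigr)^{\hat{}}(0,\xi,\eta).
\]
Here $\mathcal L\tilde g_0^{(m+1)}$ and $\mathcal L\bigl((H_{(m+1)}\cdot\Gamma)^{[0]}\bigr)$ are well defined because $\tilde g_0^{(m+1)}$ and $(H_{(m+1)}\cdot\Gamma)^{[0]}$ are odd in $\theta$ (by (\ref{fc5-75}) and (\ref{fc5-73})), so their $0$-th Fourier coefficients vanish. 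The crucial structural point is that $h_{(0,m+1)}=\mathbf 1_{d\times d}$, so the leading part of $(h_{(m+1)}\cdot\Gamma)^{\hat{}}(0)$ is exactly $\Gamma$; every remaining term of $\mathcal A$ carries a factor $h_{(j,m+1)}$ ($j\ge1$) or $H_{(m+1)}$ and is therefore $O(\varepsilon_0)$ by (\ref{fc5-46}), (\ref{fc5-49}) and the boundedness of $\mathcal L$ (cf. (\ref{fc5-12})). Hence $\mathcal A=\mathbf 1+O(\varepsilon_0)$ is invertible once $a$, and thus $\varepsilon_0$, is small enough, and I would define $\Gamma:=\mathcal A^{-1}F_0$.

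The estimate (\ref{fb5-90}) then follows at once: by (\ref{fb5-30}), (\ref{fb5-31}), the bound $\|f_1^{(m+1)}\|\le C$ coming from (\ref{fc5-54}) and a Cauchy estimate, and the boundedness of $\mathcal L$, one gets $\|F_0\|_{D(0,0,a,b),\ell}\le C\varepsilon_{m+1}$, whence $\|\Gamma\|_{D(0,0,a,b),\ell}\le\|\mathcal A^{-1}\|\,\|F_0\|\le C\varepsilon_{m+1}$; moreover $\tilde f_0^{(m+1)}$ and $\tilde g_0^{(m+1)}$ are $O^{N+1}(\xi)$ by (\ref{fc5-50}), (\ref{fc5-51}), and since both $\mathcal L$ and $\mathcal A^{\pm1}$ preserve the ideal of $O^{N+1}(\xi)$ functions, $\Gamma=O^{N+1}(\xi)$. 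Feeding $\|\Gamma\|\le C\varepsilon_{m+1}$ back into the definitions gives the size bounds (\ref{fc5-81}), (\ref{fc5-83}), (\ref{fc5-85}) and (\ref{fc5-88}) (the corrections $(H_{(m+1)}\cdot\Gamma)^{[1]}$, $(H_{(m+1)}\cdot\Gamma)^{[\beta]}$ to $g_1^{(m+1)},g_\beta^{(m+1)}$ are even smaller, of order $\varepsilon_0\varepsilon_{m+1}$), while the parities (\ref{fc5-82}), (\ref{fc5-84}), (\ref{fc5-86}), (\ref{fc5-89}) and the $O^{N+1}(\xi)$ claims follow from the corresponding properties of the tilde-quantities, from (\ref{fc5-72})--(\ref{fc5-75}), and from the fact that $\Gamma$ does not depend on $\theta$. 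The one genuine difficulty is that $\Gamma$ appears on both sides of (\ref{fc5-90}) — inside $g_0^{(m+1)}$, hence inside $\mathcal L g_0^{(m+1)}$ — so that $\Gamma$ cannot be written by an explicit formula but must be produced by inverting $\mathcal A$; what makes this harmless is precisely the normalization $h_{(0,m+1)}=\mathbf 1_{d\times d}$, which puts the identity as the dominant part of $\mathcal A$.
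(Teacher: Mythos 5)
Your proposal is correct and takes essentially the same route as the paper: the paper likewise sets $f_0^{(m+1)}=\tilde f_0^{(m+1)}-h_{(m+1)}\cdot\Gamma$ and $g_0^{(m+1)}=\tilde g_0^{(m+1)}-\bar H_{(m+1)}\cdot\Gamma$ with $\bar H_{(m+1)}=H_{(m+1)}(\theta,0,\xi,\eta)$ (your $(H_{(m+1)}\cdot\Gamma)^{[0]}$), reduces (\ref{fc5-90}) to the affine equation $M-(\mathbf{1}_{d\times d}+N)\cdot\Gamma=0$ in (\ref{fc5-105}) --- exactly your $\mathcal{A}\,\Gamma=F_0$, with $M$, $N$ as in (\ref{fc5-101}), (\ref{fc5-102}) --- and solves it by $\Gamma=(\mathbf{1}_{d\times d}+N)^{-1}\cdot M$, using $\|N\|_{D(0,0,a,b),\ell}\leq C\varepsilon_0$ to invert. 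Your estimates ($\|F_0\|\leq C\varepsilon_{m+1}$, preservation of $O^{N+1}(\xi)$, parity via (\ref{fc5-72})--(\ref{fc5-75})) match the paper's derivation of (\ref{fc5-103}), (\ref{fc5-104}) and (\ref{fb5-90}).
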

 \begin{proof} Let
 \begin{equation}\label{fc5-91}
\Lambda_{(m+1)}=\Lambda_{(00)}+\bar{\Lambda}_{(m+1)},\ \bar{\Lambda}_{(m+1)}=\bar{\Lambda}_{(m)}+\Gamma,
\end{equation}
 \begin{equation}\label{fc5-92}
h_{(m+1)}\cdot\Gamma(\xi,\eta)+f_{0}^{(m+1)}(\theta,\xi,\eta)=\tilde{f}_{0}^{(m+1)}(\theta,\xi,\eta),
\end{equation}
\begin{eqnarray}\label{fc5-93}
\nonumber &&H_{(m+1)}\cdot\Gamma(\xi,\eta)+g_{0}^{(m+1)}(\theta,\xi,\eta)+g_{1}^{(m+1)}(\theta,\xi,\eta) \rho
+\sum_{|\beta|\geq2}g_{\beta}^{(m+1)}(\theta,\xi,\eta)\rho^{\beta}\\
&&=\tilde{g}_{0}^{(m+1)}(\theta,\xi,\eta)+\tilde{g}_{1}^{(m+1)}(\theta,\xi,\eta) \rho
 +\sum_{|\beta|\geq2}\tilde{g}_{\beta}^{(m+1)}(\theta,\xi,\eta)\rho^{\beta}.
\end{eqnarray}
It follows that (\ref{fc5-44}) can be rewritten as (\ref{fc5-79}). By (\ref{fc5-93}), we have
 \begin{equation}\label{fc5-94}
g_{0}^{(m+1)}(\theta,\xi,\eta)=\tilde{g}_{0}^{(m+1)}(\theta,\xi,\eta)-\bar{H}_{(m+1)}\cdot\Gamma(\xi,\eta),
\end{equation}
where $\bar{H}_{(m+1)}=H_{(m+1)}(\theta,0,\xi,\eta)$.
By (\ref{fc5-47}), (\ref{fc5-75}), (\ref{fc5-73}) and (\ref{fc5-94}), we have
 \begin{equation}\label{fc5-95}
g_{0}^{(m+1)}(-\theta,\xi,\eta)=-g_{0}^{(m+1)}(\theta,\xi,\eta).
\end{equation}
It follows that $\mathcal{L}g_{0}^{(m+1)}$ exists (using the same method as in solving (\ref{fc5-5})). Similarly, by (\ref{fc5-47}), (\ref{fc5-75}) and (\ref{fc5-73}), $\mathcal{L}\tilde{g}_{0}^{(m+1)}$ and $\mathcal{L}\bar{H}_{(m+1)}$ exist. By (\ref{fc5-92}) and (\ref{fc5-94}), we have
\begin{eqnarray}\label{fc5-96}
\nonumber &&(f_{0}^{(m+1)}-f_{1}^{(m+1)}\cdot \mathcal{L}g_{0}^{(m+1)})^{\hat{}}(0,\xi,\eta)\\
\nonumber &=&(\tilde{f}_{0}^{(m+1)}-h_{(m+1)}\cdot\Gamma-f_{1}^{(m+1)}\cdot \mathcal{L}(\tilde{g}_{0}^{(m+1)}-\bar{H}_{(m+1)}\cdot\Gamma))^{\hat{}}(0,\xi,\eta)\\
\nonumber &=&(\tilde{f}_{0}^{(m+1)}-f_{1}^{(m+1)}\cdot \mathcal{L}\tilde{g}_{0}^{(m+1)})^{\hat{}}(0,\xi,\eta)\\
&&-(\mathbf{1}_{d\times d}+(\sum_{j=1}^{m+1}h_{(j,m+1)}-f_{1}^{(m+1)}\cdot \mathcal{L}\bar{H}_{(m+1)})^{\hat{}}(0,\xi,\eta))\cdot\Gamma.
\end{eqnarray}
By (\ref{fc5-47})-(\ref{fc5-49}), (\ref{fc5-51}), (\ref{fb5-31}), (\ref{fc5-75}), (\ref{fc5-73}) and using the same method as in proving (\ref{fc5-12}) and (\ref{fb5-4}), we have
\begin{equation}\label{fc5-97}
\mathcal{L}\tilde{g}_{0}^{(m+1)}\in C^{\omega,\infty}(D(s^{(6)}_m,0,a,b)),\ \mathcal{L}\tilde{g}_{0}^{(m+1)}=O^{N+1}(\xi),
\end{equation}
\begin{equation}\label{fb5-98}
 \|\mathcal{L}\tilde{g}_{0}^{(m+1)}\|_{D(s^{(6)}_m,0,a,b),\ell}\leq \frac{C\varepsilon_{m+1}}{\gamma^{\ell+1}},\
\end{equation}
\begin{equation}\label{fc5-99}
\mathcal{L}\bar{H}_{(m+1)}\in C^{\omega,\infty}(D(s^{(6)}_m,0,a,b)),\  \|\mathcal{L}\bar{H}_{(m+1)}\|_{D(s^{(6)}_m,0,a,b),\ell}\leq \frac{C\varepsilon_0}{\gamma^{3\ell+3}}.
\end{equation}
By (\ref{fc5-45}) and (\ref{fc5-46}), we have
\begin{equation}\label{fc5-100}
\sum_{j=1}^{m+1}h_{(j,m+1)}\in C^{\omega,\infty}(D(s^{(5)}_m,0,a,b)),\ \|\sum_{j=1}^{m+1}h_{(j,m+1)}\|_{D(s^{(5)}_m,0,a,b),\ell}\leq \frac{C\varepsilon_0}{\gamma^{2\ell+2}}.
\end{equation}

Let
\begin{equation}\label{fc5-101}
M(\xi,\eta)=(\tilde{f}_{0}^{(m+1)}-f_{1}^{(m+1)}\cdot \mathcal{L}\tilde{g}_{0}^{(m+1)})^{\hat{}}(0,\xi,\eta),
\end{equation}
\begin{equation}\label{fc5-102}
N(\xi,\eta)=(\sum_{j=1}^{m+1}h_{(j,m+1)}-f_{1}^{(m+1)}\cdot \mathcal{L}\bar{H}_{(m+1)})^{\hat{}}(0,\xi,\eta).
\end{equation}
By (\ref{fc5-50}), (\ref{fb5-30}), (\ref{fc5-53}), (\ref{fc5-54}), (\ref{fc5-97})-(\ref{fc5-102}), we have
\begin{equation}\label{fc5-103}
M\in C^{\omega,\infty}(D(0,0,a,b)),\ M=O^{N+1}(\xi),\ \|M\|_{D(0,0,a,b),\ell}\leq \frac{C\varepsilon_{m+1}}{\gamma^{\ell+1}},
\end{equation}
\begin{equation}\label{fc5-104}
N\in C^{\omega,\infty}(D(0,0,a,b)),\ \|N\|_{D(0,0,a,b),\ell}\leq \frac{C\varepsilon_0}{\gamma^{3\ell+3}}.
\end{equation}

Let
\begin{equation}\label{fc5-105}
M-(\mathbf{1}_{d\times d}+N)\cdot\Gamma=0.
\end{equation}
By (\ref{fc5-103}) and (\ref{fc5-104}), the solution of equation (\ref{fc5-105}) is
\begin{equation}\label{fc5-106}
\Gamma=(\mathbf{1}_{d\times d}+N)^{-1}\cdot M,
\end{equation}
where $\Gamma\in C^{\omega,\infty}(D(0,0,a,b))$, $\|\Gamma\|_{D(0,0,a,b),\ell}\leq \frac{C\varepsilon_{m+1}}{\gamma^{\ell+1}}$, $\Gamma=O^{N+1}(\xi)$.

Then (\ref{fc5-90}) is satisfied by (\ref{fc5-96}), (\ref{fc5-101}), (\ref{fc5-102}), (\ref{fc5-105}) and (\ref{fc5-106}). (\ref{fc5-81}) and (\ref{fc5-82}) are satisfied by (\ref{fc5-45}), (\ref{fc5-46}), (\ref{fc5-50}), (\ref{fb5-30}), (\ref{fc5-74}), (\ref{fc5-72}), (\ref{fc5-92}) and (\ref{fc5-106}). (\ref{fc5-83})-(\ref{fc5-89}) are satisfied by (\ref{fc5-47})-(\ref{fc5-49}), (\ref{fc5-51})-(\ref{fc5-56}), (\ref{fc5-75})-(\ref{fc5-75-2}), (\ref{fc5-73}), (\ref{fc5-93}) and (\ref{fc5-106}). This completes the proof of Lemma \ref{lem5-2}.
  \end{proof}

The proof of iterative lemma (Lemma \ref{lem4-1}) is finished by  (\ref{fc5-45})-(\ref{fc5-49}), (\ref{fc5-53})-(\ref{fc5-62}), (\ref{fc5-72})-(\ref{fc5-78}), Lemma \ref{lem5-1} and Lemma \ref{lem5-2}.

\section{KAM counterterm theorem}\label{sec6}
In the iterative lemma, letting $m\rightarrow\infty$ we get the following KAM counterterm theorem:
\begin{lem}\label{lem6-1}  There exists a d-dimensional vection valued function
\begin{equation}\label{fc6-1}
 \Lambda=\Lambda_{(00)}+\bar{\Lambda}
\end{equation}
with
\begin{equation}\label{fc6-2}
 \Lambda_{(00)}\in C^{\omega,\infty}(D(0,0,a,0)), \ \bar{\Lambda}\in C^{\omega,\infty}(D(0,0,a,b)),
\end{equation}
\begin{equation}\label{fc6-3}
 \Lambda_{(00)}=\sum_{1\leq|\alpha|\leq N}d_{\alpha}\xi^{\alpha},\ \bar{\Lambda}=O^{N+1}(\xi), \  \|\Lambda_{(00)}\|_{D(0,0,a,0),\ell}\leq Ca,\ \|\bar{\Lambda}\|_{D(0,0,a,b),\ell}\leq \frac{C\varepsilon_0}{\gamma^{\ell+1}}
\end{equation}
and there is a diffeomorphism $\Phi=\Psi^{-1}$  with
$$\Psi:D(\frac{s_{0}}{2},\frac{r_{0}}{2},a,b)\rightarrow D(s_{0},r_{0},a,b)$$
of the form
$$\Phi:\theta=\tilde{x}+u(\tilde{x},\xi,\eta), \ \rho=\tilde{y}+v(\tilde{x},\xi,\eta)+w(\tilde{x},\xi,\eta) \tilde{y},$$
where $u, v, w\in C^{\omega,\infty}(D(\frac{s_{0}}{2},0,a,b))$ and
$$\|u\|_{D(\frac{s_{0}}{2},0,a,b),\ell},\ \|v\|_{D(\frac{s_{0}}{2},0,a,b),\ell},\ \|w\|_{D(\frac{s_{0}}{2},0,a,b),\ell}\leq \frac{C\varepsilon_0}{\gamma^{2\ell+2}}$$
such that system {\rm(\ref{fc4-1})} is changed by $\Phi$ into
\begin{equation}\label{fc6-4}
\begin{cases}
\dot{\theta}=\eta+h_{(\infty)}\cdot(\omega_0-\eta+\Lambda(\xi,\eta))+\sum_{|\alpha|\geq1}f_{\alpha}^{(\infty)}(\theta,\xi,\eta)\rho^{\alpha}
+R^{(1)}_{(\infty)},\\
\dot{\rho}=H_{(\infty)}\cdot(\omega_0-\eta+\Lambda(\xi,\eta))
 +\sum_{|\beta|\geq2}g_{\beta}^{(\infty)}(\theta,\xi,\eta)\rho^{\beta}+R^{(2)}_{(\infty)},
\end{cases}
\end{equation}
where
\begin{equation}\label{fc6-5}
h_{(\infty)}\in C^{\omega,\infty}(D(\frac{s_0}{2},0,a,b)),\ \|h_{(\infty)}\|_{D(\frac{s_0}{2},0,a,b),\ell}\leq C,
\end{equation}
 \begin{equation}\label{fb6-1}
h_{(\infty)}(-\theta,\xi,\eta)=h_{(\infty)}(\theta,\xi,\eta),
\end{equation}
\begin{equation}\label{fc6-6}
H_{(\infty)}\in C^{\omega,\infty}(D(\frac{s_0}{2},\frac{r_0}{2},a,b)),\ \|H_{(\infty)}\|_{D(\frac{s_0}{2},\frac{r_0}{2},a,b),\ell}\leq \frac{C\varepsilon_0}{\gamma^{2\ell+2}},
\end{equation}
 \begin{equation}\label{fb6-2}
H_{(\infty)}(-\theta,\rho,\xi,\eta)=-H_{(\infty)}(\theta,\rho,\xi,\eta),
\end{equation}
\begin{equation}\label{fc6-7}
f_{\alpha}^{(\infty)}\in C^{\omega,\infty}(D(\frac{s_0}{2},0,a,b)),\ \forall|\alpha|\geq1,\ \|\sum_{|\alpha|\geq1}f_{\alpha}^{(\infty)}(\theta,\xi,\eta)\rho^{\alpha}\|_{D(\frac{s_0}{2},\frac{r_0}{2},a,b,),\ell}\leq Cr_0,
\end{equation}
 \begin{equation}\label{fb6-3}
f_{\alpha}^{(\infty)}(-\theta,\xi,\eta)=f_{\alpha}^{(\infty)}(\theta,\xi,\eta), \ \forall|\alpha|\geq1,
\end{equation}
\begin{equation}\label{fc6-8}
g_{\beta}^{(\infty)}\in C^{\omega,\infty}(D(\frac{s_0}{2},0,a,b)),\ \forall|\beta|\geq2,\ \|\sum_{|\beta|\geq2}g_{\beta}^{(\infty)}(\theta,\xi,\eta)\rho^{\beta}\|_{D(\frac{s_0}{2},\frac{r_0}{2},a,b),\ell}\leq Cr_0^2,
\end{equation}
 \begin{equation}\label{fb6-4}
g_{\beta}^{(\infty)}(-\theta,\rho,\xi,\eta)=-g_{\beta}^{(\infty)}(\theta,\rho,\xi,\eta), \ \forall|\beta|\geq2,
\end{equation}
\begin{equation}\label{fc6-9}
R_{(\infty)}^{(1)}\in C^{\omega,\infty}(D(\frac{s_0}{2},0,a,b)) \ is \ flat \ on \ DC(\gamma,\tau),\ \|R_{(\infty)}^{(1)}\|_{D(\frac{s_0}{2},0,a,b),\ell}\leq \frac{C\varepsilon_0}{\gamma^{2\ell+2}},
\end{equation}
 \begin{equation}\label{fb6-5}
R_{(\infty)}^{(1)}(-\theta,\xi,\eta)=R_{(\infty)}^{(1)}(\theta,\xi,\eta),
\end{equation}
\begin{equation}\label{fc6-10}
R_{(\infty)}^{(2)}\in C^{\omega,\infty}(D(\frac{s_0}{2},\frac{r_0}{2},a,b)) \ is \ flat \ on \ DC(\gamma,\tau),\ \|R_{(\infty)}^{(2)}\|_{D(\frac{s_0}{2},\frac{r_0}{2},a,b),\ell}\leq \frac{C\varepsilon_0}{\gamma^{2\ell+2}},
\end{equation}
 \begin{equation}\label{fb6-6}
R_{(\infty)}^{(2)}(-\theta,\rho,\xi,\eta)=-R_{(\infty)}^{(2)}(\theta,\rho,\xi,\eta).
\end{equation}
\end{lem}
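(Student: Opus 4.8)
The plan is to run the iterative Lemma \ref{lem4-1} infinitely often and pass to the limit. First I would note that the initial data assembled in Section \ref{sec4} (together with (\ref{fb4-40})--(\ref{fb4-44})) verify all hypotheses (\ref{fc4-3})--(\ref{fb4-13}) at $m=0$, so Lemma \ref{lem4-1} applies repeatedly and produces a sequence of close-to-identity diffeomorphisms $\Phi_{m+1}=\Psi_{m+1}^{-1}$ with $\|u_{m+1}\|,\|v_{m+1}\|,\|w_{m+1}\|\le C\varepsilon_m$, together with the families $h_{(m)},H_{(m)},f_\alpha^{(m)},g_\beta^{(m)},R^{(1)}_{(m)},R^{(2)}_{(m)}$ and the counterterms $\Lambda_{(m)}=\Lambda_{(00)}+\bar\Lambda_{(m)}$. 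Since $e_m\uparrow\frac12$, the domains stabilise: $s_m\ge\frac{s_0}{2}$ and $r_m\ge\frac{r_0}{2}$ for all $m$, so the natural limit domain $D(\frac{s_0}{2},\frac{r_0}{2},a,b)$ is nondegenerate. Everything below rests on the super-exponential decay $\varepsilon_m=\varepsilon_0^{(4/3)^m}$, which makes $\sum_m\varepsilon_m<\infty$ for $a$ (hence $\varepsilon_0$) small.

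Next I would establish convergence of the coordinate change. Using the domain inclusions (\ref{fc5-33}) and the bounds $\|u_{m+1}\|,\|v_{m+1}\|,\|w_{m+1}\|\le C\varepsilon_m$, the partial compositions $\Psi^{(m)}:=\Psi_1\circ\cdots\circ\Psi_m$ form a Cauchy sequence in $C^{\omega,\infty}(D(\frac{s_0}{2},\frac{r_0}{2},a,b))$: each new factor moves a point by $O(\varepsilon_m)$, and by the chain rule together with Cauchy estimates the increment $\Psi^{(m+1)}-\Psi^{(m)}$ is bounded by $C\varepsilon_m$ in every $\ell$-norm. Summability then yields a limit $\Psi:D(\frac{s_0}{2},\frac{r_0}{2},a,b)\to D(s_0,r_0,a,b)$ whose inverse is the desired $\Phi=\Psi^{-1}$ of the stated form, with $\|u\|,\|v\|,\|w\|\le C\varepsilon_0$.

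Then I would take limits of all the transformed data and of the counterterm. The counterterm increment satisfies $\bar\Lambda_{(m+1)}-\bar\Lambda_{(m)}=\Gamma$ with $\|\Gamma\|\le C\varepsilon_{m+1}$ (see (\ref{fc5-91}), (\ref{fb5-90})), so $\bar\Lambda_{(m)}$ converges to some $\bar\Lambda$ obeying (\ref{fc6-1})--(\ref{fc6-3}); the low-order obstruction functions obey $\|f_0^{(m)}\|,\|g_0^{(m)}\|,\|g_1^{(m)}\|\le C\varepsilon_m\to0$, which is exactly why they are absent from the limit system (\ref{fc6-4}). For the remaining families I would use the telescoping structure: from (\ref{fc5-36})--(\ref{fc5-37}), $h_{(m+1)}-h_{(m)}$ equals the new piece $h_{(m+1,m+1)}$ (norm $\le C\varepsilon_m$) plus the modifications $h_{(j,m)}(\cdot+U_{m+1})-h_{(j,m)}(\cdot)$, each bounded by $C\varepsilon_{j-1}\varepsilon_m$ via Cauchy, whose sum over $j$ is again $O(\varepsilon_m)$; hence $h_{(m)}\to h_{(\infty)}$, and in the same way $H_{(m)}\to H_{(\infty)}$, $f_\alpha^{(m)}\to f_\alpha^{(\infty)}$, $g_\beta^{(m)}\to g_\beta^{(\infty)}$, $R^{(1)}_{(m)}\to R^{(1)}_{(\infty)}$, $R^{(2)}_{(m)}\to R^{(2)}_{(\infty)}$ in every $\ell$-norm. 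The limit estimates (\ref{fc6-5})--(\ref{fc6-10}) follow from the uniform-in-$m$ bounds.

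Finally, the closing structural properties pass to the limit by continuity: the reversibility (parity) relations (\ref{fc5-72})--(\ref{fc5-78}) give (\ref{fb6-1})--(\ref{fb6-6}) directly, and flatness of $R^{(1)}_{(\infty)},R^{(2)}_{(\infty)}$ on $DC(\gamma,\tau)$ is preserved because, by Lemma \ref{lem5-1}, each $R^{(i)}_{(j,m)}$ is flat and flatness (Definition \ref{defn3-1}, the vanishing of all derivatives whenever $\eta\in DC(\gamma,\tau)$) is a closed condition under the $C^\infty$ convergence just obtained. The main obstacle is the convergence argument of the second and third steps: one must ensure that the super-exponential rate $\varepsilon_m=\varepsilon_0^{(4/3)^m}$ really dominates the analytic-smoothing losses $\gamma^{-(\ell+1)}(s_m-s_{m+1})^{-[(\tau+1)\ell+\tau+d]}$ produced by the small-divisor solution of the homological equations (cf.\ (\ref{fc5-12})). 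Since $s_m-s_{m+1}$ shrinks only polynomially in $m$ while $\varepsilon_m^{1/3}$ decays super-exponentially, this holds for $a$ small, so that every per-step estimate is absorbed into $C\varepsilon_m$ (respectively $C\varepsilon_{m+1}$ for the quadratic terms, as in (\ref{fb5-30})), and the scheme converges on the fixed domain $D(\frac{s_0}{2},\frac{r_0}{2},a,b)$.
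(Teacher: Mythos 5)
Your proposal is correct and follows exactly the paper's approach: the paper's entire proof of this lemma is the single remark that one obtains it ``in the iterative lemma, letting $m\rightarrow\infty$,'' i.e.\ iterate Lemma \ref{lem4-1} from the initial data of Section \ref{sec4} and pass to the limit on the stabilized domain $D(\frac{s_0}{2},\frac{r_0}{2},a,b)$. Your write-up merely supplies the convergence details (summability of $\varepsilon_m$, telescoping of $h_{(m)},H_{(m)}$, vanishing of $f_0^{(m)},g_0^{(m)},g_1^{(m)}$ in the limit, closedness of the parity and flatness conditions) that the paper leaves implicit.
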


\section{Proof of  Theorem 1.1}\label{sec7}
In this section, we will prove Theorem 1.1. First, we give some lemmas.
\begin{lem}\label{lem7-1} Assume that $\omega_0\in DC(\gamma_0,\tau_0)$. There exist
\begin{equation}\label{fc7-1}
\Psi:x=\theta+u(\theta,\rho,\xi),\
y=\rho+\xi+v(\theta,\rho,\xi),
\end{equation}
where $u(-\theta,\rho,\xi)=-u(\theta,\rho,\xi)$, $v(-\theta,\rho,\xi)=v(\theta,\rho,\xi)$, $\hat{v}(0,\rho,\xi)=0$, $u=\sum_{|\alpha|+|\bar{\alpha}|\geq1}u_{\alpha,\bar{\alpha}}(\theta)\rho^{\alpha}\xi^{\bar{\alpha}}$,
$v=\sum_{|\beta|+|\bar{\beta}|\geq2}v_{\beta,\bar{\beta}}(\theta)\rho^{\beta}\xi^{\bar{\beta}}$, $u_{\alpha,\bar{\alpha}}, v_{\beta,\bar{\beta}}\in C^{\omega,\infty}(D(s,0,0,0))$ {\rm(}$|\alpha|+|\bar{\alpha}|\geq1$, $|\beta|+|\bar{\beta}|\geq2${\rm)} and $\Omega(\xi)=\Sigma_{|\alpha|\geq0}\tilde{d}_{\alpha}\xi^{\alpha}$ ($\tilde{d}_{\alpha}$ are constants) such that system {\rm(\ref{fc1-3})} is changed by $\Phi:=\Psi^{-1}$ into
 \begin{equation}\label{fc7-2}
\begin{cases}
\dot{\theta}=\Omega(\xi)+F(\theta,\rho,\xi)\rho,\\
\dot{\rho}=\langle G(\theta,\rho,\xi)\rho, \rho\rangle:=G(\theta,\rho,\xi)\rho^2,
\end{cases}
\end{equation}
where $F(\theta,\rho,\xi)=\Sigma_{|\alpha|+|\bar{\alpha}|\geq0}F_{\alpha,\bar{\alpha}}(\theta)\rho^{\alpha}\xi^{\bar{\alpha}}$, $G(\theta,\rho,\xi)=\Sigma_{|\beta|+|\bar{\beta}|\geq0}G_{\beta,\bar{\beta}}(\theta)\rho^{\beta}\xi^{\bar{\beta}}$, $F_{\alpha,\bar{\alpha}}, G_{\beta,\bar{\beta}}\in C^{\omega,\infty}(D(s,0,0,0))$, $F_{\alpha,\bar{\alpha}}(-\theta)=F_{\alpha,\bar{\alpha}}(\theta)$, $G_{\beta,\bar{\beta}}(-\theta)=-G_{\beta,\bar{\beta}}(\theta)$. Then, $\Omega(\xi)=\Sigma_{|\alpha|\geq0}\tilde{d}_{\alpha}\xi^{\alpha}$ is unique.
\end{lem}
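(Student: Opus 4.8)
The plan is to reduce the statement to the invariant torus $\{\rho=0\}$ and to recover the frequency $\Omega$ recursively. The existence of $\Psi$ and of the normal form (\ref{fc7-2}) will be obtained by the same formal order-by-order procedure as the BNF in Lemma \ref{lem2-3} (cf.\ Section~3.1 of \cite{a2}): one solves successively homological equations of the type (\ref{fc2-14})--(\ref{fc2-15}), reversibility guaranteeing at each order both the solvability (average) condition and the persistence of the symmetries $u(-\theta,\rho,\xi)=-u(\theta,\rho,\xi)$, $v(-\theta,\rho,\xi)=v(\theta,\rho,\xi)$. I will therefore concentrate on the genuinely new point, the uniqueness of $\Omega(\xi)=\sum_{|j|\geq0}\tilde d_{j}\xi^{j}$.

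First I would restrict the conjugacy to $\{\rho=0\}$. Set $u_{0}(\theta,\xi):=u(\theta,0,\xi)$, $v_{0}(\theta,\xi):=v(\theta,0,\xi)$. Since $\dot\rho=G\rho^{2}$ vanishes on $\{\rho=0\}$, there $\dot\rho=0$ and $\dot\theta=\Omega(\xi)$, and the $\partial_{\rho}u$, $\partial_{\rho}v$ terms drop out of the chain rule; substituting $x=\theta+u_{0}$, $y=\xi+v_{0}$ into (\ref{fc1-3}) yields the two torus equations
\begin{equation}\label{proptor1}
\omega_{0}+f(\theta+u_{0},\xi+v_{0})=(\mathbf{1}_{d\times d}+\partial_{\theta}u_{0})\,\Omega(\xi),
\end{equation}
\begin{equation}\label{proptor2}
g(\theta+u_{0},\xi+v_{0})=\partial_{\theta}v_{0}\cdot\Omega(\xi).
\end{equation}
Only $u_{0},v_{0},\Omega$ enter, so $F$, $G$ and all $\rho$-dependence of $u,v$ are irrelevant to $\Omega$. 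From $|\alpha|+|\bar\alpha|\geq1$ and $|\beta|+|\bar\beta|\geq2$ one has $u_{0}=O(\xi)$, $v_{0}=O(\xi^{2})$, whence $\xi=0$ recovers $\Gamma_{0}$ and $\tilde d_{0}=\Omega(0)=\omega_{0}$.

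The uniqueness is an induction on the total degree $n$ in $\xi$, assuming the homogeneous parts $u_{0}^{[i]},v_{0}^{[i]}$ ($i<n$) and $\tilde d_{j}$ ($|j|<n$) are already fixed. Because $f=O(y)$ and $g=O(y^{2})$, in the degree-$n$ component of (\ref{proptor2}) neither $u_{0}^{[n]}$ nor $v_{0}^{[n]}$ occurs inside $g$; it reduces to a homological equation $\omega_{0}\cdot\partial_{\theta}v_{0}^{[n]}=\mathcal{R}_{n}$ with $\mathcal{R}_{n}$ depending only on lower-order data. Reversibility makes $\mathcal{R}_{n}$ odd in $\theta$, so $\widehat{\mathcal{R}}_{n}(0)=0$, and since $\omega_{0}$ is Diophantine $v_{0}^{[n]}$ is determined uniquely under the normalization $\widehat{v}_{0}(0,\xi)=0$. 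Inserting this, the degree-$n$ component of (\ref{proptor1}) reads
\begin{equation}\label{prophomu}
\textstyle\sum_{|j|=n}\tilde d_{j}\xi^{j}+\omega_{0}\cdot\partial_{\theta}u_{0}^{[n]}=\mathcal{S}_{n},
\end{equation}
where $\mathcal{S}_{n}$ depends only on strictly lower order data and on the now-known $v_{0}^{[n]}$ (which enters linearly through the $|\alpha|=1$ coefficient of $f$). Taking the $\theta$-average of (\ref{prophomu}) kills the total-derivative term and forces $\sum_{|j|=n}\tilde d_{j}\xi^{j}=\widehat{\mathcal{S}}_{n}(0)$; hence every $\tilde d_{j}$ with $|j|=n$ is uniquely determined, after which the oscillating part of (\ref{prophomu}) fixes $u_{0}^{[n]}$ (unique because $u_{0}$ is odd in $\theta$, hence of zero average). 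This closes the induction and proves uniqueness; one also recognizes $\tilde d_{j}=d_{j}$, i.e.\ $\Omega(\xi)=f_{F}(\xi)$ with $f_{F}$ as in Lemma \ref{lem2-3}.

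The main obstacle is the bookkeeping behind (\ref{prophomu}): one must check, using $f=O(y)$ and $g=O(y^{2})$ in an essential way, that the top-order unknowns $u_{0}^{[n]},v_{0}^{[n]}$ never contaminate $\mathcal{R}_{n},\mathcal{S}_{n}$ at their own order, so that the recursion decouples, and that the reversing symmetry is inherited order by order, so that $\mathcal{R}_{n}$ is odd (giving solvability) and the averages defining $\tilde d_{j}$ are well posed. Granting this, uniqueness of each $\tilde d_{j}$ is immediate from the $\theta$-average of the $u_{0}$-homological equation.
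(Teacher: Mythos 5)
Your proof is correct, and its engine is the same as the paper's: an order-by-order induction in which one first solves a homological equation for $v$, then averages the $u$-equation over $\theta$ to isolate $\Omega$, with reversibility (oddness of $g$, oddness of $u$, evenness of $v$) supplying both the solvability condition and the normalizations. The organization, however, is genuinely different, and yours is leaner. The paper keeps the full conjugacy identity in $(\theta,\rho,\xi)$ --- equations (\ref{fc7-4})--(\ref{fc7-5}) --- and grades by total degree in $(\rho,\xi)$; it must therefore carry the unknowns $F_{[j]}$, $G_{[j]}$ through the induction, its $u_{[j]}$, $v_{[j]}$ are unique only \emph{modulo} $O^1(\rho)$ (the ambiguity being absorbable into $F$, $G$), and $\Omega_{[j]}$ is extracted by setting $\rho=0$ inside the mean values, as in (\ref{fc7-16}). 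You instead restrict the conjugacy to the invariant torus $\left\{\rho=0 \right\}$ at the outset, which removes $F$, $G$ and all higher $\rho$-jets of $u$, $v$ from the problem; the induction then runs over the $\xi$-degree alone and the restricted data $u_0$, $v_0$, $\Omega$ are strictly unique under the stated normalizations. Since the lemma asserts uniqueness only of $\Omega$, this reduction is a real simplification: it eliminates the ``modulo $O^1(\rho)$'' bookkeeping, and your key verification --- that $u_0^{[n]}$ and $v_0^{[n]}$ cannot enter the degree-$n$ right-hand sides, except $v_0^{[n]}$ entering linearly through the $|\alpha|=1$ coefficient of $f$, which is harmless because $v_0^{[n]}$ is determined first --- is exactly where $f=O(y)$ and $g=O(y^2)$ are needed, and it checks out. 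What the paper's heavier route buys is the existence of the full normal form (\ref{fc7-2}), including $F$ and $G$, within the same induction; you defer existence to ``the same procedure as Lemma \ref{lem2-3}'', which is legitimate but would be sharper if stated as the paper does right after its proof: compose the transformation of Lemma \ref{lem2-3} with the substitution $\mu=\rho+\xi$ (cf. (\ref{fb7-1})), which yields (\ref{fc7-1})--(\ref{fc7-2}) with $\Omega=f_F$, $G\equiv0$, $F$ independent of $\theta$, and the symmetries of Remark \ref{rem2-4} --- after which your uniqueness argument also gives the identification $\Omega=f_F$ that the paper needs later.
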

\begin{proof}
By  (\ref{fc7-1}), we have that
\begin{equation}\label{fc7-3}
\begin{cases}
\dot{x}=\dot{\theta}+\partial_{\theta}u\cdot\dot{\theta}+\partial_{\rho}u\cdot\dot{\rho},\\
\dot{y}=\dot{\rho}+\partial_{\theta}v\cdot\dot{\theta}+\partial_{\rho}v\cdot\dot{\rho}.
\end{cases}
\end{equation}
By (\ref{fc1-3}), (\ref{fc7-1})-(\ref{fc7-3}), we have that
\begin{eqnarray}\label{fc7-4}
\nonumber &&\Omega(\xi)+F(\theta,\rho,\xi)\rho+\partial_{\theta}u\cdot(\Omega(\xi)+F(\theta,\rho,\xi)\rho)+\partial_{\rho}u\cdot(G(\theta,\rho,\xi)\rho^2)\\
&=&\omega_0+f(\theta+u(\theta,\rho,\xi),\rho+\xi+v(\theta,\rho,\xi)),
\end{eqnarray}
\begin{eqnarray}\label{fc7-5}
\nonumber &&G(\theta,\rho,\xi)\rho^2+\partial_{\theta}v\cdot(\Omega(\xi)+F(\theta,\rho,\xi)\rho)+\partial_{\rho}v\cdot(G(\theta,\rho,\xi)\rho^2)\\ &=&g(\theta+u(\theta,\rho,\xi),\rho+\xi+v(\theta,\rho,\xi)).
\end{eqnarray}
Let
$$F_{[j]}(\theta,\rho,\xi)=[F(\theta,\rho,\xi)]_j$$
be the homogeneous component of degree $j$ of $F(\theta,\rho,\xi)$. Similarly, we can define $G(\theta,\rho,\xi)$, $u(\theta,\rho,\xi)$ and so on.

Let
$$\mathcal{M}(F(\theta,\rho,\xi))=\hat{F}(0,\rho,\xi)$$
be the mean value of $F(\theta,\rho,\xi)$, Similarly, we can define $G(\theta,\rho,\xi)$, $u(\theta,\rho,\xi)$ and so on.

For $j=0$, the equation (\ref{fc7-4}) becomes
\begin{equation}\label{fc7-6}
\Omega_{[0]}(\xi)=\omega_0.
\end{equation}

For $j=1$, the equation (\ref{fc7-4}) becomes
\begin{equation}\label{fc7-7}
\Omega_{[1]}(\xi)+F_{[0]}(\theta,\rho,\xi)\rho+\omega_0\cdot\partial_{\theta}u_{[1]}=f_{[1]}(\theta,\rho+\xi),
\end{equation}
where $\omega_0=(\omega_{01}, \omega_{02},\cdots,\omega_{0d})^{T}$, $\omega_0\cdot \partial_{\theta}=\sum_{j=1}^d\omega_{0j}\partial_{\theta_j}$.
Then we must have
\begin{equation}\label{fc7-8}
\Omega_{[1]}(\xi)=\mathcal{M}(f_{[1]}(\theta,\xi)),
\end{equation}
which  determines $\Omega_{[1]}(\xi)$ uniquely.
Then we get the equation for $u_{[1]}$:
\begin{equation}\label{fc7-9}
\omega_0\cdot\partial_{\theta}u_{[1]}=f_{[1]}(\theta,\xi+\rho)-\mathcal{M}(f_{[1]}(\theta,\xi))-F_{[0]}(\theta,\rho,\xi)\rho.
\end{equation}
 We have $\hat{u}(0,\rho,\xi)=0$ by $u(-\theta,\rho,\xi)=-u(\theta,\rho,\xi)$, then $u_{[1]}$ is unique modulo $O^1(\rho)$. (But we can add any term of degree one in $O^1(\rho)$ to $u_{[1]}$ by changing the definition of $F_{[0]}$.)

 For $j=2$, the equations (\ref{fc7-4}) and (\ref{fc7-5}) become
\begin{eqnarray}\label{fc7-10}
\nonumber &&
\Omega_{[2]}(\xi)+F_{[1]}\cdot\rho+\omega_0\cdot\partial_{\theta}u_{[2]}+\partial_{\theta}u_{[1]}\cdot\Omega_{[1]}(\xi)+\partial_{\theta}u_{[1]}\cdot (F_{[0]}\cdot\rho)+\partial_{\rho}u_{[1]}\cdot[G\rho^2]_0\\
&&=[(f_{[1]}+f_{[2]})(\theta+u_{[1]},\rho+\xi+v_{[2]})]_2,
\end{eqnarray}
\begin{equation}\label{fc7-11}
[G(\theta,\rho,\xi)\rho^2]_2+\omega_0\cdot\partial_{\theta}v_{[2]}=g_{[2]}(\theta,\rho+\xi).
\end{equation}

Noting $\hat{v}(0,\rho,\xi)=0$, the solution $v_{[2]}$ of (\ref{fc7-11}) is unique modulo $O^1(\rho)$.

By (\ref{fc7-10}), we must have
\begin{equation}\label{fc7-12}
\Omega_{[2]}(\xi)+\mathcal{M}(\partial_{\theta}u_{[1]}(\theta,0,\xi)\cdot\Omega_{[1]}(\xi))=\mathcal{M}([(f_{[1]}+f_{[2]})(\theta+u_{[1]}(\theta,0,\xi),\xi+v_{[2]}(\theta,0,\xi))]_2),
\end{equation}
which  determines $\Omega_{[2]}(\xi)$ uniquely. Then we get the equation for $u_{[2]}$:
\begin{eqnarray}\label{fc7-13}
\nonumber &&
F_{[1]}\cdot\rho+\omega_0\cdot\partial_{\theta}u_{[2]}+\partial_{\theta}u_{[1]}\cdot\Omega_{[1]}(\xi)-\mathcal{M}(\partial_{\theta}u_{[1]}(\theta,0,\xi)\cdot\Omega_{[1]}(\xi))+\partial_{\theta}u_{[1]}\cdot (F_{[0]}\cdot\rho)\\
\nonumber &=&-\partial_{\rho}u_{[1]}\cdot[G\rho^2]_2+[(f_{[1]}+f_{[2]})(\theta+u_{[1]},\rho+\xi+v_{[2]})]_2\\
&&-\mathcal{M}([(f_{[1]}+f_{[2]})(\theta+u_{[1]}(\theta,0,\xi),\xi+v_{[2]}(\theta,0,\xi))]_2).
\end{eqnarray}
Clearly, $u_{[2]}$ is unique modulo $O^1(\rho)$.

We now proceed by induction on $j\geq3$: assume that for $j\leq m$, $\Omega_{[j]}(\xi)$ is unique, $u_{[j]}$ and $v_{[j]}$ are unique modulo $O^1(\rho)$. We have seen that this induction assumption is true for $m=2$.

 For $j=m+1$, the equations (\ref{fc7-4}) and (\ref{fc7-5}) become
\begin{equation}\label{fc7-14}
\Omega_{[m+1]}(\xi)+F_{[m]}\cdot\rho+\omega_0\cdot\partial_{\theta}u_{[m+1]}+P_{[m+1]}(\theta,\rho,\xi)+Q_{[m+1]}(\theta,\rho,\xi)
=R_{[m+1]}(\theta,\rho,\xi),
\end{equation}
where $$P_{[m+1]}=\partial_{\theta}u_{[1]}\cdot\Omega_{[m]}(\xi)+\cdots+\partial_{\theta}u_{[m]}\cdot\Omega_{[1]}(\xi),$$
$$Q_{[m+1]}=\partial_{\theta}u_{[1]}\cdot (F_{[m-1]}\rho)+\cdots+\partial_{\theta}u_{[m]}\cdot (F_{[0]}\rho)+\partial_{\rho}u_{[1]}\cdot[G\rho^2]_{m+1}+\cdots+\partial_{\rho}u_{[m]}\cdot[G\rho^2]_2,$$
$$R_{[m+1]}=[(f_{[1]}+\cdots+f_{[m+1]})(\theta+u_{[1]}+\cdots+u_{[m]},\rho+\xi+v_{[2]}+\cdots+v_{[m+1]})]_{m+1},$$
\begin{equation}\label{fc7-15}
[G(\theta,\rho,\xi)\rho^2]_{m+1}+\omega_0\cdot\partial_{\theta}v_{[m+1]}+S_{[m+1]}(\theta,\rho,\xi)+\tilde{S}_{[m+1]}(\theta,\rho,\xi)
=T_{[m+1]}(\theta,\rho,\xi),
\end{equation}
where
$$S_{[m+1]}=\partial_{\theta}v_{[2]}\cdot\Omega_{[m-1]}(\xi)+\cdots+\partial_{\theta}v_{[m]}\cdot\Omega_{[1]}(\xi),$$
$$\tilde{S}_{[m+1]}=\partial_{\theta}v_{[2]}\cdot (F_{[m-2]}\rho)+\cdots+\partial_{\theta}v_{[m]}\cdot (F_{[0]}\rho)+\partial_{\rho}v_{[2]}\cdot[G\rho^2]_m+\cdots+\partial_{\rho}v_{[m]}\cdot[G\rho^2]_2,$$
$$T_{[m+1]}=[(g_{[2]}+\cdots+g_{[m+1]})(\theta+u_{[1]}+\cdots+u_{[m]},\rho+\xi+v_{[2]}+\cdots+v_{[m]})]_{m+1}.$$
Noting $\hat{v}(0,\rho,\xi)=0$, the solution $v_{[m+1]}$ of (\ref{fc7-15}) is unique modulo $O^1(\rho)$. By (\ref{fc7-14}), we must have
\begin{equation}\label{fc7-16}
\Omega_{[m+1]}(\xi)
=\mathcal{M}(R_{[m+1]}(\theta,0,\xi)-P_{[m+1]}(\theta,0,\xi)),
\end{equation}
which  determines $\Omega_{[m+1]}(\xi)$ uniquely. Then we get the equation for $u_{[m+1]}$:
\begin{eqnarray}\label{fc7-17}
\nonumber &&
\mathcal{M}(R_{[m+1]}(\theta,0,\xi)-P_{[m+1]}(\theta,0,\xi))+F_{[m]}\cdot\rho+\omega_0\cdot\partial_{\theta}u_{[m+1]}\\
 &=&R_{[m+1]}(\theta,\rho,\xi)-P_{[m+1]}(\theta,\rho,\xi)-Q_{[m+1]}(\theta,\rho,\xi).
\end{eqnarray}
Clearly, $u_{[m+1]}$ is unique modulo $O^1(\rho)$.

This shows the existence of $\Omega$ verifying (\ref{fc7-2}) up to any order $j$, as well
as the uniqueness.
\end{proof}

Noting Lemma \ref{lem2-3}, let $\theta=\phi$, $\rho=\mu-\xi$. By (\ref{fb2-100}), we have that
 \begin{equation}\label{fb7-1}
\begin{cases}
\dot{\theta}=f_F(\xi)+\tilde{F}(\rho,\xi)\rho,\\
\dot{\rho}=0,
\end{cases}
\end{equation}
where $\tilde{F}(\rho,\xi)=\Sigma_{|\alpha|+|\bar{\alpha}|\geq0}\tilde{F}_{\alpha,\bar{\alpha}}\rho^{\alpha}\xi^{\bar{\alpha}}$, $\tilde{F}_{\alpha,\bar{\alpha}}$ ($|\alpha|+|\bar{\alpha}|\geq0$) are constants. It follows that, for any $j\geq1$,
\begin{equation}\label{fb7-2}
\Omega(\xi)=f_F(\xi)+O^j(\xi).
\end{equation}
\begin{lem}\label{lem7-2}  There exists
a unique $C^{\infty}$ function $\eta(\xi)$: $\mathbf{R}^d\rightarrow \mathbf{R}^d$, defined in the neighborhood of 0 given by $|\xi|<a$ such that
\begin{equation}\label{fc7-18}
\omega_0-\eta(\xi)+\Lambda(\xi,\eta(\xi))=0,
\end{equation}
where $\eta(\xi)=\omega_0+\sum_{1\leq|\alpha|\leq N}d_{\alpha}\xi^{\alpha}+G(\xi)$, $G(\xi)=O^{N+1}(\xi)$, $\|G(\xi)\|_{D(0,0,a,b),\ell}\leq \frac{C\varepsilon_0}{\gamma^{\ell+1}}$.
Moreover, if $\eta(\xi)\in DC(\gamma,\tau)$, the Taylor series of $\eta(\xi)$ at $\xi=0$ is given by $f_F(\xi)$.
\end{lem}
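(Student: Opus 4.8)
The plan is to solve (\ref{fc7-18}) by the implicit function theorem and then read off the structure and the Taylor expansion of $\eta(\xi)$. Set $F(\xi,\eta):=\omega_0-\eta+\Lambda(\xi,\eta)$, where by Lemma \ref{lem6-1} and (\ref{fc6-1})--(\ref{fc6-3}) one has $\Lambda(\xi,\eta)=\sum_{1\leq|\alpha|\leq N}d_{\alpha}\xi^{\alpha}+\bar{\Lambda}(\xi,\eta)$ with $\bar{\Lambda}\in C^{\omega,\infty}(D(0,0,a,b))$, $\bar{\Lambda}=O^{N+1}(\xi)$ and $\|\bar{\Lambda}\|_{D(0,0,a,b),\ell}\leq C\varepsilon_0=Ca^{N+1}$. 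At $\xi=0$ both $\Lambda_{(00)}$ and $\bar{\Lambda}$ vanish identically in $\eta$ (the latter because $\bar\Lambda=O^{N+1}(\xi)$ forces $\bar{\Lambda}(0,\eta)\equiv0$), so $F(0,\eta)=\omega_0-\eta$ vanishes exactly at $\eta=\omega_0$. Differentiating the identity $\bar{\Lambda}(0,\eta)\equiv0$ in $\eta$ gives $\partial_{\eta}\bar{\Lambda}(0,\omega_0)=0$, hence $\partial_{\eta}F(0,\omega_0)=-\mathbf{1}_{d\times d}$, which is invertible. Since $\bar{\Lambda}$ is $C^{\infty}$ in $(\xi,\eta)$, the implicit function theorem produces a unique $C^{\infty}$ map $\xi\mapsto\eta(\xi)$ on $|\xi|<a$ (shrinking $a$ if necessary) with $\eta(0)=\omega_0$ and $F(\xi,\eta(\xi))\equiv0$, which is precisely (\ref{fc7-18}).

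Next I would extract the claimed form. Solving (\ref{fc7-18}) for $\eta$ gives $\eta(\xi)=\omega_0+\sum_{1\leq|\alpha|\leq N}d_{\alpha}\xi^{\alpha}+G(\xi)$ with $G(\xi):=\bar{\Lambda}(\xi,\eta(\xi))$. Since $G$ depends on $\xi$ alone and $\eta(\xi)$ remains in the parameter ball $B(b)$, its norm reduces to $\|G\|_{D(0,0,a,b),\ell}=\sup_{|\xi|<a}|\bar{\Lambda}(\xi,\eta(\xi))|\leq\|\bar{\Lambda}\|_{D(0,0,a,b),0}\leq Ca^{N+1}$. That $G=O^{N+1}(\xi)$ follows by repeated use of the chain rule (Fa\`a di Bruno's formula): every term in $\partial_{\xi}^{\alpha}G(0)$ with $|\alpha|\leq N$ carries a factor $\partial_{\xi}^{\beta}\partial_{\eta}^{q}\bar{\Lambda}(0,\omega_0)$ with $|\beta|\leq|\alpha|\leq N$, and differentiating $\bar{\Lambda}(0,\eta)\equiv0$ in $\eta$ shows all such factors vanish.

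The main obstacle is the ``moreover'' clause: identifying the full Taylor series of $\eta(\xi)$ at $\xi=0$ with $f_F(\xi)$ when $\eta(\xi)\in DC(\gamma,\tau)$. The key is that substituting $\eta=\eta(\xi)$ into (\ref{fc6-4}) annihilates the counterterm factor $\omega_0-\eta+\Lambda$, leaving $\dot{\theta}=\eta(\xi)+\sum_{|\alpha|\geq1}f_{\alpha}^{(\infty)}\rho^{\alpha}+R^{(1)}_{(\infty)}$ and $\dot{\rho}=\sum_{|\beta|\geq2}g_{\beta}^{(\infty)}\rho^{\beta}+R^{(2)}_{(\infty)}$. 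Because $R^{(1)}_{(\infty)},R^{(2)}_{(\infty)}$ are flat on $DC(\gamma,\tau)$ by (\ref{fc6-9}), (\ref{fc6-10}) and Definition \ref{defn3-1}, and $\eta(0)=\omega_0\in DC(\gamma,\tau)$, a chain-rule computation shows that $R^{(i)}_{(\infty)}(\,\cdot\,,\eta(\xi))$ and all its $\xi$-derivatives vanish at $\xi=0$; hence the flat remainders do not contribute to the Taylor expansion in $\xi$ at the origin. Consequently, composing with the preliminary normalization of Lemma \ref{lem2-3}, the whole change of variables conjugates (\ref{fc1-3}), as a formal series in $(\rho,\xi)$, to a system of exactly the normalized shape (\ref{fc7-2}) of Lemma \ref{lem7-1} with frequency $\eta(\xi)$.

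I would then invoke the uniqueness of $\Omega(\xi)$ in Lemma \ref{lem7-1}: since both the Taylor series of $\eta(\xi)$ and that of $\Omega(\xi)$ arise as the frequency of a formal normalization of (\ref{fc1-3}) into the form (\ref{fc7-2}), they must coincide, and by (\ref{fb7-2}) the series of $\Omega$ equals $f_F$ to every order. This final step is the delicate one, because it requires checking that the formal normalization generated by the counterterm scheme is genuinely governed by the same uniqueness as the formal BNF of Lemma \ref{lem7-1}; the flatness of the remainders on $DC(\gamma,\tau)$ together with $\omega_0\in DC(\gamma,\tau)$ is exactly what lets the two formal frequency series be matched order by order.
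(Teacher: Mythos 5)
Your proposal is correct and follows essentially the same route as the paper's proof: the implicit function theorem applied to $\omega_0-\eta+\Lambda(\xi,\eta)=0$ via (\ref{fc6-1})--(\ref{fc6-3}), then substitution of $\eta=\eta(\xi)$ into (\ref{fc6-4}), where the flatness of $R^{(1)}_{(\infty)},R^{(2)}_{(\infty)}$ on $DC(\gamma,\tau)$ together with $\eta(0)=\omega_0\in DC(\gamma,\tau)$ reduces the system (at least to all orders at $\xi=0$) to the normalized shape (\ref{fc7-2}), and finally the uniqueness of $\Omega$ in Lemma \ref{lem7-1} combined with (\ref{fb7-1})--(\ref{fb7-2}) identifies the Taylor series of $\eta$ with $f_F$ --- exactly the chain of citations (Lemma \ref{lem2-3}, Remark \ref{rem2-4}, Lemma \ref{lem7-1}, (\ref{fb7-1})--(\ref{fc7-19})) the paper invokes, with your write-up supplying more detail on the non-degeneracy of $\partial_\eta F$ and the chain-rule/flatness step. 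The one cosmetic slip is that the preliminary normalization you compose with is that of Lemma \ref{lem2-1} leading to (\ref{fc2-2}) (followed by the shift $\mu=\xi+y$), not Lemma \ref{lem2-3}, whose role is instead to furnish the formal BNF side (\ref{fb7-1})--(\ref{fb7-2}) of the comparison.
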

\begin{proof}By (\ref{fc6-1})-(\ref{fc6-3}) and the implicit function theorem, we can get (\ref{fc7-18}). If $\eta(\xi)\in DC(\gamma,\tau)$,  we can rewrite (\ref{fc6-4}) as
\begin{equation}\label{fc7-19}
\begin{cases}
\dot{\theta}=\eta(\xi)+\sum_{|\alpha|\geq1}f_{\alpha}^{(\infty)}(\theta,\xi,\eta(\xi))\rho^{\alpha},\\
\dot{\rho}=\sum_{|\beta|\geq2}g_{\beta}^{(\infty)}(\theta,\xi,\eta(\xi))\rho^{\beta}.
\end{cases}
\end{equation}
By Lemma \ref{lem2-3}, Remark \ref{rem2-4}, Lemma \ref{lem7-1}, (\ref{fb7-1})-(\ref{fc7-19}), the Taylor series of $\eta(\xi)$ at $\xi=0$ is given by $f_F(\xi)$.
\end{proof}

By Lemma \ref{lem7-2}, if $\eta(\xi)\in DC(\gamma,\tau)$, $\left\{\eta(\xi)t:t\in\mathbf{R} \right\}\times\left\{\rho=0 \right\}$ is an analytic invariant torus of (\ref{fc6-4}).

\begin{lem}\label{lem7-3}{\rm (Lemma 5.1 in \cite{a2})}  If {\rm(\ref{fb2-100})} is 0-degenerate, then there exist $p,\sigma>0$ such that for any  $k\in\mathbf{Z}^{d}/ \left\{0 \right\}$
there exists a unit vector $u_k\in\mathbf{R}^{d}$ such that the series
$$f_k(\mu)=\langle\frac{k}{|k|},f_F(\mu)\rangle$$
is $(p,\sigma)$-transverse in direction $u_k$; that is,
$$\max_{0\leq j\leq p}|\partial_t^jf_k(tu_k)_{|t=0}|\geq\sigma.$$
\end{lem}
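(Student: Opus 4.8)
The plan is to reduce the statement to a finite-dimensional compactness argument, mirroring the structure of Lemma 5.1 in \cite{a2}. First I would unpack the hypothesis. Writing $f_F(\mu)=\omega_0+\sum_{|\alpha|\geq1}d_\alpha\mu^\alpha$ and setting $d_0:=\omega_0$, for a unit vector $v\in\mathbf{R}^d$ the scalar series $f_v(\mu):=\langle v,f_F(\mu)\rangle$ has homogeneous components $[f_v]_j(w)=\sum_{|\alpha|=j}\langle v,d_\alpha\rangle w^\alpha$, so that $\partial_t^jf_v(tw)_{|t=0}=j!\,[f_v]_j(w)$ for every $w\in\mathbf{R}^d$. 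In particular, $(p,\sigma)$-transversality of $f_k=f_{k/|k|}$ involves only finitely many Taylor coefficients of $f_F$, so the (possibly divergent) nature of the BNF series plays no role here. The condition that \eqref{fb2-100} be $0$-degenerate says precisely that no unit vector $\gamma$ satisfies $\langle f_F(\mu),\gamma\rangle\equiv0$; equivalently, the orthogonal complement of $W:=\mathrm{span}\{d_\alpha:|\alpha|\geq0\}$ is trivial, i.e. $W=\mathbf{R}^d$.

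The first quantitative step is to fix the order $p$. Since the nested subspaces $W_p:=\mathrm{span}\{d_\alpha:|\alpha|\leq p\}$ increase with $p$ and $\bigcup_pW_p=W=\mathbf{R}^d$, finite-dimensionality forces $W_{p_0}=\mathbf{R}^d$ for some finite $p_0$; I set $p:=\max\{p_0,1\}$ (enlarging the order can only increase the left-hand side of the transversality inequality, and guarantees $p>0$). Then for every unit vector $v$ there must be some $\alpha$ with $|\alpha|\leq p$ and $\langle v,d_\alpha\rangle\neq0$, for otherwise $v\perp W_p=\mathbf{R}^d$. Consequently, for each $v\in S^{d-1}:=\{v\in\mathbf{R}^d:|v|=1\}$, at least one of the polynomials $[f_v]_0,\dots,[f_v]_p$ is not identically zero in $w$, so that $\max_{0\leq j\leq p}\max_{w\in S^{d-1}}|[f_v]_j(w)|>0$.

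The second step promotes this pointwise non-vanishing to a uniform lower bound by compactness. I would introduce
\[
\Phi(v):=\max_{0\leq j\leq p}\ \max_{w\in S^{d-1}}\ \bigl|\partial_t^jf_v(tw)_{|t=0}\bigr|=\max_{0\leq j\leq p}\ \max_{w\in S^{d-1}}\ j!\,\bigl|[f_v]_j(w)\bigr|.
\]
Since $[f_v]_j(w)$ depends linearly (hence continuously) on $v$ and the inner maximum is taken over the compact sphere, $\Phi$ is continuous on $S^{d-1}$; by the previous step $\Phi(v)>0$ everywhere. As $S^{d-1}$ is compact, $\Phi$ attains a positive minimum $\sigma:=\min_{v\in S^{d-1}}\Phi(v)>0$. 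Finally, for any $k\in\mathbf{Z}^d/\{0\}$ I apply this with $v=k/|k|$: choosing $u_k$ to be a direction $w^\ast\in S^{d-1}$ realizing the double maximum defining $\Phi(k/|k|)$ gives $\max_{0\leq j\leq p}|\partial_t^jf_k(tu_k)_{|t=0}|\geq\Phi(k/|k|)\geq\sigma$, which is exactly the desired $(p,\sigma)$-transversality.

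The pointwise non-vanishing is immediate from $0$-degeneracy, so the main (and essentially only nontrivial) point is the \emph{uniformity} in $k$: the stabilization of the chain $W_p$ yields a single finite $p$ valid for all $k$, while the continuity of $\Phi$ together with compactness of the unit sphere yields a single $\sigma>0$ valid for all $k$. I expect no genuine obstacle beyond the care needed to verify that $\Phi$ is continuous and that the finitely many coefficients $d_\alpha$ ($|\alpha|\leq p$) indeed span $\mathbf{R}^d$ under the $0$-degeneracy hypothesis.
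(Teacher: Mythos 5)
Your proposal is correct, and complete modulo routine facts (a nonzero homogeneous polynomial cannot vanish identically on the unit sphere; monomials of a fixed degree are linearly independent), which you invoke implicitly but correctly. Note, however, that the paper itself contains no proof of this statement: Lemma \ref{lem7-3} is simply quoted as Lemma 5.1 of \cite{a2}, so there is no in-paper argument to compare against. Your argument is a clean, self-contained substitute in the same spirit as the one in \cite{a2}: there, uniformity in $k$ is obtained by a compactness-and-contradiction argument (if no uniform $(p,\sigma)$ existed, a sequence of unit vectors $c_n$ violating $(n,1/n)$-transversality would have a limit $c$ with $\langle c, f_F\rangle\equiv 0$, contradicting $0$-degeneracy), whereas you argue directly: the stabilization of the increasing chain $W_p=\mathrm{span}\{d_\alpha:|\alpha|\le p\}$ at $\mathbf{R}^d$ pins down a single finite $p$, and then continuity of $\Phi$ on the compact sphere of directions $k/|k|$ yields a single $\sigma>0$. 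The direct version is slightly more constructive (it exhibits $p$ as the stabilization order of the coefficient subspaces) and makes transparent that only finitely many Taylor coefficients of the possibly divergent formal series $f_F$ enter the statement, a point you were right to flag explicitly.
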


Assume that
\begin{equation}\label{fff7-1}
\|\eta(\xi)-\Sigma_{0\leq j\leq p}[f_{F}(\xi)]_j\|_{C^p(D(0,0,a,0))}\leq\frac{\sigma}{2}.
\end{equation}

\begin{lem}\label{lem7-4}{\rm (Lemma 5.2 in \cite{a2})} If $f_F(\mu)$ is $(p,\sigma)$-transverse (in some direction), then
$$Leb \left\{|\xi|<a: |\langle\frac{k}{|k|},\eta(\xi)\rangle|<\varepsilon\right\}\leq C(\frac{\varepsilon}{\sigma})^{\frac{1}{p}}a^{d-1}$$
for any $a,k,\varepsilon$.
\end{lem}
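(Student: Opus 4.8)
The plan is to derive the measure bound by combining a R\"ussmann--Pyartli sublevel-set estimate with the transversality supplied by Lemma \ref{lem7-3}. Write $e_k=k/|k|$ and $\psi_k(\xi):=\langle e_k,\eta(\xi)\rangle$, so the set to be measured is $\{|\xi|<a:|\psi_k(\xi)|<\varepsilon\}$. By Lemma \ref{lem7-2} the $p$-jet of $\eta$ at $\xi=0$ agrees with that of $f_F$, and by the $C^p$-estimate stated just before the lemma, $\|\eta-\Sigma_{0\le j\le p}[f_F]_j\|_{C^p(|\xi|<a)}\le Ca\le\frac{\sigma}{2}$. Hence $\psi_k$ differs in $C^p$-norm by at most $\sigma/2$ from the polynomial $P_k(\xi):=\langle e_k,\Sigma_{0\le j\le p}[f_F]_j\rangle$ of degree $\le p$, and Lemma \ref{lem7-3} gives $\max_{0\le j\le p}|\partial_t^jP_k(tu_k)|_{t=0}|\ge\sigma$, whence $\max_{0\le j\le p}|\partial_t^j\psi_k(tu_k)|_{t=0}|\ge\sigma/2$. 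First I would dispose of the order-zero case: if this maximum is attained at $j=0$, i.e. $|\langle e_k,\omega_0\rangle|\ge\sigma$, then $|\psi_k(\xi)|\ge\sigma-Ca\ge\sigma/2$ throughout $|\xi|<a$, so for $\varepsilon<\sigma/2$ the sublevel set is empty and the estimate is trivial. Thus the interesting case is when the transversality is carried by a derivative of order between $1$ and $p$.

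The engine I would use is the multidimensional sublevel-set lemma of R\"ussmann type: if $Q\in C^p(\{|\xi|<a\})$ satisfies $\max_{1\le|\alpha|\le p}|\partial^\alpha Q(\xi)|\ge\delta$ for every $\xi$ with $|\xi|<a$, then $\mathrm{Leb}\{|\xi|<a:|Q(\xi)|<\varepsilon\}\le C\,a^{d-1}(\varepsilon/\delta)^{1/p}$. Heuristically this is a Fubini statement: foliating the ball by lines in the locally dominant direction, the one-variable lemma (for $g\in C^p(I)$ with $\max_{1\le j\le p}|g^{(j)}(t)|\ge\delta$ on $I$ one has $\mathrm{Leb}\{|g|<\varepsilon\}\le C_p(\varepsilon/\delta)^{1/p}$) gives a slice contribution $(\varepsilon/\delta)^{1/p}$, while the $(d-1)$-dimensional cross-section of $\{|\xi|<a\}$ contributes the factor $a^{d-1}$. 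Applying this to $Q=\psi_k$ with $\delta\asymp\sigma$ yields exactly the asserted bound $C(\varepsilon/\sigma)^{1/p}a^{d-1}$.

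The step I expect to be the main obstacle is verifying the hypothesis of the engine uniformly, namely that $\max_{1\le|\alpha|\le p}|\partial^\alpha P_k(\xi)|\ge c\sigma$ for \emph{every} $\xi$ with $|\xi|<a$, since Lemma \ref{lem7-3} only provides transversality at the single base point $\xi=0$ along the single direction $u_k$. Transfer along the central line $\xi=tu_k$ is harmless: the $p$-jet of a degree-$\le p$ polynomial at a nearby $t$ is an invertible (Taylor-shift) image of its jet at $0$, so for $|t|<a$ one loses only a factor $C(p,a)$ close to $1$. The genuine difficulty is that, off the central line, the $u_k$-directional jet of $P_k$ at a point $w\perp u_k$ is \emph{not} controlled by its jet at the origin, and the fixed-direction transversality can fail on parallel lines. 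To overcome this I would not fix the direction $u_k$ away from the central line; instead I would allow the dominant direction to depend on the base point and invoke the $0$-degeneracy of the BNF, which states that no linear functional of $f_F$ vanishes identically. Running the compactness argument behind Lemma \ref{lem7-3} over all base points $\xi$ with $|\xi|<a$ and all directions, together with the homogeneity of the truncated Taylor polynomial, should produce the required uniform nondegeneracy $\max_{1\le|\alpha|\le p}|\partial^\alpha P_k(\xi)|\ge c\sigma$ on the whole small ball; keeping the constant $\sigma$ uniform in $k$ (so that, with $\varepsilon\asymp\gamma|k|^{-(\tau+1)}$, the subsequent summation over $k\in\mathbf{Z}^{d}/\{0\}$ converges and yields the full-density statement of part (i)) is the delicate point that will require the most care.
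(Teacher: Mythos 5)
A preliminary remark: this paper contains no proof of Lemma \ref{lem7-4} at all --- it is imported verbatim as Lemma 5.2 of \cite{a2} --- so your proposal has to be measured against the argument given there. Your skeleton coincides with that argument: approximate $\psi_k(\xi)=\langle k/|k|,\eta(\xi)\rangle$ in $C^p$ on $\{|\xi|<a\}$ by the degree-$\le p$ polynomial $P_k(\xi)=\langle k/|k|,\Sigma_{0\le j\le p}[f_F(\xi)]_j\rangle$ using the display preceding the lemma, transfer the $(p,\sigma)$-transversality supplied by Lemma \ref{lem7-3}, dispose of the order-zero case, and finish with a one-variable R\"ussmann--Pyartli sublevel estimate plus Fubini. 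The gap sits exactly at the step you single out as the main obstacle, and the repair you sketch there is not viable.

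The difficulty you describe --- that the fixed direction $u_k$ is transverse only at $\xi=0$ and ``can fail on parallel lines'' --- simply does not occur inside the ball $\{|\xi|<a\}$, and seeing this is the crux of the proof. Since $P_k$ is a polynomial of degree $\le p$ whose coefficients are $\langle k/|k|,c_\alpha\rangle$ with $c_\alpha$ the fixed Taylor coefficients of $f_F$ up to order $p$, they are bounded uniformly in $k$; hence each map $\xi\mapsto(u_k\cdot\nabla)^jP_k(\xi)$, $0\le j\le p$, has gradient bounded on the unit ball by a constant $C_0$ independent of $k$, and therefore $\max_{0\le j\le p}|(u_k\cdot\nabla)^jP_k(\xi)|\ge\sigma-C_0a$ for every $|\xi|<a$. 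The smallness $a\le\sigma/(4C_0)$ is already built into the setting (the display before the lemma forces $Ca\le\sigma/2$), so the \emph{same} direction $u_k$ is transverse, with constant comparable to $\sigma$, at \emph{every} point of the ball; adding the $C^p$-error $\sigma/2$ passes this to $\psi_k$. After that, ordinary Fubini along $u_k$ (cross-section of measure $Ca^{d-1}$, one-variable lemma on each segment) yields $C(\varepsilon/\sigma)^{1/p}a^{d-1}$; the parallel-line failure you worry about happens only at distance of order $\sigma/C_0$ from the origin, i.e.\ outside the domain of the lemma. Your alternative route is defective on three counts: (i) the multidimensional ``engine'' with a pointwise-varying dominant multi-index is asserted rather than proved, and its justification by ``foliating in the locally dominant direction'' is not a Fubini argument, because the foliation would have to change from point to point; (ii) invoking $0$-degeneracy imports a hypothesis the lemma does not have --- it assumes transversality of the single series $f_k$ only, and is stated so that it can be applied in exactly that generality; and (iii) ``running the compactness argument behind Lemma \ref{lem7-3} over all base points $|\xi|<a$'' is circular, since $a$ is chosen only after $p$ and $\sigma$ are fixed, so any transversality constant manufactured this way would depend on $a$ and hence on $\sigma$ itself.
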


If (\ref{fb2-100}) is 0-degenerate, we can assume without restriction that $\sigma\leq1$, let $\tau=dp+1$, $a=(\frac{\gamma}{\sigma})^{\frac{1}{2p}}$. Let $N>\max\left\{60(\ell+1)p,10d\right\}$. Then by (\ref{fc7-18}) we have that
$$\|\eta(\xi)-\Sigma_{0\leq j\leq p}[f_{F}(\xi)]_j\|_{C^p(D(0,0,a,0))}\leq Ca\leq\frac{\sigma}{2}$$
if $a$ is sufficiently small.
Then by    Lemma \ref{lem7-4}, we have that
$$Leb \left\{|\xi|<a: \eta(\xi)\not\in DC(\gamma,\tau)\right\}\leq C(\frac{\gamma}{\sigma})^{\frac{1}{p}}a^{d-1}\leq CaLeb \left\{|\xi|<a\right\},$$
provided $\gamma$ is sufficiently small. Hence, the set
$$\left\{|\xi|<a: \eta(\xi)\in DC(\gamma,\tau)\right\}$$
is of positive measure and density 1 at 0 as $\gamma\rightarrow0$. This completes the proof of conclusion (i) of Theorem \ref{thm1-1}.

Then we will prove conclusions (ii) and (iii) of Theorem \ref{thm1-1}. The proof is the same as Section 6 in \cite{a2}.  Let $\tau=\tau_0$, $\gamma=\frac{\gamma_0}{2}$. By Lemma \ref{lem7-2}, if $\eta(\xi)\in DC(\gamma,\tau)$, we have that
\begin{equation}\label{fb7-200}
\omega_0-\eta(\xi)+\Lambda(\xi,\eta(\xi))=0,
\end{equation}
\begin{equation}\label{fb7-201}
\eta(\xi)=f_F(\xi)+O^{\infty}(\xi).
\end{equation}
By $\omega_0=(\omega_{01},\omega_{02},\cdots,\omega_{0d})^{T}$, $\eta=(\eta_{1},\eta_{2},\cdots,\eta_{d})^{T}$, $\Lambda=(\Lambda_{1},\Lambda_{2},\cdots,\Lambda_{d})^{T}$, $f_F=((f_F)_{1},(f_F)_{2},\cdots,(f_F)_{d})^{T}$, (\ref{fb7-200}) and (\ref{fb7-201}) can be written as
\begin{equation}\label{fd7-1}
\begin{cases}
\omega_{01}-\eta_1(\xi)+\Lambda_1(\xi,\eta(\xi))=0,\\
\omega_{02}-\eta_2(\xi)+\Lambda_2(\xi,\eta(\xi))=0,\\
\cdots\\
\omega_{0d}-\eta_d(\xi)+\Lambda_d(\xi,\eta(\xi))=0,
\end{cases}
\end{equation}

\begin{equation}\label{fd7-2}
\begin{cases}
\eta_1(\xi)=(f_F)_1(\xi)+O^{\infty}(\xi),\\
\eta_2(\xi)=(f_F)_2(\xi)+O^{\infty}(\xi),\\
\cdots\\
\eta_d(\xi)=(f_F)_d(\xi)+O^{\infty}(\xi).
\end{cases}
\end{equation}
When (\ref{fb2-100}) is $j$-degenerate ($1\leq j\leq d-1$), let $\partial_v$ be the directional derivative in direction $v$, then we have
\begin{equation}\label{fb7-202}
\partial_v^n\langle f_{F}(\xi), v\rangle=\partial_v^n(\sum_{i=1}^d(f_{F})_i(\xi)\cdot v_i)=0, \ \ \ \ \forall n\geq0,
\end{equation}
for every $\xi=(\xi_1,\xi_2,\cdots,\xi_d)^{T}\backsim0\in \mathbf{R}^{d}$ and any $v=(v_1,v_2,\cdots,v_d)^{T}\in {\rm Lin}(\tilde{\gamma}=(\gamma_1,\cdots,\gamma_j))$.

For $\forall1\leq i\leq d$, by (\ref{fb1-100}) and (\ref{fb7-202}), we have that
\begin{equation}\label{fd7-3}
\partial_v(f_F)_i(\xi)=\sum_{l=1}^d\frac{\partial (f_{F})_i(\xi)}{\partial \xi_{l}}\cdot v_{l}=\sum_{l=1}^d\frac{\partial (f_{F})_l(\xi)}{\partial \xi_{i}}\cdot v_{l}=\frac{\partial(\sum_{l=1}^d (f_{F})_l(\xi)\cdot v_{l})}{\partial \xi_{i}}=0.
\end{equation}
Then we have
\begin{equation}\label{fd7-4}
\partial_v^n(f_F)_i(\xi)=0, \ \ \ \ \forall n\geq1.
\end{equation}
By (\ref{fd7-2}) and (\ref{fd7-4}), we have
\begin{equation}\label{fd7-5}
\partial_v^n\eta_i(\xi)_{|\xi=0}=0, \ \ \ \ \forall n\geq1.
\end{equation}
By (\ref{fd7-1}), we have
\begin{equation}\label{fd7-6}
\partial_v(\omega_{0i}-\eta_i(\xi)+\Lambda_i(\xi,\eta(\xi)))=0
\end{equation}
and
\begin{eqnarray}\label{fd7-7}
\nonumber &&\partial_v(\omega_{0i}-\eta_i(\xi)+\Lambda_i(\xi,\eta(\xi)))\\
\nonumber&=&-\partial_v\eta_i(\xi)+\sum_{l=1}^d\frac{\partial \Lambda_i}{\partial \xi_{l}}\cdot v_{l}+\sum_{l=1}^d(\sum_{p=1}^d\frac{\partial \Lambda_i}{\partial \eta_{p}}\cdot\frac{\partial \eta_p}{\partial \xi_{l}})\cdot v_{l}\\
\nonumber&=&-\partial_v\eta_i(\xi)+\sum_{l=1}^d\frac{\partial \Lambda_i}{\partial \xi_{l}}\cdot v_{l}+\sum_{p=1}^d\frac{\partial \Lambda_i}{\partial \eta_{p}}(\sum_{l=1}^d\frac{\partial \eta_p}{\partial \xi_{l}}\cdot v_{l})\\
&=&-\partial_v\eta_i(\xi)+\sum_{l=1}^d\frac{\partial \Lambda_i}{\partial \xi_{l}}\cdot v_{l}+\sum_{p=1}^d\frac{\partial \Lambda_i}{\partial \eta_{p}}\cdot\partial_v \eta_p.
\end{eqnarray}
By (\ref{fd7-5}), we have
\begin{equation}\label{fd7-8}
(-\partial_v\eta_i(\xi)+\sum_{p=1}^d\frac{\partial \Lambda_i}{\partial \eta_{p}}\cdot\partial_v \eta_p)_{|\xi=0}=0.
\end{equation}
By (\ref{fc6-1}), (\ref{fc6-3}) and (\ref{fb7-200}), we have
\begin{equation}\label{fd7-9}
\eta(0)=\omega_0
\end{equation}
and
\begin{equation}\label{fd7-10}
\Lambda_i(0,\omega_0)=0.
\end{equation}
Then by (\ref{fd7-6})-(\ref{fd7-9}), we have
\begin{equation}\label{fb7-203}
\partial_v(\Lambda_i(\cdot,\omega_0))_{|\xi=0}=0.
\end{equation}
Similarly, we have
\begin{equation}\label{fd7-11}
\partial_v^n(\Lambda_i(\cdot,\omega_0))_{|\xi=0}=0,\ \ \ \ \forall n\geq2.
\end{equation}
 By (\ref{fd7-10})-(\ref{fd7-11}), since $s\mapsto\Lambda(\langle s,\tilde{\gamma}\rangle,\omega_0)$ is an analytic function in $s\in \mathbf{R}^{j}$ , $s\backsim0$, it must be identically 0, and hence $\eta(\langle s,\tilde{\gamma}\rangle)$ is identically $\omega_0$; that is,
$$\eta(\langle s,\tilde{\gamma}\rangle)\in DC(\gamma,\tau)$$
for all sufficiently small $s$. It follows that for any $\xi=\langle s,\tilde{\gamma}\rangle\in {\rm Lin}(\tilde{\gamma}=(\gamma_1,\cdots,\gamma_j))$ with sufficiently small $s$, (\ref{fc1-3}) has an
analytic invariant torus with frequency $\omega_0$ and that the set of all these tori is a $(d+j)$-dimensional subvariety.
This completes the proof of conclusion (ii) of Theorem \ref{thm1-1}.
\begin{rem}\label{rem7-5} By the above process of the proof, (\ref{fb1-100}) is a sufficient but unnecessary condition for the conclusion (ii) of Theorem \ref{thm1-1} (we get (\ref{fd7-3}) and (\ref{fd7-4}) by (\ref{fb1-100})). There are many reversible systems which are $j$-degenerate ($1\leq j\leq d-1$) and do not satisfy condition (\ref{fb1-100}). For example, consider the following analytic $G$-reversible system of the form (\ref{fc1-3}):
$$\dot{x}_1=f_1(y)=\omega_{01}(1+y_1^2+y_2^2+\cdots+y_d^2),$$
$$\cdots$$
$$\dot{x}_{i}=f_{i}(y)=\omega_{0i}(1+y_1^2+y_2^2+\cdots+y_d^2),\ \ \ i=j+1,$$
$$\dot{x}_{i}=f_{i}(y)=\omega_{0i}+y_{j+2},\ \ \ \ i=j+2,$$
$$\cdots$$
$$\dot{x}_{d}=f_{d}(y)=\omega_{0d}+y_{d},$$
$$\dot{y}=0.$$
This system coincides with its BNF around $\mathbf{T}^{d}\times\left\{y=0 \right\}$ and is $j$-degenerate because the identity
$\Sigma_{i=1}^d\gamma_if_i(y)\equiv0$ ($\gamma\in \mathbf{R}^{d}$) is equivalent to that
$$\sum_{i=1}^{j+1}\gamma_i\omega_{0i}=0,\ \ \ \ \gamma_{j+2}=\cdots=\gamma_d=0.$$
The condition (\ref{fb1-100}) is not satisfied, for instance,
$$\frac{\partial f_1(y)}{\partial y_2}=2\omega_{01}y_2,\ \ \ \frac{\partial f_2(y)}{\partial y_1}=2\omega_{02}y_1.$$
Each torus $\mathbf{T}^{d}\times\left\{y=const \right\}$ is an invariant torus. Hence, for an analytic $G$-reversible system of the form (\ref{fc1-3}) which is $j$-degenerate ($1\leq j\leq d-1$) and does not satisfy condition (\ref{fb1-100}), maybe through $\Gamma_0$ of the reversible system there passes an analytic subvariety of dimension $d+j$ foliated into analytic invariant tori, even, a full neighborhood of $\Gamma_0$ may be foliated into analytic invariant tori. Unfortunately, for an analytic $G$-reversible system of the form (\ref{fc1-3}) which is $j$-degenerate ($1\leq j\leq d-1$), I can not prove  a universal conclusion without condition (\ref{fb1-100}).
\end{rem}

When (\ref{fb2-100}) is $d-1$-degenerate, then
$$f_F(\xi)=\mu(\langle\xi,\omega_0\rangle)\omega_0,$$
where $\mu(t)=1+O^1(t)$ is a formal power series in one variable.

Since
$$\omega_0-\mu(\langle\xi,\omega_0\rangle)\omega_0+\Lambda(\xi,\mu(\langle\xi,\omega_0\rangle)\omega_0)=O^{\infty}(\xi),$$
taking $\xi=t\omega_0$, we have (assuming that $\omega_0$ is a unit vector)
\begin{equation}\label{fc7-20}
\omega_0-\mu(t)\omega_0+\Lambda(t\omega_0,\mu(t)\omega_0)=0
\end{equation}
modulo a term in $O^{\infty}(t)$. Since, by (\ref{fc6-1})-(\ref{fc6-3}), the left-hand side is analytic
in $t\omega_0$ and $\mu$, we obtain from any of the equations (\ref{fc7-20}) that $\mu(t)$ is a convergent power series. Then
$$t\mapsto\omega_0-\mu(t)\omega_0+\Lambda(t\omega_0,\mu(t)\omega_0)$$
is analytic for $t\sim0$, and hence identically zero. We derive from this that
$$\eta(\xi)=\mu(\langle\xi,\omega_0\rangle)\omega_0;$$
that is,
$$\eta(\xi)\in DC(\gamma,\tau)$$
for all sufficiently small $\xi$. This completes the proof of conclusion (iii) of Theorem \ref{thm1-1}.
\bibliographystyle{abbrv} 
\bibliography{kam}
\vspace{6pt}

\end{document}